\newtheorem{thm}{Theorem}
\newtheorem{lemma}[thm]{Lemma}
\newtheorem{cor}[thm]{Corollary}
\newtheorem{prop}[thm]{Proposition}
\newtheorem{definition}[thm]{Definition}
\newtheorem{remark}[thm]{Remark}
\newcommand{\R}{\mathbb{R}}
\newcommand{\Rn}{\mathbb{R}^n}
\newcommand{\abs}[1]{\ensuremath{\vert #1 \vert}}
\newcommand{\Lpi}{\mathcal{L}_\infty^+}
\newcommand{\Lmi}{\mathcal{L}_\infty^-}
\newcommand{\Li}{\mathcal{L}_\infty}
\renewcommand{\ll}{\Lambda_\infty^\alpha}
\newcommand{\dd}{\partial}
\newcommand{\e}{\varepsilon}
\DeclareMathOperator*{\hr}{\Gamma}
\DeclareMathOperator*{\spt}{supp}
\DeclareMathOperator*{\dist}{dist}
\DeclareMathOperator*{\dive}{div}
\begin{document}
\title{FRACTIONAL EIGENVALUES}
\author{Erik Lindgren \qquad Peter Lindqvist}
\date{Department of Mathematical Sciences\\Norwegian University of Science and Technology\\NO-7491
  Trondheim, Norway}
\maketitle
\begin{abstract}\noindent 
 \textsf{We study the non-local eigenvalue problem
    $$
    2\, \int_{\Rn}\frac{|u(y)-u(x)|^{p-2}\bigl(u(y)-u(x)\bigr)
  } {|y-x|^{\alpha
      p}}\,dy +\lambda |u(x)|^{p-2}u(x)=0
    $$ 
    for large values of $p$ and derive the limit equation as $p\to\infty$. Its viscosity solutions have many interesting properties and the eigenvalues exhibit a strange behaviour.} 
  \end{abstract}
\noindent {\bf   AMS classification:} 35J60, 35P30, 35R11\\
\noindent {\bf Keywords:} eigenvalue, non-local equation, non-linear equation\\
\section{Introduction}

 The problem of
minimizing the fractional Rayleigh quotient 
\begin{equation}\label{rayq}
{\displaystyle \underset{\phi}{\inf}}\,\frac{\displaystyle \int_{\Rn}\!\!\int_{\Rn}\frac{|\phi(y)-\phi(x)|^p}{|y-x|^{\alpha p}}\,dx\,dy}{\displaystyle
\int_{\Rn}|\phi(x)|^p\,dx}
\end{equation} among all functions $\phi$
in the class $C_0^{\infty}(\Omega), \, \phi \not \equiv 0$ leads to an
interesting eigenvalue problem with the non-local Euler-Lagrange 
equation
$$
2\, \int_{\Rn}\frac{|u(y)-u(x)|^{p-2}\bigl(u(y)-u(x)\bigr)
  } {|y-x|^{\alpha
      p}}\,dy +\lambda |u(x)|^{p-2}u(x)=0
$$
in a bounded domain $\Omega$ in the $n$-dimensional
Euclidean space.
Here $ p \geq 2$ and $ n < \alpha p <n+p$. It is an essential feature
that the solutions may be multiplied by constant factors. We treat the
solutions in the viscosity sense and prove, among other things, that
positive viscosity solutions are unique (up to a
normalization) and that the first eigenvalue is isolated. For sign changing solutions we detect some strange
phenomena, caused by the influence of points far away appearing in the
domain of integration for the non-local operator. Indeed, it is as if
the nodal domains were interacting with each other. In the linear case
$p = 2$ the connexion to the more familiar \emph{fractional Laplacian}
is the principal value formula
$$ (-\Delta)^{(2 \alpha-n)/2}u\,(x) = -
C(n,\alpha)\,\textup{P.V.}\int_{\Rn}\frac{u(y)-u(x)}{|y-x|^{2\alpha}}\,dy$$
valid at least in the range $n<2\alpha<n+2$. The linear case  has
 been treated in \cite{Kwa12}, \cite{FL11} and \cite{ZRK07}. 

To the best of our knowledge, no advanced regularity theory is yet
available for $p \not = 2$. To assure continuity for eigenfunctions we
have, occasionally, assumed that $\alpha p$ is larger than what
appears to be necessary. This is of little importance here, because
our main interest is the asymptotic case $p= \infty$. Formally, one
has then to minimize the quotient
\begin{equation*}
\displaystyle\frac{\displaystyle 
  \left\|\frac{u(y)-u(x)}{|y-x|^{\alpha}}\right \|_{L^{\infty}(\Rn
    \times \Rn)}}{\displaystyle\|u\|_{L^{\infty}(\Rn)}} , \qquad 0<\alpha\leq 1,
\end{equation*}
among all admissible functions $u$. However, this minimization problem
has too many solutions. Therefore the proper limit equation is called
for. The equation takes the form
\begin{equation}
\label{limeqq}
{\displaystyle
  \max\left\{\mathcal{L}_{\infty}u\,(x),\,\mathcal{L}_{\infty}^{-}u\,(x)
    + \lambda u(x)\right\}\,=\,0}
\end{equation}
in $\Omega$. In this new equation $\lambda$ is a real parameter (the eigenvalue) and
\begin{align*}
\mathcal{L}_{\infty}u\,(x)&=
\underset{y\in\Rn}{\sup}\,\frac{u(y)-u(x)}{|y-x|^{\alpha}}  \,
\,+\,\,\underset{y\in\Rn}{\inf}\,\frac{u(y)-u(x)}{|y-x|^{\alpha}}\\
\mathcal{L}_{\infty}^{-}u\,(x)&= \underset{y\in\Rn}{\inf}\,\frac{u(y)-u(x)}{|y-x|^{\alpha}}.
\end{align*}
The solutions $u$, referred to as $\infty$-eigenfunctions, belonging to $C_0(\overline{\Omega})$, and regarded
as zero outside $\Omega$, have to be interpreted in the viscosity
sense, because the operator $\mathcal{L}_{\infty}u\,(x)$ is not
sufficiently smooth. It is remarkable that the parameter $\lambda$
behaves like a genuine eigenvalue. Indeed, a non-negative solution 
 exists if and only if $\lambda$
has the value:
$$\lambda \, = \, \frac{1}{\bigl(\underset{x \in
    \Omega}{\max}\dist(x,\R^n\setminus\Omega)\bigr)^{\alpha}}\quad\equiv\quad
\Lambda_{\infty}^{\alpha}.$$
Thus the radius $R$ of the largest inscribed ball in $\Omega$ is
decisive: $\Lambda_{\infty}^{\alpha} = R^{-\alpha}$.  
If $\alpha = 1$, the eigenvalue  $\Lambda_{\infty}^{1}
= \Lambda_{\infty}$ is, incidentally, the same as the one in the
differential equation
\begin{equation}
\label{globq}
\max \left\{ \Lambda_{\infty}- \frac{|\nabla u|}{u},\,\,
    \sum_{i,j}\frac{\partial u}{\partial x_{i}} \frac{\partial u}{\partial x_{j}}
 \frac{\partial^{2}  u}{\partial x_{i} \partial x_j}\right\}\,\, =
\,\, 0, 
\end{equation}
treated in \cite{JLM99}, but the equations are not equivalent. This
differential equation is related to finding 
$$\underset{u}{\min}\,\frac{\|\nabla
  u\|_{L^{\infty}(\Omega)}}{\|u\|_{L^{\infty}(\Omega)}}$$
among all $u \in W^{1,\infty}_0(\Omega),\, u\not \equiv 0$. It is the
limit of the Euler-Lagrange equations coming from the minimization of
the Rayleigh quotients 
\begin{equation}
\label{peter}
\displaystyle \frac{\displaystyle\int_{\Omega}|\nabla u(x)|^{p}\,dx}{\displaystyle\int_{\Omega}|
  u(x)|^{p}\,dx}
\end{equation}
as $p \to  \infty$. Therefore a comparison of the two problems is of
actual interest.

Let us return to the $\infty$-eigenvalue equation (\ref{limeqq}). A central part
of the domain $\Omega$, called the High Ridge, is important. With the notation
$\delta(x) = \dist(x,\Rn\setminus \Omega)$ and $R = \|\delta\|_{\infty}$,
 the set $\Gamma = \{x\in \Omega|\, \delta(x) = R\}$ is the High
 Ridge. We have discovered the remarkable representation formula
$$u(x) = \frac{\delta(x)^{\alpha}}{\delta(x)^{\alpha}+
  \rho(x)^{\alpha}},$$
where $\rho(x) = \dist(x,\Gamma)$. The formula is valid in every
domain and gives a first $\infty$-eigenfunction. If $\Gamma_{1} 
\subset \Gamma$ is an arbitrary non-empty closed subset, the same 
formula, but with $\rho(x)$ replaced by
$$\rho_{1}(x) =  \dist(x,\Gamma_{1}),$$
also yields an $\infty$-eigenfunction. Thus uniqueness is lost.  We do not know
whether all positive solutions of (\ref{limeqq}) are represented. ---No such
formula is known for the differential equation (\ref{globq}). To derive and verify the representation
formula we use the Dirichlet problem for the equation
$$\mathcal{L}_{\infty}u\,(x) = 0 \quad \text{in} \quad \Omega
\setminus \Gamma$$
with boundary values $0$ and $1$. This equation has been treated
in \cite{CLM11}.

We have included a brief account on the higher eigenvalues,
corresponding to sign changing solutions. In this case the
$\infty$-eigenvalue equation (\ref{limeqq})
has to be amended to include the open set $\{u<0\}$ and the nodal line
$\{u=0\}$, see equation \eqref{limeqhigh} on page \pageref{limeqhigh}. Strange phenomena occur. First, the nodal domains,
which are the connected components of the open sets $\{u > 0\}$ and $\{u
< 0\}$,
do not have the same \emph{first} $\infty$-eigenvalue, yet they all come
from the same higher $\infty$-eigenfunction. Second, the restriction
of a higher $\infty$-eigenfunction to one of its nodal domains (and
extended as zero) is not an $\infty$-eigenfunction for the nodal
domain in question. Even one-dimensional examples exhibit this, see
Section \ref{sec:1d}.

To this one may add that such a behaviour is totally
impossible for equations like 
$$\Delta u + \lambda u = 0,\quad
\dive(|\nabla u|^{p-2}\nabla u) + \lambda |u|^{p-2}u = 0,$$  and (\ref{globq}). It
is the non-local character of our equation that causes such phenomena.

Needless to say, there are many open problems with our fractional,
non-local, non-linear eigenvalue problem, both for finite exponents
$p$  and for $p= \infty$. For example, the simplicity of the first
$\infty$-eigenvalue $\Lambda_{\infty}^{\alpha}$ is valid only in the
special case when the High Ridge contains exactly one
point. Nonetheless, this does not yet exclude the possibility that the 
minimizers of the fractional Rayleigh quotient (\ref{peter}) can
converge to a unique function, as $p \to \infty$. It stands to reason
that the limit procedure $p \to \infty$ should produce the maximal
solution, the one with $\Gamma_{1} = \Gamma$.  But  the presently known situation for the ``local'' problem
\eqref{globq} is also incomplete; see however \cite{Yu07} and \cite{CPJ09} for some progress. The higher
eigenvalues are mysterious when $p \not = 2$: for none of the
equations mentioned is it known that the eigenvalues are countable!
This challenging problem about the spectrum is likely to be the most difficult open question in this connection.

\emph{Acknowledgement:} We thank Evgenia Malinnikova
for helping us to verify an inequality.

\section{Preliminaries and Notation}

To study the fractional Rayleigh  quotient \eqref{rayq} the so-called
fractional Sobolev spaces\footnote{These spaces are also known as
  Aronszajn, Gagliardo or Slobodeckij spaces} $W^{s,p}(\R^n)$ with $0<s<1$ are
expedient. If $1<p<\infty$, as usual, the norm is defined through
$$
\|u\|^p_{W^{s,p}(\R^n)}=\int_{\R^n}\!\!\int_{\R^n} \frac{|u(y)-u(x)|^{p}}{|y-x|^{ sp+n}} d x d y+\int_{\R^n}|u|^p d x.
$$
The space $W^{s,p}(D)$ for a bounded and open
 subset $D$ of $\R^n$ is defined similarly and, as usual $W^{s,p}_0(D)$ is defined as the closure of $C_0^\infty(D)$ with respect to the norm $\|\cdot \|_{W^{s,p}(D)}$.
The relation between $s$ and our $\alpha$ is $n+sp=\alpha p$.
In ``Hitchhiker's Guide to the Fractional Sobolev Spaces'' one can
find most of the useful properties, cf.  \cite{NPV11}. We list some of them below.
 
\begin{thm}[Sobolev-type inequality] Let $D\subset \R^n$ be bounded and open, $sp<n$ and $s\in (0,1)$. Then there is a constant $C$ such that
$$
\|u\|_{L^{p^*}(\R^n)}\leq C \left(\int_{\R^n}\!\!\int_{\R^n} \frac{|u(y)-u(x)|^{p}}{|y-x|^{ sp+n}} d x d y\right)^\frac1p, 
$$
for all $u\in W_0^{s,p}(D)$ and where $p^{*}=\frac{np}{n-sp}$.
\end{thm}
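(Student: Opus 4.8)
I would follow the standard route for fractional Sobolev embeddings (this is one of the inequalities recorded in \cite{NPV11}). First, by density of $C_0^\infty(D)$ in $W_0^{s,p}(D)$ and Fatou's lemma on the left-hand side, it suffices to prove the estimate for $u\in C_0^\infty(D)$, viewed as a function on $\Rn$ vanishing outside $D$. Set
$$g(y)=\left(\int_{\Rn}\frac{|u(y)-u(w)|^p}{|y-w|^{n+sp}}\,dw\right)^{1/p},$$
so that, by Tonelli, $\|g\|_{L^p(\Rn)}$ equals exactly the right-hand side of the asserted inequality. The target is the \emph{pointwise} bound $|u(x)|\le C\,(I_s g)(x)$, where $(I_s g)(x)=\int_{\Rn}g(y)\,|x-y|^{s-n}\,dy$ is the Riesz potential of order $s$; granting this, the theorem follows from the Hardy--Littlewood--Sobolev inequality $\|I_s g\|_{L^{p^*}(\Rn)}\le C\|g\|_{L^p(\Rn)}$, which is available precisely because $1<p<n/s$ and $\tfrac1{p^*}=\tfrac1p-\tfrac sn$.

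To get the pointwise bound I would write $u_{B_r(x)}=|B_r(x)|^{-1}\int_{B_r(x)}u$ for the mean value and establish the local Poincaré-type estimate
$$\bigl|u_{B_{r/2}(x)}-u_{B_r(x)}\bigr|\ \le\ C\,r^{s}\,\frac{1}{|B_r(x)|}\int_{B_r(x)}g(z)\,dz .$$
This follows from $|u_{B_{r/2}}-u_{B_r}|\le 2^n|B_r|^{-2}\int_{B_r}\!\int_{B_r}|u(z)-u(w)|\,dw\,dz$ by applying Hölder's inequality in $w$ to the factorization $|u(z)-u(w)|=\frac{|u(z)-u(w)|}{|z-w|^{(n+sp)/p}}\cdot|z-w|^{(n+sp)/p}$ and using $|z-w|\le 2r$ on $B_r(x)\times B_r(x)$: this produces the power $r^s$ and replaces the inner integral of $|u(z)-u(w)|^p|z-w|^{-(n+sp)}$ by $g(z)^p$. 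Since $u$ has compact support, $u_{B_r(x)}\to 0$ as $r\to\infty$, while $u_{B_r(x)}\to u(x)$ as $r\to 0$; telescoping over the dyadic radii $r_i=2^{-i}$, $i\in\mathbb Z$, then gives $|u(x)|\le C\sum_{i\in\mathbb Z} r_i^{s}\,|B_{r_i}(x)|^{-1}\int_{B_{r_i}(x)}g$. Regrouping the dyadic annuli, this last sum is comparable to $(I_s g)(x)$; the two tails of the series converge thanks to $s>0$ (small radii, $g$ being bounded for smooth $u$) and to $s-n/p<0$ together with $g\in L^p$ (large radii).

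The step I expect to be the main obstacle is exactly this bookkeeping: tracking the constants while converting $\sum_i r_i^{s}|B_{r_i}(x)|^{-1}\int_{B_{r_i}(x)}g$ into $(I_s g)(x)$, where the two competing exponents $s>0$ and $s-n/p<0$ have to be controlled simultaneously — this is where the hypotheses $0<s<1$ and $sp<n$ are genuinely used. The only honest external ingredient is the Hardy--Littlewood--Sobolev bound for $I_s$; since $p\ge 2$ in our setting one may instead bypass it by optimizing in $\rho$ in the elementary inequality $|u(x)|\le C\bigl(\rho^{s}\,Mg(x)+\rho^{s-n/p}\|g\|_{L^p}\bigr)$ (here $Mg$ is the Hardy--Littlewood maximal function of $g$; take $\rho^{n/p}=\|g\|_{L^p}/Mg(x)$), which yields $|u(x)|\le C\,(Mg(x))^{1-sp/n}\|g\|_{L^p}^{sp/n}$; raising to the power $p^*$ and using the algebraic identity $p^*(1-sp/n)=p$ together with the Hardy--Littlewood maximal theorem then finishes the proof. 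A secondary technical point, since no regularity of $\partial D$ is assumed, is to make the density reduction rigorous by working directly with the definition of $W_0^{s,p}(D)$ as the closure of $C_0^\infty(D)$ and passing to the limit via Fatou on the left and convergence of the Gagliardo energies on the right.
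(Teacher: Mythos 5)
Your proposal is mathematically sound, but note that the paper does not prove this statement at all: it simply quotes it as Theorem 6.10 of \cite{NPV11} (the ``Hitchhiker's Guide''), so any actual proof you supply is by definition a different route from the paper's. What you give is the classical potential-theoretic argument: reduce to $u\in C_0^\infty(D)$ by density and Fatou, introduce the localized Gagliardo function $g$, prove the fractional Poincar\'e estimate $|u_{B_{r/2}(x)}-u_{B_r(x)}|\le C r^{s}\,|B_r(x)|^{-1}\int_{B_r(x)}g$, telescope over dyadic radii to get $|u(x)|\le C\,(I_s g)(x)$, and conclude either by Hardy--Littlewood--Sobolev or by the maximal-function interpolation $|u(x)|\le C(Mg(x))^{1-sp/n}\|g\|_{L^p}^{sp/n}$ together with $p^*(1-sp/n)=p$. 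All the exponent bookkeeping you flag as the delicate point does check out (H\"older in $w$ gives the factor $r^{s+n}$, the dyadic regrouping uses $s-n<0$, and the split at radius $\rho$ uses $s>0$ and $sp<n$ exactly as you say), and the constant you obtain depends only on $n,s,p$, as required. Two small remarks: the maximal-function shortcut needs only $p>1$ (the Hardy--Littlewood maximal theorem), not $p\ge 2$, so your aside about $p\ge 2$ is an unnecessary restriction; and the proof actually given in \cite{NPV11} is a more elementary one (a level-set/covering argument in the style of Savin--Valdinoci that avoids both Riesz potentials and maximal functions), which buys validity down to $p=1$, whereas your route buys brevity and transparency by leaning on standard harmonic-analysis machinery. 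Either way, for the purposes of this paper the statement is an imported tool, and your argument is a correct self-contained substitute for the citation.
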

 This is Theorem 6.10 on page 49 in \cite{NPV11}. From this one can
 extract the following estimate.

\begin{thm}\label{omegaest} Let $\alpha p > n$. If $\Omega$ is a bounded domain in
  $\Rn$ there exists a constant $C(n,p,\alpha) >0 $ such that $$
C(n,p,\alpha)\, |\Omega|^{1- \frac{\alpha p}{n}} \int_{\Omega}|\phi|^p
\,dx \leq \int_{\Omega}\!\int_{\Omega}\frac{|\phi(y)-\phi(x)|^p}{|y-x|^{\alpha
      p}}\,dx\,dy $$
for all $\phi \in C_{0}^{\infty}(\Omega)$.
\end{thm}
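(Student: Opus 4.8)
The plan is to derive Theorem~\ref{omegaest} from the preceding Sobolev-type inequality by a single application of H\"older's inequality, exploiting that $p^{*} > p$ when $sp < n$. First I would translate the hypotheses: with $n + sp = \alpha p$ we have $sp = \alpha p - n > 0$ by assumption, and $sp < n$ is exactly $\alpha p < 2n$; one should note that the statement as phrased is most naturally read with an implicit restriction $n < \alpha p < 2n$, or else one observes that for $\phi \in C_0^{\infty}(\Omega)$ the double integral over $\Omega \times \Omega$ is bounded below by the (smaller) restriction of the full $\R^n \times \R^n$ Gagliardo seminorm, so the Sobolev inequality still applies to $\phi$ extended by zero. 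Thus I may assume the right-hand side of the claimed inequality controls the quantity $\bigl(\int_{\R^n}\int_{\R^n} |\phi(y)-\phi(x)|^p / |y-x|^{\alpha p}\,dx\,dy\bigr)^{1/p}$ appearing on the right of the Sobolev-type theorem (with $sp + n = \alpha p$).

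Next I would invoke the Sobolev-type inequality to get $\|\phi\|_{L^{p^{*}}(\R^n)} \le C \bigl(\,[\phi]_{W^{s,p}}\,\bigr)$, and then estimate $\|\phi\|_{L^p(\Omega)}$ in terms of $\|\phi\|_{L^{p^{*}}(\Omega)}$ by H\"older's inequality on the finite-measure set $\Omega$: since $p < p^{*}$,
\begin{equation*}
\int_{\Omega} |\phi|^p \, dx \;\le\; \|\phi\|_{L^{p^{*}}(\Omega)}^p \, |\Omega|^{1 - p/p^{*}}.
\end{equation*}
Combining the two displays gives
\begin{equation*}
\int_{\Omega} |\phi|^p \, dx \;\le\; C^p \, |\Omega|^{1 - p/p^{*}} \int_{\R^n}\!\!\int_{\R^n} \frac{|\phi(y)-\phi(x)|^p}{|y-x|^{\alpha p}}\, dx\, dy,
\end{equation*}
and since $\phi \in C_0^{\infty}(\Omega)$ vanishes outside $\Omega$, the outer double integral can be taken over $\Omega \times \Omega$ (the complementary contributions are nonnegative and in fact controlled, so dropping them only decreases the right-hand side — one should check this direction carefully, as it is the one place where the inequality could go the wrong way).

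The last computational step is to identify the exponent: with $p^{*} = np/(n - sp)$ and $sp = \alpha p - n$, one computes $1 - p/p^{*} = 1 - (n - sp)/n = sp/n = (\alpha p - n)/n = \alpha p / n - 1 = -(1 - \alpha p/n)$. Hence $|\Omega|^{1 - p/p^{*}} = |\Omega|^{\alpha p/n - 1}$, and moving it to the other side yields precisely the factor $|\Omega|^{1 - \alpha p/n}$ multiplying $\int_{\Omega}|\phi|^p$ in the statement, with $C(n,p,\alpha) = C^{-p}$. The main obstacle — really the only subtle point — is the passage between the Gagliardo seminorm over $\R^n \times \R^n$ (needed to apply the cited Sobolev inequality to the zero-extension of $\phi$) and the seminorm over $\Omega \times \Omega$ (which appears in the conclusion); one must verify that restricting the domain of integration does not break the inequality, i.e. that $\int_{\Omega}\int_{\Omega} \le \int_{\R^n}\int_{\R^n}$, which is immediate since the integrand is nonnegative. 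Everything else is bookkeeping with H\"older and the exponent algebra.
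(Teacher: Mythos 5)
Your H\"older step and the exponent algebra are correct, and what they prove is exactly inequality (\ref{sobomes}), i.e.\ the estimate with the Gagliardo seminorm taken over $\R^n\times\R^n$ on the right (the paper itself gives no more detail than ``from this one can extract the following estimate'', so up to that point you are on the intended route). The genuine gap is your last step, the passage from $\int_{\R^n}\!\int_{\R^n}$ to $\int_{\Omega}\!\int_{\Omega}$. What you have shown is $|\Omega|^{1-\alpha p/n}\int_\Omega|\phi|^p dx\le C^pA$ with $A=B+2\int_{\R^n\setminus\Omega}\!\int_\Omega|\phi(x)|^p|y-x|^{-\alpha p}dx\,dy$, where $B$ is the $\Omega\times\Omega$ integral in the statement; Theorem \ref{omegaest} asserts a bound by $B$ alone. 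The monotonicity you invoke, $B\le A$, points the wrong way: dropping the nonnegative complement contribution makes the \emph{right-hand side} smaller, so the desired inequality does not follow. What you would actually need is the reverse control, namely that the complement term (a fractional Hardy-type quantity involving $\dist(x,\R^n\setminus\Omega)^{n-\alpha p}$) is dominated by $B$ with a constant depending only on $n,p,\alpha$, and this is not a formality. Indeed, for $n<\alpha p<n+1$ no such bound can exist: take $\Omega$ a ball and $\phi_\e\in C_0^\infty(\Omega)$ with $0\le\phi_\e\le1$, $\phi_\e=1$ at distance $\ge\e$ from $\partial\Omega$ and $|\nabla\phi_\e|\le C/\e$; a boundary-layer computation gives $\int_\Omega\!\int_\Omega|\phi_\e(y)-\phi_\e(x)|^p|y-x|^{-\alpha p}dx\,dy\lesssim\e^{\,1-(\alpha p-n)}\to0$ while $\int_\Omega|\phi_\e|^pdx\to|\Omega|$ (this is consistent with the density of $C_0^\infty(\Omega)$ in $W^{s,p}(\Omega)$ when $sp<1$). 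So the $\R^n\times\R^n\to\Omega\times\Omega$ reduction is the entire content of the theorem beyond (\ref{sobomes}), it is exactly the step you declared ``immediate'', and your proposal does not supply it.

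A second, smaller issue is the exponent range. The Sobolev-type inequality you cite requires $sp<n$, i.e.\ $n<\alpha p<2n$, whereas the statement assumes only $\alpha p>n$ and the paper elsewhere works with $\alpha p>2n$, where $p^{*}=np/(n-sp)$ does not even exist. Your parenthetical attempt to dispose of this again appeals only to monotonicity of the domain of integration and says nothing about the exponent range; covering $\alpha p\ge2n$ would require a different ingredient (for instance the H\"older embedding of Theorem \ref{ca}, or a direct elementary argument). As written, your argument establishes (\ref{sobomes}) for $n<\alpha p<2n$ and nothing more.
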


The right-hand side is the so-called Gagliardo seminorm raised to the
$p^{\text{th}}$  power.

\begin{thm}[H\"older embedding]\label{ca} Let $D\subset \R^n$ be
  bounded and open, $sp>n$ and $s\in (0,1)$. Then there is a constant
  $C$ such that for all $u\in W_0^{s,p}(D)$
$$
\|u\|_{C^{0,\beta}(\R^n)}\leq C\|u\|_{W^{s,p}(\R^n)}, 
$$
 where $\beta=(sp-n)/p$.
\end{thm}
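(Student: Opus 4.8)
The plan is to prove the classical Morrey/Campanato-type estimate, adapted to the fractional setting, by a telescoping argument over dyadic balls centered at Lebesgue points. Throughout, write
\[
[u]^p \;=\; \int_{\Rn}\!\!\int_{\Rn}\frac{|u(y)-u(x)|^p}{|y-x|^{sp+n}}\,dx\,dy
\]
for the $p$-th power of the Gagliardo seminorm, and for a ball $B=B_r(z)$ let $u_B=\tfrac{1}{|B|}\int_B u\,dx$. The key preliminary estimate I would establish is the mean oscillation bound: for every ball $B_r(z)$,
\[
\frac{1}{|B_r(z)|}\int_{B_r(z)}\bigl|u(x)-u_{B_r(z)}\bigr|\,dx \;\le\; C\, r^{\beta}\Bigl(\int_{B_r(z)}\!\!\int_{B_r(z)}\frac{|u(y)-u(x)|^p}{|y-x|^{sp+n}}\,dx\,dy\Bigr)^{1/p},\qquad \beta=\tfrac{sp-n}{p}.
\]
This follows from $|u(x)-u_{B_r}|\le\tfrac1{|B_r|}\int_{B_r}|u(x)-u(y)|\,dy$, inserting the weight $|x-y|^{(sp+n)/p}$ and applying H\"older's inequality (exponents $p$ and $p/(p-1)$) in the $(x,y)$ variables over $B_r\times B_r$; since $|x-y|\le 2r$ there, the auxiliary integral of $|x-y|^{(sp+n)/(p-1)}$ is bounded by $C r^{(sp+n)/(p-1)+2n}$, and collecting the powers of $r$ together with the factor $|B_r|^{-1}\sim r^{-n}$ gives exactly the exponent $\beta$.

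Next I would telescope. For $\rho>0$ and $B_\rho(x)\supset B_{\rho/2}(x)$ one has, using the above bound on $B_\rho$,
\[
\bigl|u_{B_{\rho/2}(x)}-u_{B_{\rho}(x)}\bigr|\;\le\;\frac{1}{|B_{\rho/2}(x)|}\int_{B_{\rho/2}(x)}|u-u_{B_\rho(x)}|\;\le\;\frac{2^n}{|B_\rho(x)|}\int_{B_\rho(x)}|u-u_{B_\rho(x)}|\;\le\;C\rho^{\beta}[u].
\]
Summing this over the dyadic scales $\rho=r2^{-k}$, $k\ge 0$, and using $\beta>0$ (which is where the hypothesis $sp>n$ enters) so that the geometric series $\sum 2^{-k\beta}$ converges, we get at every Lebesgue point $x$ and every $r>0$ that $|u(x)-u_{B_r(x)}|\le Cr^{\beta}[u]$, in particular $u_{B_r(x)}\to u(x)$. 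For two Lebesgue points $x,y$ with $r=|x-y|$, set $B=B_{2r}(x)\supset B_r(x)\cup B_r(y)$ and split
\[
|u(x)-u(y)|\le |u(x)-u_{B_r(x)}|+|u_{B_r(x)}-u_B|+|u_B-u_{B_r(y)}|+|u_{B_r(y)}-u(y)|;
\]
the outer terms are $\le Cr^\beta[u]$ by the telescoping bound, and the inner terms are $\le Cr^\beta[u]$ by the mean oscillation estimate applied on $B$ (since $|B_r(x)|=2^{-n}|B|$). Hence $|u(x)-u(y)|\le C|x-y|^\beta[u]$; this makes $u$ (on its Lebesgue set) $\beta$-H\"older, so it has a unique representative in $C^{0,\beta}(\Rn)$ with $[u]_{C^{0,\beta}}\le C[u]\le C\|u\|_{W^{s,p}(\Rn)}$.

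It remains to bound $\|u\|_{L^\infty}$, and this is the one place that genuinely uses boundedness of $D$. Since $u\in W_0^{s,p}(D)$ vanishes a.e. outside $D$ and now has a continuous representative, $u\equiv 0$ on $\Rn\setminus\overline D$, so $\|u\|_{L^\infty(\Rn)}=\sup_{\overline D}|u|$. Fix a Lebesgue point $x\in\overline D$ and choose $r_0\sim\diam D$ with $D\subset B_{r_0}(x)$; then
\[
|u(x)|\le |u(x)-u_{B_{r_0}(x)}|+\bigl|u_{B_{r_0}(x)}\bigr|\le Cr_0^{\beta}[u]+\frac{1}{|B_{r_0}(x)|}\Bigl|\int_D u\,dx\Bigr|\le Cr_0^{\beta}[u]+\frac{|D|^{1-1/p}}{|B_{r_0}|}\,\|u\|_{L^p(\Rn)},
\]
and both coefficients are finite constants depending only on $n$, $p$, $s$ and $\diam D$ (here the large average is killed precisely because $|B_{r_0}|\sim(\diam D)^n<\infty$). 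Adding the two bounds gives $\|u\|_{C^{0,\beta}(\Rn)}\le C\|u\|_{W^{s,p}(\Rn)}$. The main obstacle is the first step — the fractional mean oscillation estimate with the sharp power $r^\beta$; once the H\"older exponents and the scaling are accounted for, the rest is standard telescoping plus the boundedness of $D$ for the sup bound.
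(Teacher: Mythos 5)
Your proposal is correct. Note that the paper does not prove this statement at all: it simply quotes it as Theorem 8.2 of the ``Hitchhiker's Guide'' \cite{NPV11}, so your self-contained argument fills in what the paper delegates to the literature, and it does so by the standard Morrey--Campanato route (which is essentially also how the cited reference proceeds): the localized mean-oscillation bound with the sharp power $r^{\beta}$, $\beta=(sp-n)/p$, then dyadic telescoping at Lebesgue points (this is where $sp>n$, i.e.\ $\beta>0$, is used), then the four-term split over $B_{2|x-y|}(x)$ to get the H\"older seminorm, and finally the $L^{\infty}$ bound from one large ball containing $D$, which is the only place boundedness of $D$ enters and which makes the constant depend on $\diam D$ (consistent with the theorem, where $C$ may depend on $D$). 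One small bookkeeping remark: in the mean-oscillation step the prefactor is $|B_r|^{-2}\sim r^{-2n}$ (one factor from the outer average, one from $u_{B_r}$), not the single $|B_r|^{-1}$ you mention in words; with $r^{-2n}$ the exponent count $-2n+(sp+n)/p+2n(p-1)/p=\beta$ comes out exactly as in your displayed inequality, so the estimate you state is the right one and the rest of the argument is unaffected. Also, the reduction $u\equiv 0$ a.e.\ outside $D$ is legitimate here because the paper's convention extends $W_0^{s,p}(D)$-functions by zero to all of $\R^n$.
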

This is Theorem 8.2 on page 38 in \cite{NPV11} and here
$$
\|u\|_{C^{0,\alpha}(D)}=|[u]|_{\alpha,D}+\|u\|_{L^\infty(D)},
$$
where we use the notation
$$
|[u]|_{\alpha,D}=\left\|\frac{u(x)-u(y)}{|x-y|^\alpha}\right\|_{L^\infty(D\times D)},\qquad |[u]|_\alpha=|[u]|_{\alpha,\R^n}.
$$

\begin{thm}[Compact embedding]\label{cpt} Assume $D\subset \R^n$ to be bounded and open, $p\in [1,\infty)$ and $s\in (0,1)$. Let $u_i$ be a sequence of functions in $W_0^{s,p}(D)$ such that 
$$
 \|u_i\|_{W^{s,p}(\R^n)}\leq M<\infty.
$$
Then there is a subsequence of $u_i$ converging in $L^q(\R^n)$ for all $q\in [1,p]$.
\end{thm}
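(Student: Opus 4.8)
The plan is to reduce the statement to the classical Kolmogorov--Riesz--Fr\'echet compactness criterion in $L^p(\R^n)$: a family $\mathcal{F}\subset L^p(\R^n)$, $1\le p<\infty$, is relatively compact as soon as it is (i) bounded in $L^p$, (ii) uniformly tight at infinity, and (iii) equicontinuous in the mean, meaning that
$$
\sup_{u\in\mathcal{F}}\|u-u_\rho\|_{L^p(\R^n)}\;\longrightarrow\;0\qquad\text{as }\rho\to 0,
$$
where $u_\rho(x)=|B_\rho(x)|^{-1}\int_{B_\rho(x)}u(z)\dif z$ is the average of $u$ over the ball $B_\rho(x)$. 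For our sequence $\{u_i\}$ the first two requirements come for free: $\|u_i\|_{L^p(\R^n)}\le\|u_i\|_{W^{s,p}(\R^n)}\le M$, and each $u_i$ vanishes outside $D$ (it belongs to $W_0^{s,p}(D)$, extended by zero), so there is no $L^p$-mass outside the fixed bounded set $\overline D$ at all. Thus everything rests on (iii), and no relation between $sp$ and $n$ will be needed.

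The one computation that matters is the verification of (iii). Fix $u=u_i$. By Jensen's inequality, for each $x$,
$$
|u(x)-u_\rho(x)|^p=\Bigl|\,|B_\rho(x)|^{-1}\!\int_{B_\rho(x)}\!\bigl(u(x)-u(z)\bigr)\dif z\,\Bigr|^p\le|B_\rho(x)|^{-1}\!\int_{B_\rho(x)}\!|u(x)-u(z)|^p\dif z.
$$
Since $|z-x|<\rho$ on $B_\rho(x)$, we have $|z-x|^{-(sp+n)}>\rho^{-(sp+n)}$ there, so, integrating in $x$ and writing $\omega_n=|B_1|$,
\begin{align*}
\int_{\R^n}|u-u_\rho|^p\dif x
&\le\frac{1}{\omega_n\rho^n}\int_{\R^n}\!\!\int_{B_\rho(x)}|u(x)-u(z)|^p\dif z\dif x\\
&\le\frac{\rho^{sp+n}}{\omega_n\rho^n}\int_{\R^n}\!\!\int_{\R^n}\frac{|u(x)-u(z)|^p}{|z-x|^{sp+n}}\dif z\dif x\\
&=\frac{\rho^{sp}}{\omega_n}\,[u]^p_{W^{s,p}(\R^n)}\;\le\;\frac{\rho^{sp}}{\omega_n}\,M^p .
\end{align*}
Hence $\|u_i-(u_i)_\rho\|_{L^p(\R^n)}\le\omega_n^{-1/p}\rho^{s}M$ for all $i$, which tends to $0$ as $\rho\to 0$ uniformly in $i$; that is exactly (iii). (For a more hands-on conclusion when $p>1$: for fixed $\rho$ the averaged functions $(u_i)_\rho$ are uniformly bounded, supported in a fixed bounded set, and uniformly equicontinuous --- because $(u_i)_\rho(x)-(u_i)_\rho(x')$ is dominated by $\|u_i\|_{L^p}$ times the $L^{p'}$-norm of $\mathbf{1}_{B_\rho(x)}-\mathbf{1}_{B_\rho(x')}$ --- so Arzel\`a--Ascoli, combined with a diagonal argument over $\rho=1/k$ and the displayed estimate, yields a subsequence that is Cauchy, hence convergent, in $L^p(\R^n)$.)

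Finally, the passage from $q=p$ to every $q\in[1,p]$ is routine: the convergent subsequence and its limit are all supported in the fixed bounded set $\overline D$, so H\"older's inequality gives $\|v\|_{L^q(\R^n)}\le|D|^{\frac1q-\frac1p}\|v\|_{L^p(\R^n)}$, and $L^p$-convergence therefore implies $L^q$-convergence for all such $q$. There is no genuine obstacle here; the only step that calls for a little care is the mean-equicontinuity estimate above --- note that it invokes only the Gagliardo seminorm, not the full $W^{s,p}$-norm --- together with the observation that the uniform compact support of the $u_i$ is what makes both the tightness and the step down from $L^p$ to $L^q$ automatic.
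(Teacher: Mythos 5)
Your argument is correct. Note that the paper itself does not prove this theorem; it is quoted with a citation to Theorem 7.1 of \cite{NPV11}, whose proof establishes total boundedness in $L^p$ directly by covering the (fixed, bounded) support with small cubes and comparing $u$ with its piecewise-constant cube averages, the error being controlled by exactly the Jensen-plus-kernel-lower-bound computation you perform for ball averages: $\|u-u_\rho\|_{L^p}\le C\rho^{s}[u]_{W^{s,p}}$. So your key estimate coincides with the one in the cited source; the difference is only in the packaging, since you feed it into the Kolmogorov--Riesz--Fr\'echet criterion instead of verifying total boundedness by hand. One small point worth making explicit: you invoke the criterion in its ``averaged'' form, whereas the standard statement is in terms of translations; the reduction is immediate, since $u_\rho=u*\chi_\rho$ with $\chi_\rho=|B_\rho|^{-1}\mathbf{1}_{B_\rho(0)}$, and hence $\|\tau_hu-u\|_{L^p}\le 2\|u-u_\rho\|_{L^p}+\|u\|_{L^p}\|\tau_h\chi_\rho-\chi_\rho\|_{L^1}$, which is uniformly small by your estimate and continuity of translation in $L^1$ for the fixed kernel $\chi_\rho$; this works for every $p\in[1,\infty)$, so your parenthetical Arzel\`a--Ascoli route (which as written needs $p>1$) is not needed. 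The final step down from $L^p$ to $L^q$, $q\in[1,p]$, via H\"older on the common bounded support is exactly as in the reference.
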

This result can be found in Theorem 7.1 on page 33 in \cite{NPV11}.

It is worth mentioning that asymptotically, as $s\to 1$, the space $W^{s,p}$ becomes $W^{1,p}$, see \cite{BBM02}. The same also holds for the corresponding Euler-Lagrange equation, see \cite{IN10}.

 A function $u\in C_0(\overline{\Omega})$ or $u\in W_0^{s,p}(\Omega)$ is always assumed be defined in the whole $\R^n$ by extending it by zero.

\section{The Euler-Lagrange Equation}

Let $\Omega$ be a bounded domain in $\Rn$. We consider the problem of
minimizing the fractional Rayleigh quotient among all functions $\phi$
in the class $C_0^{\infty}(\Omega), \, \phi \not \equiv 0:$
\begin{equation}\label{ray}
{\displaystyle \underset{\phi}{\inf}}\,\frac{\displaystyle \int_{\Rn}\!\!\int_{\Rn}\frac{|\phi(y)-\phi(x)|^p}{|y-x|^{\alpha p}}\,dx\,dy}{\displaystyle
\int_{\Rn}|\phi(x)|^p\,dx}\quad =\quad \lambda_1.
\end{equation}
It is desirable that $$n < \alpha p < n+p,$$ but we will often require
the narrower bound  $$n < \alpha p < n+p -1.$$ Occasionally, we take
$\alpha p > 2n$ (instead of $>n$) to guarantee regularity. We 
aim at studying
the asymptotic case $p \to \infty.$ For $p$ large enough, any exponent
$0 < \alpha \leq 1$ is sooner or later included. The usual fractional
Sobolev space $W^{s,p}$ has the exponent $n+sp$ in the place of our
$\alpha$, i.e. $$s = \alpha - \frac{n}{p}, \quad 0 < s < 1.$$ For us $\alpha$
is more convenient. It is helpful to keep in mind that in the range
$\alpha p > n$ one has
$$\underset{|y-x|>1}{\int\!\!\int}\frac{dx\,dy}{|y-x|^{\alpha p}} <
\infty, \qquad
\underset{|y-x|<1}{\int\!\!\int}\frac{dx\,dy}{|y-x|^{\alpha p}} =
\infty.$$
The inequality
\begin{equation}
\label{sobomes}
C(n,p,\alpha)\, |\Omega|^{1- \frac{\alpha p}{n}} \int_{\Omega}|\phi|^p
\,dx \leq \int_{\Rn}\!\!\int_{\Rn}\frac{|\phi(y)-\phi(x)|^p}{|y-x|^{\alpha
      p}}\,dx\,dy
\end{equation}
shows that the infimum $\lambda_1 > 0.$ We call  $\lambda_1$ \emph{the
  first eigenvalue}\footnote{The name ``principal frequency'' is
  synonymous.}. It is worth noting that, although $\phi = 0$ in the
whole complement $\Rn \setminus \Omega,$ the identity
\begin{gather*}
\int_{\Rn}\!\!\int_{\Rn}\frac{|\phi(y)-\phi(x)|^p}{|y-x|^{\alpha
      p}}\,dx\,dy\\ = \int_{\Omega}\!\int_{\Omega}\frac{|\phi(y)-\phi(x)|^p}{|y-x|^{\alpha
      p}}\,dx\,dy + 2\,\int_{\Rn \setminus \Omega}dy \!\!\int_{\Omega}\frac{|\phi(x)|^p}{|y-x|^{\alpha
      p}}\,dx
\end{gather*}
has a term from the complement. However, the inequality
(\ref{sobomes}) is valid also with $\Omega \times \Omega$ as the
domain of integration in the double integral, see Theorem 2. But the minimization problem is not quite the same
if $\Rn \times \Rn$ is replaced by $\Omega \times \Omega$ in the
integral. Our choice has the advantage that
the property
$$\lambda_1(\Omega) \leq \lambda_1(\Upsilon), \quad \text{if}\quad \Upsilon
  \subset \Omega$$
is evident for subdomains. A simple change of coordinates yields that
$$\lambda_1(\Omega) = k^{\alpha p -n} \lambda_1( k \Omega), \qquad k >
0.$$
This and (\ref{sobomes}) indicate that \emph{small domains have large first
eigenvalues.}

A minimizer of the fractional Rayleigh quotient \eqref{ray} cannot change sign, since 
$$
|\phi(y)-\phi(x)|>\big||\phi(y)|-|\phi(x)|\big|\quad \text{when}\quad
 \phi(y)\phi(x)<0.$$ The minimizer in the next theorem is called \emph{the first
  eigenfunction} in $\Omega.$

\begin{thm} There exists a non-negative minimizer $u \in
  W_{0}^{s,p}(\Omega), \, u \not \equiv 0,$ and $u = 0$ in $\Rn
  \setminus \Omega.$ It satisfies the Euler-Lagrange equation
\begin{equation}
\label{euler}
\int_{\Rn}\!\!\int_{\Rn}\frac{|u(y)-u(x)|^{p-2}\bigl(u(y)-u(x)\bigr)
  \bigl(\phi(y)-\phi(x)\bigr)} {|y-x|^{\alpha
      p}}\,dx\,dy = \lambda \int_{\Rn}|u|^{p-2}u\phi\,dx
\end{equation}
with $\lambda = \lambda_1$ whenever $\phi \in C_0^{\infty}(\Omega).$
If $\alpha p > 2 n,$ the minimizer is in $C^{0,\beta}(\Rn)$ with
$\beta = \alpha - 2n/p.$
\end{thm}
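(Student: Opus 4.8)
\emph{Proof strategy.} The plan is to establish the three assertions in turn: existence of a non-negative minimizer by the direct method of the calculus of variations, the Euler--Lagrange equation by taking the first variation, and the H\"older continuity from the fractional Sobolev embedding already quoted. For existence I would start from a minimizing sequence $\phi_j\in C_0^\infty(\Omega)$ for \eqref{ray}, normalized so that $\int_{\Rn}|\phi_j|^p\,dx=1$; then the Gagliardo seminorms $\int_{\Rn}\!\!\int_{\Rn}|\phi_j(y)-\phi_j(x)|^p|y-x|^{-\alpha p}\,dx\,dy$ stay bounded, so $\|\phi_j\|_{W^{s,p}(\Rn)}$ is bounded. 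Since $W^{s,p}(\Rn)$ is reflexive for $1<p<\infty$, a subsequence converges weakly to some $u$, and since $W_0^{s,p}(\Omega)$ is a closed, hence weakly closed, subspace, the limit satisfies $u\in W_0^{s,p}(\Omega)$, in particular $u=0$ on $\Rn\setminus\Omega$. By the compact embedding (Theorem \ref{cpt}) a further subsequence converges to $u$ in $L^p(\Rn)$, whence $\int_{\Rn}|u|^p\,dx=1$ and $u\not\equiv0$; passing to an a.e.\ convergent subsequence, Fatou's lemma on $\Rn\times\Rn$ shows the double integral is lower semicontinuous under such convergence, so the Rayleigh quotient of $u$ is at most $\lambda_1$, and since $u\in W_0^{s,p}(\Omega)$ is admissible (the infimum in \eqref{ray} is unchanged over all of $W_0^{s,p}(\Omega)$ by density) it equals $\lambda_1$; thus $u$ is a minimizer. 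Finally, the pointwise inequality noted just before the theorem shows that $|u|$ is again a minimizer, so I may replace $u$ by $|u|$ and assume $u\ge0$.

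For the Euler--Lagrange equation I would fix $\phi\in C_0^\infty(\Omega)$ and consider
\[
i(t)\;=\;\int_{\Rn}\!\!\int_{\Rn}\frac{|(u+t\phi)(y)-(u+t\phi)(x)|^p}{|y-x|^{\alpha p}}\,dx\,dy\;-\;\lambda_1\int_{\Rn}|u+t\phi|^p\,dx,\qquad t\in\R.
\]
Since $u+t\phi\in W_0^{s,p}(\Omega)$ and $\lambda_1$ is the minimal value of the Rayleigh quotient, one has $i(t)\ge 0=i(0)$ for every $t$, so $i'(0)=0$ as soon as $i$ is differentiable at $0$. To secure that, write $a=|u(y)-u(x)|/|y-x|^\alpha$ and $b=|\phi(y)-\phi(x)|/|y-x|^\alpha$ and split $|y-x|^{-\alpha p}=|y-x|^{-\alpha(p-1)}|y-x|^{-\alpha}$; by the mean value theorem the difference quotients of the first integrand are bounded, for $|t|\le1$, by $p(a+b)^{p-1}b$, and H\"older's inequality gives $\int_{\Rn}\!\!\int_{\Rn}(a+b)^{p-1}b\,dx\,dy\le\|a+b\|_{L^p(\Rn\times\Rn)}^{p-1}\,\|b\|_{L^p(\Rn\times\Rn)}<\infty$ because $u$ and $\phi$ have finite Gagliardo seminorms; the second integrand is treated the same way, with integrable majorant $p(|u|+|\phi|)^{p-1}|\phi|$. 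Dominated convergence then justifies differentiation under both integral signs, and $i'(0)=0$, after division by $p$, is precisely \eqref{euler} with $\lambda=\lambda_1$.

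For the regularity, note that $s=\alpha-n/p$ gives $sp=\alpha p-n$, so the condition $sp>n$ of the H\"older embedding (Theorem \ref{ca}) is exactly $\alpha p>2n$; under it, applying Theorem \ref{ca} to $u\in W_0^{s,p}(\Omega)$ yields $u\in C^{0,\beta}(\Rn)$ with $\beta=(sp-n)/p=(\alpha p-2n)/p=\alpha-2n/p$, and since $u$ vanishes outside $\Omega$ it is continuous on all of $\Rn$. I expect the main obstacle to be the differentiation under the integral sign in the second step: one has to control the singular kernel $|y-x|^{-\alpha p}$ uniformly for small $t$, and the essential observation is that the elementary mean-value bound, paired with H\"older's inequality against the finite Gagliardo seminorms of $u$ and $\phi$, supplies the needed majorant. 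The existence and the H\"older continuity are, by contrast, direct consequences of the compactness and embedding theorems already stated.
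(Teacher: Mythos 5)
Your proposal is correct and follows essentially the same route as the paper: the direct method with the compact embedding (Theorem \ref{cpt}) and Fatou's lemma for existence, the replacement of $u$ by $|u|$ for non-negativity, the first variation along $u+t\phi$ for the Euler--Lagrange equation \eqref{euler}, and the H\"older embedding (Theorem \ref{ca}) with $sp=\alpha p-n$ for the $C^{0,\beta}$ regularity. The only differences are presentational: you minimize $i(t)=\text{numerator}-\lambda_1\cdot\text{denominator}$ instead of differentiating the quotient itself, and you spell out details the paper leaves implicit (weak closedness of $W_0^{s,p}(\Omega)$ and the dominated-convergence majorant $p(a+b)^{p-1}b$ justifying differentiation under the integral sign), all of which is sound.
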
 

\begin{proof} The existence of a minimizer is proved via the direct
method in the Calculus of Variations. First a minimizing sequence of
admissible functions $\phi_j$ is selected. It can be normalized so
that
$\|\phi_j\|_{L^p(\Rn)} = \|\phi_j\|_{L^p(\Omega)} = 1.$ Then we have
$$ \int_{\Rn}\!\!\int_{\Rn}\frac{|\phi_j(y)-\phi_j(x)|^p}{|y-x|^{\alpha
      p}}\,dx\,dy + \int_{\Rn}\!|\phi_j|^p\,dx \leq \lambda_1+1+1$$
for large indices $j.$ 
According to Theorem \ref{cpt}, there is a subsequence that converges in $L^p(\Rn)$.  
The limit of the subsequence, say $u$, is in
$W^{s,p}_{0}(\Omega)$ and vanishes outside $\Omega$. Fatou's lemma yields that  $u$ is minimizing. So is \emph{a fortiori} $|u|.$ Thus the existence of a non-negative
minimizer is proved.

To derive the Euler-Lagrange equation,  one uses a device due
to Lagrange. If $u$ is minimizing, consider the competing function 
$$v(x,t) = u(x) + t \phi(x), \quad \phi\in C_0^\infty(\Omega).$$ The necessary condition
\begin{equation*}
\frac{\textrm{d}}{\textrm{d}\,t}\left\{\frac{\displaystyle\int_{\Rn}\!\!\int_{\Rn}\frac{|v(y,t)-v(x,t)|^p}{|y-x|^{\alpha
      p}}\,dx\,dy}{\displaystyle\int_{\Rn}|v(x,t)|^p\,dx}\right\} = 0 \quad \text{at}
\quad t = 0
\end{equation*}
for a minimum yields the equation (\ref{euler}).

Finally, the $\beta-$H\"{o}lder continuity is a property of the fractional
Sobolev space, cf. Theorem \ref{ca}. This concludes our proof. \end{proof}

\medskip
The Euler-Lagrange equation can be written in the form
\begin{equation*}
2\, \int_{\Rn}\phi(x)\,dx\!\!\int_{\Rn}\frac{|u(y)-u(x)|^{p-2}\bigl(u(y)-u(x)\bigr)
  } {|y-x|^{\alpha
      p}}\,dy + \lambda \int_{\Rn}|u|^{p-2}u\phi\,dx=0
\end{equation*}
provided that the double integral converges.
To see this, split the double integral in (\ref{euler}) into two, one
with $\phi(x)$ and one with   $\phi(y)$. Then use symmetry. This
counts for the factor $2$. By the variational lemma the equation 
\begin{align*}
\mathcal{L}_pu\,(x)&:= 2\,\int_{\Rn}\frac{|u(y)-u(x)|^{p-2}\bigl(u(y)-u(x)\bigr)
  } {|y-x|^{\alpha
      p}}\,dy \nonumber\\
& = -\lambda |u(x)|^{p-2}u(x)
\end{align*}
holds at a.\,e. point $x \in \Omega$, if the inner integral is
summable\footnote{In the linear case this integral operator has been
  treated as the principal value of a singular integral.}. A sufficient condition is that $u$ is Lipschitz continuous
and $\alpha p < p+n-1$ (instead of $< p+n$). In this case
$\mathcal{L}_pu\,(x)$ is continuous in the variable $x$.  In the complement $\Rn \setminus
\Omega$ this equation is not valid, but there we instead have the
information that $u \equiv 0$. Symbolically we can write the
Euler-Lagrange equation as
\begin{equation*}
{\displaystyle \mathcal{L}_pu + \lambda |u|^{p-2}u \, = \, 0.}
\end{equation*}
We remark that if $u \in C_0^{1}(\Rn)$ satisfies
the equation
$$ \mathcal{L}_pu\,(x) + \lambda |u(x)|^{p-2}u(x) \, = \,0$$
in $\Omega$, then 
$$ \int_{\Rn}\!\!\int_{\Rn}\frac{|u(y)-u(x)|^{p-2}\bigl(u(y)-u(x)\bigr)
  \bigl(\phi(y)-\phi(x)\bigr)} {|y-x|^{\alpha
      p}}\,dx\,dy  = \lambda \int_{\Rn}|u|^{p-2}u\phi\,dx$$
holds whenever  $\phi \in C_0^{\infty}(\Omega)$. (See Lemma \ref{pointwiseweak}.)

 Finally, to be on the safe side, we define the concept of
 eigenfunctions. They are weak solutions of the Euler-Lagrange
 equation. Notice that they are defined in the whole space, since we consider them to be extended by zero outside $\Omega$.

\begin{definition} We say that $u\not\equiv 0, u \in W^{s,p}_0(\Omega), \, s = \alpha - n/p$, is
  an \emph{eigenfunction} of $\Omega$, if the Euler-Lagrange equation
  (\ref{euler}) holds for all test functions $\phi \in
  C_0^{\infty}(\Omega)$. The corresponding $\lambda$ is called an
  \emph{eigenvalue}.
\end{definition}

Due to the global nature of the operator $\mathcal{L}_p$ it is not
sufficient to prescribe the boundary values only on the boundary
$\partial \Omega$, but one has to declare that $u = 0$ in the whole
complement
$\Rn \setminus \Omega$. Indeed, a change of $u$ done outside
$\Omega$ can influence the entire operator $\mathcal{L}_pu$.

\section{Viscosity Solutions}

The eigenfunctions were defined as the weak solutions to the
Euler-Lagrange equation in the usual way with test functions under
 the integral sign
(Definition 2). As we will see, they are also viscosity solutions of the equation
$$ \mathcal{L}_pu + \lambda |u|^{p-2}u = 0,$$
provided that they are continuous.
This is another notion. We refer to the book \cite{Koi04} for an introduction. The theory of viscosity solutions is based on \emph{pointwise}
testing: the equation is evaluated  for test functions at points of contact. The
viscosity solutions are assumed to be continuous, 
but the fractional Sobolev space is absent from their definition.

\begin{definition} [Viscosity solutions] \label{pvisc} Suppose that the function $u$
  is continuous in $\Rn$ and that $u \equiv 0$ in $\Rn \setminus
  \Omega$. We say that $u$ is a \emph{viscosity supersolution} in $\Omega$ of the
  equation $$ \mathcal{L}_pu + \lambda |u|^{p-2}u = 0$$ if the
  following holds: whenever $x_0 \in \Omega$ and $\varphi \in
  C_0^1(\Rn)$ are such that
$$ \varphi(x_0) = u(x_0) \quad \text{and} \quad \varphi(x) \leq
u(x)\quad \text{for all} \quad
x \in \Rn,$$
then we have
$$ \mathcal{L}_p\varphi\,(x_0) + \lambda
|\varphi(x_0)|^{p-2}\varphi(x_0) \leq 0.$$
The requirement for a \emph{viscosity subsolution} is symmetric: the test
function is touching from above and the inequality is
reversed. Finally, a \emph{viscosity solution} is defined as being
both a viscosity supersolution and a viscosity subsolution.
\end{definition}

\begin{remark}\label{mono} The required pointwise inequalities for $\varphi$ are
valid also if the function $\varphi(x) + C$ touches $u$  at
$x_0.$ To see that the constant has no influence, use the following
simple monotonicity property for $\psi,\varphi\in C_0^1(\R^n)$:
$$
\textrm{if } \psi \geq \varphi\, \textrm{ and } \psi(x_0) = \varphi(x_0), \textrm{ then }
\mathcal{L}_p\psi\,(x_0) \geq \mathcal{L}_p\varphi\,(x_0).$$
\end{remark}

In order to prove that continuous weak solutions are viscosity
solutions we need a comparison principle.

\begin{lemma}[Comparison Principle]
\label{comparison}
 Let $u$ and $v$ be two continuous functions belonging to
  $W_0^{s,p}(\Rn)$. Let $D \subset\R^n$ be a domain. If
\begin{itemize}
\item   $v \geq u \quad \text{in} \quad \Rn \setminus D,$ and
\item   $ \mathcal{L}_pv\,(x) \leq \mathcal{L}_pu\,(x)$ when $x
  \in D$ in the sense that
\begin{gather*}
 \phantom{>} \int_{\Rn}\!\!\int_{\Rn}\frac{|v(y)-v(x)|^{p-2}\bigl(v(y)-v(x)\bigr)
  \bigl(\phi(y)-\phi(x)\bigr)} {|y-x|^{\alpha
      p}}\,dx\,dy\\  \geq \int_{\Rn}\!\!\int_{\Rn}\frac{|u(y)-u(x)|^{p-2}\bigl(u(y)-u(x)\bigr)
  \bigl(\phi(y)-\phi(x)\bigr)} {|y-x|^{\alpha
      p}}\,dx\,dy  
\end{gather*}
whenever $\phi \in C_0(D),\, \phi \geq 0,$
\end{itemize}
then $v \geq u$ also in $D$. That is,  $v \geq u$ in $\Rn$.
\end{lemma}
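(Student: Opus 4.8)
The plan is to exploit the strict monotonicity of the nonlinearity $t\mapsto |t|^{p-2}t$ together with a single carefully chosen test function, and to argue by contradiction. Suppose $u>v$ somewhere and set $w=u-v$, so that $w\le 0$ on $\Rn\setminus D$ while $m:=\sup_{\Rn}w>0$. For each $\e$ with $0<\e<m$ I would insert the test function
$$
\eta:=(u-v-\e)_+=(w-\e)_+
$$
into the assumed inequality $\mathcal L_pv\le\mathcal L_pu$ in $D$. Since $w\le 0<\e$ on $\Rn\setminus D$, the support of $\eta$ is contained in $\{w\ge\e\}\subset D$, which is closed and (as $D$ is bounded, or at any rate as $u$ and $v$ vanish outside a bounded set in the situations at hand) compact; and being a $1$-Lipschitz truncation of $u-v\in W^{s,p}(\Rn)$, $\eta$ lies in $W^{s,p}_0(D)$. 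Hence $\eta$ is an admissible nonnegative test function: writing $J(t)=|t|^{p-2}t$, $A=v(y)-v(x)$, $B=u(y)-u(x)$, the hypothesis (extended from $C_0^\infty(D)$ to $W^{s,p}_0(D)$ by approximating $\eta$ with its nonnegative mollifications and using continuity of the relevant bilinear form in its second argument, which follows from H\"older's inequality and $u,v\in W^{s,p}$) gives
$$
\int_{\Rn}\!\!\int_{\Rn}\frac{\bigl(J(A)-J(B)\bigr)\bigl(\eta(y)-\eta(x)\bigr)}{|y-x|^{\alpha p}}\,dx\,dy\ \ge\ 0 .
$$

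Next I would observe that this integrand is \emph{pointwise} $\le 0$. Indeed $J$ is strictly increasing, so $J(A)-J(B)$ has the same sign as $A-B=-(w(y)-w(x))$; and $t\mapsto(t-\e)_+$ is nondecreasing, so $\eta(y)-\eta(x)$ either vanishes or has the same sign as $w(y)-w(x)$. Thus the two factors carry opposite signs (or at least one vanishes), so the product is $\le 0$. Together with the displayed inequality this forces the integral to be exactly $0$, and since $|y-x|^{-\alpha p}$ is positive off the diagonal and the integrand is continuous there, I conclude $\bigl(J(A)-J(B)\bigr)\bigl(\eta(y)-\eta(x)\bigr)=0$ for all $x\ne y$. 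If $\eta(y)\ne\eta(x)$ for some such pair, then $w(y)\ne w(x)$, hence $A\ne B$, hence $J(A)\ne J(B)$ by strict monotonicity, making the product nonzero --- a contradiction. Therefore $\eta$ is constant on $\Rn$, and being compactly supported it is identically $0$; thus $w\le\e$ on $\Rn$, contradicting $\e<m$. This proves $u\le v$, i.e.\ $v\ge u$ in $\Rn$. (One may also skip the contradiction hypothesis altogether: the same computation shows $w\le\e$ for every $\e>0$, whence $w\le 0$ directly.)

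The step I expect to be the main obstacle is verifying that $\eta=(u-v-\e)_+$ is a legitimate test function: one must check that it is compactly supported \emph{inside} $D$ --- this is exactly the point where the boundary condition $v\ge u$ on $\Rn\setminus D$ is used --- and that the variational inequality, assumed only against $\phi\in C_0(D)$ with $\phi\ge 0$, may be applied to it, which requires the density/mollification argument and the $L^1$ integrability of the kernel furnished by $u,v\in W^{s,p}(\Rn)$. By contrast the algebraic core --- that $J(A)-J(B)$ and $\eta(y)-\eta(x)$ always have opposite sign --- is elementary, and it is precisely this monotone structure that makes the comparison work despite the nonlinearity and nonlocality of $\mathcal L_p$.
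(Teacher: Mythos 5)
Your proof is correct, and it follows the same overall strategy as the paper --- subtract the two inequalities and test with (a version of) the positive part of $u-v$, then show the resulting integrand has a sign --- but the execution differs in two genuine ways. First, the paper tests directly with $\phi=(u-v)^+$ and verifies nonpositivity of the integrand through the identity $|b|^{p-2}b-|a|^{p-2}a=(p-1)(b-a)\int_0^1|a+t(b-a)|^{p-2}\,dt$, which produces the nonnegative factor $Q(x,y)$ and an explicit expansion of $[\psi(x)-\psi(y)][\psi^+(y)-\psi^+(x)]$ into manifestly nonpositive terms; it then concludes that $(u-v)^+$ is constant and kills the constant with the boundary condition. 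You instead get the sign purely from the strict monotonicity of $t\mapsto|t|^{p-2}t$ together with the monotonicity of the truncation $t\mapsto(t-\e)_+$, and finish by letting $\e\to0$ (or by contradiction); this is softer and avoids the integral identity, at the price of not exhibiting the factorized structure (which is not needed here, though it is what lets the paper's computation be recycled in Theorem 9). Second, your truncation $\eta=(u-v-\e)_+$ is a genuine improvement on the admissibility point: since $u-v\le 0$ on $\Rn\setminus D$, the set $\{u-v\ge\e\}$ is closed, disjoint from $\Rn\setminus D$, and (given that $u,v$ vanish outside a bounded set, as in all the paper's applications) compact, so $\eta$ lies in $C_0(D)$ and is literally covered by the hypothesis as stated --- no mollification is even required, whereas the paper's $(u-v)^+$ may fail to have support compactly inside $D$ and strictly needs the same kind of approximation you sketch. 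The only caveats, which you flag yourself, are the implicit boundedness of the supports and the finiteness of the double integrals for the chosen test function (guaranteed by $\eta$ being a $1$-Lipschitz truncation of $u-v\in W^{s,p}$ and H\"older's inequality); the paper is no more careful on these points.
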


\medskip
\begin{proof} Subtract the equations. The resulting integral
 \begin{gather*}
 \int_{\R^n}\!\!\int_{\R^n}\frac{\bigl[|v(y)\!-\!v(x)|^{p-2}\bigl(v(y)\!-\!v(x) \bigr) -|u(y)\!-\!u(x)|^{p-2}\bigl(u(y)\!-\!u(x)\bigr)\bigr]
  \bigl(\phi(y)\!-\!\phi(x)\bigr)} {|y-x|^{\alpha
      p}}dxdy
\end{gather*}
is non-negative if $\phi\geq 0$. We aim at showing that the
integrand is non-positive for the choice $\phi = (u-v)^+$. The identity
$$|b|^{p-2}b-|a|^{p-2}a\, =\, (p-1)(b-a)\int_{0}^{1}|a+t(b-a)|^{p-2}\,dt$$
 with $a = u(y)-u(x)$ and $b = v(y)-v(x)$ gives the formula
\begin{gather*}
|v(y)-v(x)|^{p-2}\bigl(v(y)-v(x) \bigr)
-|u(y)-u(x)|^{p-2}\bigl(u(y)-u(x)\bigr)\\ 
= (p-1)\bigl\{u(x)-v(x) - \bigl(u(y)-v(y)\bigr)\bigr\}Q(x,y),
\end{gather*}
which is to be used in the integrand above. We have abbreviated\footnote{The
  idea is obvious in the case $p=2$.}
$$Q(x,y) =
\int_{0}^{1}\left|\bigl(u(y)-u(x)\bigr)+t\bigl(\bigl(v(y)-v(x)\bigr)
  -\bigl( u(y)-u(x)\bigr)\Bigr)\right|^{p-2}\,dt.
$$
We see that $Q(x,y) \geq 0$, and $Q(x,y) = 0$ only if $v(y) = v(x)$ and
$u(y) = u(x)$. We choose the test function $\phi= (u-v)^+$ and write 
$$ \psi = u-v = (u-v)^+ -(u-v)^-, \quad \phi = (u-v)^+ = \psi^+.$$
The integrand becomes the factor $(p-1)Q(x,y)/\abs{y-x}^{\alpha}$
multiplied with
\begin{align*}
[\psi(x)&-\psi(y)][\phi(y)-\phi(x)]\\
&= [\psi^+(x)-\psi^-(x)-\psi^+(y)+\psi^-(y)][\psi^+(y)-\psi^+(x)]\\
&= - \bigl(\psi^+(y)-\psi^+(x)\bigr)^2 +
\bigl(\psi^-(y)-\psi^-(x)\bigr)\bigl(\psi^+(y)-\psi^+(x)\bigr)\\
&=  -\bigl(\psi^+(y)-\psi^+(x)\bigr)^2
-\psi^-(y)\psi^+(x)-\psi^-(x)\psi^+(y),
\end{align*}
where the formula $\psi^-(x)\psi^+(x) = 0$ was used. The
integrand contains only negative terms and, to avoid a contradiction, it
is necessary that
$$\psi^+(y) = \psi^+(x) \quad \text{or} \quad Q(x,y) = 0$$
at a.\,e. point $(x,y)$. Also the latter alternative implies that $\psi^+(y) =
\psi^+(x)$. In other words, the identity
$$\bigl(u(y)-v(y)\bigr)^+ = \bigl(u(x)-v(x)\bigr)^+$$
must hold. It follows that $u(x)-v(x) = C = $ Constant $ 
\geq 0$ in the set where $u(x)\geq v(x)$. The boundary condition
requires that $C = 0$. The claim $v \geq u$ follows. \end{proof}

\begin{lemma}\label{pointwiseweak}
Let $f \in C(\Omega)$ and $v \in C^1_{0}(\Rn)$. If the inequality
$$\mathcal{L}_pv\,(x) \leq f(x)$$
is valid at each point $x$ in the subdomain $D \subset \Omega$,
then the inequality
\begin{equation}
\label{f}
 -\, \int_{\Rn}\!\!\int_{\Rn}\frac{|v(y)-v(x)|^{p-2}\bigl(v(y)-v(x)\bigr)
  \bigl(\phi(y)-\phi(x)\bigr)} {|y-x|^{\alpha
      p}}\,dx\,dy \leq \int_{\Omega}f(x)\phi(x)\,dx
\end{equation}
holds for all $\phi \in C_0(D),\, \phi \geq 0$.
\end{lemma}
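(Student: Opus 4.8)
The goal is to pass from a pointwise differential inequality $\mathcal{L}_p v(x)\le f(x)$ on $D$ to the weak (tested-against-$\phi$) inequality \eqref{f}. The plan is to take the definition of $\mathcal{L}_p v(x)$ as twice the convergent inner integral, multiply by a test function $\phi\in C_0(D)$ with $\phi\ge 0$, integrate over $\Omega$ (equivalently over $\Rn$, since $\phi$ is supported in $D\subset\Omega$), and then undo the symmetrization step that produced the factor $2$. Concretely, starting from
$$
2\int_{\Rn}\frac{|v(y)-v(x)|^{p-2}\bigl(v(y)-v(x)\bigr)}{|y-x|^{\alpha p}}\,dy\;\le\; f(x),\qquad x\in D,
$$
one multiplies by $\phi(x)\ge 0$ and integrates $dx$ over $\Rn$, obtaining
$$
2\int_{\Rn}\!\!\int_{\Rn}\frac{|v(y)-v(x)|^{p-2}\bigl(v(y)-v(x)\bigr)}{|y-x|^{\alpha p}}\,\phi(x)\,dy\,dx\;\le\;\int_{\Omega}f(x)\phi(x)\,dx.
$$
The left-hand integrand splits into the piece with $\phi(x)$ and the piece with $\phi(y)$ exactly as in the passage already carried out in the paper between the Euler–Lagrange equation \eqref{euler} and the equation for $\mathcal{L}_p$: by relabeling $x\leftrightarrow y$ and using the antisymmetry of $|v(y)-v(x)|^{p-2}(v(y)-v(x))$ under that swap, the double integral equals
$$
-\int_{\Rn}\!\!\int_{\Rn}\frac{|v(y)-v(x)|^{p-2}\bigl(v(y)-v(x)\bigr)\bigl(\phi(y)-\phi(x)\bigr)}{|y-x|^{\alpha p}}\,dx\,dy,
$$
which is precisely the left side of \eqref{f}, so \eqref{f} follows.

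The only genuine issue is making the symmetrization rigorous, i.e.\ justifying that one may split the integral and rename variables. Since $v\in C^1_0(\Rn)$, we have $|v(y)-v(x)|\le \|\nabla v\|_\infty|y-x|$ near the diagonal and $|v(y)-v(x)|\le 2\|v\|_\infty$ globally; because $\alpha p < n+p-1 < n+p$, the factor $|y-x|^{p-1}/|y-x|^{\alpha p}$ is integrable near $y=x$, and $|y-x|^{-\alpha p}$ is integrable at infinity since $\alpha p>n$. Moreover $\phi\in C_0(D)$ is bounded with compact support, so the double integral $\int\!\!\int |v(y)-v(x)|^{p-1}|\phi(x)|\,|y-x|^{-\alpha p}\,dx\,dy$ is absolutely convergent. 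Hence Fubini's theorem applies, the splitting into the $\phi(x)$-part and the $\phi(y)$-part is legitimate, and the change of variables $x\leftrightarrow y$ in the second part is valid. (Here the hypothesis $\alpha p<p+n-1$ — invoked earlier in the paper exactly to guarantee summability of the inner integral for Lipschitz functions — is what is being used; for $v\in C^1_0$ this is automatic.) I would state this absolute-convergence bound as the one computational step and then conclude.

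One should also remark that the pointwise inequality is assumed at \emph{each} point of $D$, so no measure-theoretic subtlety arises in multiplying by $\phi$ and integrating; and that extending the $dx$-integration from $D$ to $\Rn$ is harmless because $\phi\equiv 0$ off $D$. The symmetric statement (super/sub) is obtained by reversing all inequalities, so it suffices to prove the one direction. This is the argument; it is essentially the reverse of the derivation already presented for $\mathcal{L}_p$ just after \eqref{euler}, now run at the level of an inequality rather than an equality, with the sign of $\phi$ ensuring the inequality is preserved under integration.
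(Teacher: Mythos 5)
Your proof is correct and takes essentially the same route as the paper's: multiply the pointwise inequality by $\phi(x)\geq 0$, integrate (extending the outer integration to $\Rn$ since $\phi$ vanishes off $D$), and desymmetrize by swapping $x$ and $y$ and using the antisymmetry of $|v(y)-v(x)|^{p-2}(v(y)-v(x))$ to produce the factor $\phi(y)-\phi(x)$. Your explicit absolute-convergence bound (Lipschitz control near the diagonal with $\alpha p<n+p-1$, decay $|y-x|^{-\alpha p}$ with $\alpha p>n$ at infinity) merely spells out what the paper leaves implicit, so there is nothing to add.
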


\medskip
\begin{proof} Multiply the inequality $\mathcal{L}_pv\,(x) \leq f(x)$
with $\phi(x)$ and integrate over $D$ to obtain
$$
\phantom{-} 2\,\int_{D}\!\int_{\Rn}\frac{|v(y)-v(x)|^{p-2}\bigl(v(y)-v(x)\bigr)
  \phi(x)} {|y-x|^{\alpha
      p}}\,dy\,dx \leq \int_{\Omega}f(x)\phi(x)\,dx.$$
We can replace $D$ by $\Rn$ in the outer integration. Switching $x$
and $y$, we can write
$$
-2\,\int_{D}\!\int_{\Rn}\frac{|v(y)-v(x)|^{p-2}\bigl(v(y)-v(x)\bigr)
  \phi(y)} {|y-x|^{\alpha
      p}}\,dx\,dy \leq \int_{\Omega}f(y)\phi(y)\,dy.$$
Notice the minus sign. Adding the expressions we arrive at (\ref{f}). \end{proof}

\begin{prop} Let $\alpha p < n+p-1$. An eigenfunction $u \in C_0(\overline{\Omega})$ is a
  viscosity solution of the equation 
$$\mathcal{L}_pu = - \lambda|u|^{p-2}u.$$
\end{prop}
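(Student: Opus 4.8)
The plan is to show that a continuous eigenfunction $u$ satisfies the viscosity supersolution and subsolution inequalities by a standard contradiction argument built on the Comparison Principle (Lemma \ref{comparison}) and on Lemma \ref{pointwiseweak}. By symmetry it suffices to treat the supersolution case. So let $x_0 \in \Omega$ and $\varphi \in C_0^1(\Rn)$ with $\varphi(x_0) = u(x_0)$ and $\varphi \leq u$ everywhere, and suppose, for contradiction, that
$$\mathcal{L}_p\varphi\,(x_0) + \lambda |\varphi(x_0)|^{p-2}\varphi(x_0) > 0.$$
Since $\alpha p < n+p-1$ and $\varphi \in C_0^1$, the map $x \mapsto \mathcal{L}_p\varphi\,(x)$ is continuous (this was noted after the Euler--Lagrange equation), and so is $x \mapsto \lambda|\varphi(x)|^{p-2}\varphi(x)$; hence there is a small ball $D = B_r(x_0) \subset \Omega$ on which $\mathcal{L}_p\varphi\,(x) + \lambda|\varphi(x)|^{p-2}\varphi(x) > 0$, i.e. $\mathcal{L}_p\varphi\,(x) > -\lambda|\varphi(x)|^{p-2}\varphi(x)$ for all $x \in D$.

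The next step is to lift $\varphi$ slightly so it becomes an admissible competitor in the Comparison Principle while staying a strict pointwise subsolution of the eigenvalue equation. Set $\varphi_\theta = \varphi + \theta$ for a small $\theta > 0$. By Remark \ref{mono}, adding the constant does not change $\mathcal{L}_p$, so $\mathcal{L}_p\varphi_\theta = \mathcal{L}_p\varphi$ on $D$; and by continuity of $t \mapsto |t|^{p-2}t$ we may shrink $r$ and $\theta$ so that we still have $\mathcal{L}_p\varphi_\theta\,(x) > -\lambda|\varphi_\theta(x)|^{p-2}\varphi_\theta(x)$ throughout $D$. Now translate the pointwise inequality into a weak one: applying Lemma \ref{pointwiseweak} with $v = \varphi_\theta$ and $f(x) = -\lambda|\varphi_\theta(x)|^{p-2}\varphi_\theta(x)$ (or rather with $f$ the continuous function $\mathcal{L}_p\varphi_\theta$ itself, using $\mathcal{L}_p\varphi_\theta \le \mathcal{L}_p\varphi_\theta$), we obtain that $\varphi_\theta$ is a weak subsolution of $\mathcal{L}_p w = -\lambda|\varphi_\theta|^{p-2}\varphi_\theta$ in $D$. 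The point of introducing $f = \mathcal{L}_p\varphi_\theta$ is subtle: to invoke comparison against $u$ we need $\varphi_\theta$ and $u$ to satisfy \emph{the same} right-hand side in the weak formulation, but $u$ solves $\mathcal{L}_p u = -\lambda|u|^{p-2}u$ whereas $\varphi_\theta$ has $-\lambda|\varphi_\theta|^{p-2}\varphi_\theta$ on the right. This is the main obstacle, and it is resolved exactly as in the local theory: shrink $D$ further so that $-\lambda|\varphi_\theta|^{p-2}\varphi_\theta < -\lambda|u|^{p-2}u$ on $D$ — possible because on $D$ one has, after the small perturbations, $\varphi_\theta \le u$ with $\varphi_\theta(x_0) = u(x_0) + \theta$ away from equality, combined with monotonicity of $t\mapsto -|t|^{p-2}t$ — so that in the weak sense $\mathcal{L}_p\varphi_\theta\,(x) \le \mathcal{L}_p u\,(x)$ holds in $D$ in the sense required by Lemma \ref{comparison}. (One should be slightly careful to choose $\theta$ small relative to $\min_{\partial D}(u-\varphi)$ so that still $\varphi_\theta \le u$ on $\Rn \setminus D$; since $u - \varphi$ is continuous and strictly positive on the compact sphere $\partial D = \partial B_r(x_0)$, this is fine, and outside $B_r(x_0)$ we keep $\varphi_\theta \le u$ only where $u - \varphi \ge \theta$, which after shrinking $r$ includes all of $\Rn \setminus D$ — here one uses that $u-\varphi$ attains its minimum over $\Rn\setminus D$ on the compact set $\overline{\Omega}\setminus D$ and is bounded below there by a positive constant.)

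With these choices in hand, the Comparison Principle applies with this $D$ and with $v = u$, $u \leadsto \varphi_\theta$ in its statement: $\varphi_\theta \le u$ in $\Rn \setminus D$ and $\mathcal{L}_p\varphi_\theta \le \mathcal{L}_p u$ in $D$ in the weak sense, hence $\varphi_\theta \le u$ in $D$ as well. But $\varphi_\theta(x_0) = \varphi(x_0) + \theta = u(x_0) + \theta > u(x_0)$, contradicting $\varphi_\theta \le u$ at $x_0$. This contradiction forces $\mathcal{L}_p\varphi\,(x_0) + \lambda|\varphi(x_0)|^{p-2}\varphi(x_0) \le 0$, which is precisely the supersolution inequality. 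The subsolution inequality follows by the same argument applied to $-u$ (which is an eigenfunction with the same eigenvalue, since the equation is odd) or, equivalently, by reversing all inequalities and touching from above. Therefore $u$ is a viscosity solution. The only genuinely delicate point throughout is keeping track of the two distinct right-hand sides $-\lambda|\varphi_\theta|^{p-2}\varphi_\theta$ and $-\lambda|u|^{p-2}u$ and arranging, by localization plus the strict inequality gained from the contradiction hypothesis, that the former dominates the latter on $D$, so that Lemma \ref{comparison} can be invoked with a single comparison between $\mathcal{L}_p\varphi_\theta$ and $\mathcal{L}_p u$.
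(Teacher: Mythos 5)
There is a genuine gap, and it sits exactly at the point your argument leans on hardest: the global constant shift $\varphi_\theta=\varphi+\theta$ can never satisfy the exterior hypothesis of the Comparison Principle. The eigenfunction $u$ vanishes identically outside $\Omega$ and $\varphi\in C_0^1(\Rn)$ vanishes outside a compact set, so on the (unbounded) set $\Rn\setminus(\Omega\cup\spt\varphi)$ one has $u-\varphi=0$, whence $\varphi_\theta=\theta>0=u$ there for every $\theta>0$. Your parenthetical repair — that $u-\varphi$ is bounded below by a positive constant on $\overline\Omega\setminus D$ — is both insufficient (the problem occurs outside $\overline\Omega$ as well) and false in general: at $\partial\Omega$ we have $u=0\geq\varphi$ with equality perfectly possible, and nothing in Definition \ref{pvisc} prevents $\varphi$ from touching $u$ at points other than $x_0$. (A further, more cosmetic, obstruction: $\varphi+\theta$ is not in $W_0^{s,p}(\Rn)$, the class in which Lemma \ref{comparison} is stated.) This is precisely why the paper does not shift by a constant but perturbs \emph{locally}: it takes a cutoff $\eta$ with $\eta(x_0)=0$ and $\eta=1$ outside $B(x_0,r)$ and uses $\varphi+\e(1-\eta)$ (in the paper's subsolution case, $v=\phi+\e\eta-\e$), so the test function is raised (resp.\ lowered) only inside the small ball where the comparison is run and is left untouched outside it; the price is a limit argument showing $\mathcal{L}_p v_\e\to\mathcal{L}_p\varphi$ uniformly on compacts, which is where the hypothesis $\alpha p<n+p-1$ and dominated convergence enter.

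A second, lesser, defect is the handling of the two right-hand sides. Your justification that $-\lambda|\varphi_\theta|^{p-2}\varphi_\theta<-\lambda|u|^{p-2}u$ on $D$ "because $\varphi_\theta\leq u$ on $D$" is circular (that inequality on $D$ is exactly what the comparison step is supposed to deliver) and in fact points the wrong way, since for $\lambda>0$ the displayed inequality is equivalent to $\varphi_\theta>u$. The clean route, which is the paper's, is to bypass $-\lambda|\varphi|^{p-2}\varphi$ altogether: from the strict contradiction inequality at $x_0$, the continuity of $x\mapsto\mathcal{L}_p\varphi(x)$ and of $u$, together with $\varphi(x_0)=u(x_0)$, give directly $\mathcal{L}_p v_\e(x)\gtrless -\lambda|u(x)|^{p-2}u(x)=f(x)$ on a small ball for the (locally) perturbed test function; Lemma \ref{pointwiseweak} then turns this into the weak inequality against the \emph{same} right-hand side that $u$ satisfies, and Lemma \ref{comparison} finishes as you intend. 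So the architecture of your argument (contradiction, localization by continuity, pointwise-to-weak, comparison) matches the paper's, but the constant-shift device must be replaced by the compactly supported bump for the proof to stand.
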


\begin{proof} We prove the case of a subsolution, assuming for
simplicity that $u \geq 0$.  Our proof is indirect. If $u$ is not a
viscosity subsolution, the
\emph{antithesis} is that there exist a testfunction $\phi$ and a point
$x_0$ in $\Omega$
such that
\begin{gather*}
\phi \in C_0^1({\R^n}), \quad \phi \geq u, \quad \phi(x_0)
= u(x_0),\\
\mathcal{L}_p\phi\,(x_0) < - \lambda |\phi(x_0)|^{p-2}\phi(x_0).
\end{gather*}
By continuity
\begin{equation*}
\mathcal{L}_p\phi\,(x) < - \lambda |\phi(x_0)|^{p-2}\phi(x_0)
\end{equation*}
holds when $x \in B(x_0,2r)$, where the radius $r$ is small
enough. This means that $\phi$ is a ``strict supersolution'' in the
ball. 
 We need to modify $\phi$. For the purpose we choose a smooth radial
 function $\eta \in C^{\infty}(\Rn)$  such that $0 \leq \eta(x) \leq
 1$
and
\begin{align*}
&\eta(x_0) = 0,\\
&\eta(x )> 0, \,\,\text{when}\,\, x \not = x_0,\\
&\eta(x) = 1, \,\,\text{when}\,\, |x-x_0| \geq r.
\end{align*}
Let $\varepsilon > 0$ be  small and consider the
function
$$v = v_{\varepsilon} = \phi + \varepsilon \eta - \varepsilon.$$
Outside $B(x_0,r)$ it coincides with $\phi$. By Lebesgue's Dominated
Convergence Theorem
$$
\lim_{\varepsilon \to 0}\mathcal{L}_pv_{\varepsilon}\,(x) = \mathcal{L}_p\phi\,(x).
$$
A closer inspection reveals that, actually, the limit is uniform
 on compact sets. Since $u$ is continuous, it follows that for
a sufficiently small $\varepsilon > 0$
$$\mathcal{L}_pv_{\varepsilon}\,(x) < - \lambda |u(x)|^{p-2}u(x) =
f(x)$$
when $x \in B(x_0,r)$. By the previous lemma this inequality also holds in the
weak sense with test functions under the integral sign. Thus equation
(\ref{f})
is available. 

Now $\mathcal{L}_pv \leq \mathcal{L}_pu$ in the weak sense in
$B(x_0,r)$, as described in Lemma \ref{comparison}. By the construction
\begin{align*}
v = \phi \quad \text{in} \quad \Rn \setminus B(x_0,r).
\end{align*}
In particular,
$$ v \geq u \quad \text{in} \quad \Rn \setminus  B(x_0,r).$$ By the comparison
principle (Lemma \ref{comparison}) 
$$ v \geq u \quad \text{in} \quad   B(x_0,r).$$ But this contradicts
the fact that
$$v(x_0) = \phi(x_0) - \varepsilon = u(x_0) - \varepsilon < u(x_0).$$
Thus the antithesis is false. We have proved that $u$ is a viscosity
subsolution. ---The case of viscosity supersolutions is
similar. \end{proof}

\medskip
The next result shows that the first
eigenfunctions cannot have zeros in the domain.

\begin{lemma}[Positivity]
\label{positivity}
Assume $u \geq 0$ and $u\equiv 0$ in $\R^n\setminus \Omega$. If $u$ is a viscosity supersolution of the equation
  $\mathcal{L}_pu = 0$ in $\Omega$, then either $u>0$ in $\Omega$
  or $u\equiv0.$
\end{lemma}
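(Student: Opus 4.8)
The plan is to argue by contradiction via a strong-minimum-principle mechanism adapted to the nonlocal operator $\mathcal{L}_p$. Suppose $u \geq 0$ is a viscosity supersolution of $\mathcal{L}_p u = 0$ in $\Omega$, that $u \not\equiv 0$, yet $u(x_0) = 0$ for some $x_0 \in \Omega$. Since $u$ is continuous and attains its minimum value $0$ at the interior point $x_0$, the constant function $\varphi \equiv 0$ (which lies in $C_0^1(\R^n)$ after a trivial cutoff, or more honestly a smooth compactly supported bump that stays below $u$ and equals $0$ near $x_0$ — here one uses Remark \ref{mono} to add constants freely) touches $u$ from below at $x_0$. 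The viscosity supersolution inequality then gives $\mathcal{L}_p\varphi\,(x_0) \leq 0$. But for this touching function, $\mathcal{L}_p\varphi\,(x_0)$ reduces to
$$
2\int_{\Rn}\frac{|u(y)|^{p-2}\bigl(u(y)\bigr) - 0}{|y-x_0|^{\alpha p}}\,dy
$$
in the relevant sense — more precisely, one must take the test function $\varphi$ that agrees with $u$ at $x_0$ and lies below $u$ everywhere, and observe that in the definition of $\mathcal{L}_p\varphi\,(x_0)$ the differences $\varphi(y)-\varphi(x_0)$ can be bounded below by $0$ only at points where $\varphi$ is forced up, whereas away from $x_0$ we have genuine positivity of $u$ somewhere. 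The key point: because $u \not\equiv 0$ and $u$ is continuous, there is a ball where $u \geq c > 0$, and the contribution of that region to the integral defining the operator at $x_0$ is strictly positive.

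The main technical obstacle is that the pointwise operator $\mathcal{L}_p\varphi\,(x_0)$ is defined for $C^1$ test functions, and the constant/flat function does not "see" $u$'s positivity directly; one has to be careful that $\mathcal{L}_p u\,(x_0)$ itself is well-defined (the inner integral is summable near $x_0$ precisely because $u$ is, say, Hölder and $\alpha p < n + p - 1$ is not even needed since $u(x_0)=0$ kills the singular part). The cleanest route: first establish that if $\varphi \leq u$ with equality at $x_0$, then by the monotonicity of Remark \ref{mono} and a limiting argument (approximating $u$ near $x_0$ from below by admissible $C^1$ test functions, as in the proof of the previous Proposition), one gets $\mathcal{L}_p u\,(x_0) \leq 0$ in a suitable weak/pointwise sense. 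Then, since $u(x_0) = 0$,
$$
\mathcal{L}_p u\,(x_0) = 2\int_{\Rn}\frac{|u(y)|^{p-2}u(y)}{|y-x_0|^{\alpha p}}\,dy \geq 0,
$$
with strict inequality unless $u \equiv 0$ almost everywhere, because the integrand is non-negative and $u \not\equiv 0$. This contradicts $\mathcal{L}_p u\,(x_0) \leq 0$ — or rather, it forces $u \equiv 0$, which is the alternative in the statement.

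Carrying this out rigorously, I would: (i) reduce to the case that $u$ has an interior zero $x_0$ and assume $u \not\equiv 0$; (ii) construct, for each small $\e > 0$, a test function $\varphi_\e \in C_0^1(\R^n)$ with $\varphi_\e \leq u$, $\varphi_\e(x_0) = u(x_0) = 0$, and $\varphi_\e \to 0$ appropriately, applying the supersolution condition to get $\mathcal{L}_p\varphi_\e\,(x_0) \leq 0$; (iii) pass to the limit using Fatou's lemma on the positive part and dominated convergence on the part near $x_0$ (legitimate because $\varphi_\e$ is flat there), concluding $2\int_{\Rn}|u(y)|^{p-2}u(y)\,|y-x_0|^{-\alpha p}\,dy \leq 0$; (iv) observe the integrand is $\geq 0$ everywhere, hence $u(y) = 0$ for a.e.\ $y$, hence $u \equiv 0$ by continuity — contradicting the antithesis. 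The delicate step is (iii): one must ensure the test functions can be chosen so that their values far from $x_0$ approach $u$'s values (so that the positive contribution survives in the limit), which is where the continuity of $u$ and a careful choice of $\varphi_\e$ (e.g.\ $\varphi_\e = \min(u, \text{smooth approximant})$ mollified, or $u$ itself mollified downward) is essential; this is exactly the kind of approximation already used to show eigenfunctions are viscosity solutions, so the machinery is in place.
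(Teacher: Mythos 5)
Your core mechanism is the right one, and it is in fact the paper's: test at the interior zero $x_0$ with a non-negative test function lying below $u$ and vanishing at $x_0$; since $\psi(x_0)=0$, every difference $\psi(y)-\psi(x_0)=\psi(y)\geq 0$, so $\mathcal{L}_p\psi\,(x_0)=2\int_{\Rn}|\psi(y)|^{p-2}\psi(y)\,|y-x_0|^{-\alpha p}\,dy\geq 0$, while the supersolution property gives $\mathcal{L}_p\psi\,(x_0)\leq 0$, hence $\psi\equiv 0$. The paper stops there: if $u\not\equiv 0$, continuity yields a ball where $u\geq c>0$, and a single smooth bump $\psi$ supported in that ball with $0\leq\psi\leq c$ satisfies $0\leq\psi\leq u$, $\psi(x_0)=0=u(x_0)$, yet is positive somewhere --- an immediate contradiction, with no limit procedure at all. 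Your version instead constructs a family $\varphi_\varepsilon\to u$ touching at $x_0$ and passes to the limit via Fatou to reach the pointwise inequality $\int_{\Rn}|u(y)|^{p-2}u(y)\,|y-x_0|^{-\alpha p}\,dy\leq 0$. That is logically workable, but it is strictly more machinery than needed, and the step you flag as delicate (iii) dissolves once you notice that each individual $\varphi_\varepsilon\geq 0$ with $\varphi_\varepsilon(x_0)=0$ is already forced to vanish identically by the sign argument above, so the limit adds nothing that one fixed bump does not give. Two smaller points: your first display is incorrect as written, since for the constant test function $\varphi\equiv 0$ one simply gets $\mathcal{L}_p\varphi\,(x_0)=0$ and the inequality $0\leq 0$ carries no information (which is exactly why the test function itself must pick up the positivity of $u$ away from $x_0$, as you later recognize); and there is no need to make sense of $\mathcal{L}_p u\,(x_0)$ pointwise or to invoke summability of the singular part, since the viscosity definition is only ever applied to $C_0^1$ test functions.
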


\begin{proof} Recall that being a supersolution means that $\mathcal{L}_p\psi \leq 0$
for the test functions below.  At a point $x_0$ in $\Omega$ where $u(x_0) = 0$ we have
for any test function $\psi$ that touches $u$ from below  that
$$0 \geq  \mathcal{L}_p\,\psi(x_0) = 2
\int_{\Rn}\frac{|\psi(y)|^{p-2}\psi(y)\,dy}{|y-x_0|^{\alpha p}}$$
since $\psi(x_0) = 0$. If $\psi \geq 0$ this implies that $\psi \equiv
0$. But, if $u \not \equiv 0$, we can certainly, using the continuity
of $u$, select a test function $\psi$
so that
$0 \leq \psi \leq u$ which is positive at some point.
\end{proof}

 It is noteworthy that the result above does not hold true if $u$ is
 not non-negative in $\R^n\setminus \Omega$. This is related to the
 fact that the usual Harnack inequality fails for non-local operators
 in general. See \cite{Kas07}, for an explicit counter example in the
 case $p=2$.

\section{Uniqueness of Positive Eigenfunctions}

We know that a continuous non-negative eigenfunction cannot have any zeros in
the domain $\Omega$ (Lemma \ref{positivity}). We shall prove that the
only positive eigenfunctions are the first ones and also that \emph{the first
eigenvalue is simple}. In other words, if $u_1$ is a minimizer of the 
Rayleigh quotient, \emph{all positive eigenfunctions are
of the form} $u(x) = Cu_1(x)$. First, we have to prove that the
minimizer is unique, except for multiplication by constants. Then it
will be established that a positive eigenfunction is a
minimizer. ---We will encounter the difficulty with the lack of an
adequate regularity theory for our equation. To avoid such issues
here, we deliberately take $\alpha p > 2n$, which guarantees the
continuity of the eigenfunctions.

We use an elementary inequality for the auxiliary function $$\ss(s,t) =
|s^{1/p}-t^{1/p}|^p, \qquad s>0,\, t>0.$$

\begin{lemma} The function $\ss(s,t)$ is convex in the quadrant
  $s>0,t>0$. Thus
$$\ss\bigl(\frac{s_1+s_2}{2}, \frac{t_1+t_2}{2}\bigr) \leq
\frac{1}{2}\ss(s_1,t_1) + \frac{1}{2}\ss(s_2,t_2).$$
Moreover, equality holds only for $s_1t_2 = s_2t_1.$
\end{lemma}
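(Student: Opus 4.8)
The plan is to exploit the positive $1$-homogeneity of $\ss$: directly from the definition, $\ss(\lambda s,\lambda t)=\lambda\,\ss(s,t)$ for every $\lambda>0$. For a function on the convex cone $\{s>0,\,t>0\}$ that is positively $1$-homogeneous, convexity follows from subadditivity, because
$$
\ss\bigl((1-\theta)P_1+\theta P_2\bigr)\le\ss\bigl((1-\theta)P_1\bigr)+\ss\bigl(\theta P_2\bigr)=(1-\theta)\ss(P_1)+\theta\,\ss(P_2)
$$
for $P_i=(s_i,t_i)$ and $0\le\theta\le1$. So it suffices to prove the subadditivity inequality $\ss(s_1+s_2,\,t_1+t_2)\le\ss(s_1,t_1)+\ss(s_2,t_2)$; the displayed midpoint inequality is then the case $\theta=\tfrac12$.

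To prove subadditivity I would substitute $a_i=s_i^{1/p}$, $b_i=t_i^{1/p}$ (all positive) and set $\mathbf a=(a_1,a_2)$, $\mathbf b=(b_1,b_2)\in\R^2$. Then $\ss(s_i,t_i)=|a_i-b_i|^p$, while $(s_1+s_2)^{1/p}=\|\mathbf a\|_p$ and $(t_1+t_2)^{1/p}=\|\mathbf b\|_p$, so the inequality becomes
$$
\bigl|\,\|\mathbf a\|_p-\|\mathbf b\|_p\,\bigr|^p\le|a_1-b_1|^p+|a_2-b_2|^p=\|\mathbf a-\mathbf b\|_p^p ,
$$
i.e. the reverse triangle inequality $\bigl|\,\|\mathbf a\|_p-\|\mathbf b\|_p\,\bigr|\le\|\mathbf a-\mathbf b\|_p$ for the $\ell^p$-norm on $\R^2$ raised to the power $p$, which is an immediate consequence of Minkowski's inequality. (A more computational alternative would be to check that the Hessian of $\ss$ is positive semidefinite off the diagonal $\{s=t\}$ and then patch across the diagonal using continuity and $\ss\ge0$, but the homogeneity route is shorter and cleaner.)

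For the equality assertion I would run this chain backwards. If equality holds in the midpoint inequality, then equality must hold in the reverse triangle inequality above. Assuming $\|\mathbf a\|_p\ge\|\mathbf b\|_p$ (the other case being symmetric), this is equality in $\|\mathbf b+(\mathbf a-\mathbf b)\|_p\le\|\mathbf b\|_p+\|\mathbf a-\mathbf b\|_p$; since $p\ge2$ the $\ell^p$-norm is strictly convex, so the equality case of Minkowski forces $\mathbf a-\mathbf b$ to be a non-negative multiple of $\mathbf b$, hence $\mathbf a=c\,\mathbf b$ with $c\ge1$. Then $s_i=a_i^p=c^p b_i^p=c^p t_i$ for $i=1,2$, and therefore $s_1t_2=c^p t_1t_2=s_2t_1$. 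Conversely, if $s_1t_2=s_2t_1$ then $a_1/b_1=a_2/b_2=:c$, so $\mathbf a=c\,\mathbf b$, and both sides of the midpoint inequality equal $\tfrac12|c-1|^p\|\mathbf b\|_p^p$.

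The step I expect to need the most care is the invocation of the equality case of Minkowski's inequality — in particular noting that it is exactly here that $p>1$, hence strict convexity of $\ell^p$, enters — together with tracking the ``positively proportional'' conclusion back through the substitution to the clean form $s_1t_2=s_2t_1$. Everything else is routine bookkeeping.
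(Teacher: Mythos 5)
Your argument is correct, but it is a genuinely different route from the one in the paper. The paper proves convexity by a direct second--order computation: it evaluates the quadratic form of the Hessian of $|s^{1/p}-t^{1/p}|^p$ and finds it equal to $\tfrac{p-1}{p}\,|s^{1/p}-t^{1/p}|^{p-2}(st)^{1/p}$ times a perfect square, so the form is positive semidefinite and in fact degenerate (the function solves a Monge--Amp\`ere equation); the null directions are exactly the radial ones, which is how the equality condition $s_1t_2=s_2t_1$ is read off ``by inspection''. You instead avoid derivatives altogether: positive $1$-homogeneity reduces convexity to subadditivity, and after the substitution $a_i=s_i^{1/p}$, $b_i=t_i^{1/p}$ subadditivity is precisely the reverse triangle inequality for the $\ell^p$-norm on $\R^2$, i.e.\ Minkowski's inequality; the equality case then comes from strict convexity of the $\ell^p$-norm for $1<p<\infty$ (in particular $p\geq 2$), and unwinding the substitution gives $s_1t_2=s_2t_1$. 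The two proofs encode the same geometric fact --- the function is linear along rays through the origin and strictly convex transversally --- but yours buys a calculus-free argument with the equality case handled by a citable classical result, while the paper's is a more compact computation that additionally exhibits the Monge--Amp\`ere structure. One small point to make explicit when writing yours up: by homogeneity the midpoint inequality is equivalent to the subadditivity inequality at $(s_1+s_2,t_1+t_2)$, so equality in the former really does force equality in Minkowski, which is the step your equality analysis relies on; also note $\mathbf b\neq 0$ (since $t_1,t_2>0$), so the ``non-negatively proportional'' conclusion is legitimate even when $\mathbf a=\mathbf b$.
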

\medskip
\begin{proof} As a matter of fact, $\ss$ is a solution to the
Monge-Amp\`ere equation $$\ss_{ss}\ss_{tt}-\ss_{st}^2 = 0.$$ A direct
calculation yields the expression 
$$\ss_{ss}(s,t)X^{2}+2\ss_{st}(s,t)XY+\ss_{tt}(s,t)Y^{2} =
\tfrac{p-1}{p}|s^{1/p}-t^{1/p}|^{p-2}(st)^{1/p}\Bigl(\frac{s}{X}-\frac{t}{Y}\Bigr)^2$$
for the quadratic form associated with the Hessian matrix. The
quadratic form is strictly positive except when $s=t$ or
$\frac{s}{X}=\frac{t}{Y}$. The result follows by inspection. \end{proof}

\begin{thm} Take $\alpha p > 2n$. The minimizer of the Rayleigh quotient is unique,
  except that it may be multiplied by a constant.
\end{thm}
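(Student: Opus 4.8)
The proof will rest on the ``hidden convexity'' of the Rayleigh quotient, exploiting the function $\ss$ of the preceding lemma in the spirit of the Benguria--Brezis--Lieb argument for the first eigenvalue.

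First I would reduce to non-negative competitors: since a minimizer of \eqref{ray} cannot change sign and $|u|$ is again a minimizer whenever $u$ is, it suffices to prove that any two \emph{non-negative} minimizers $u_1,u_2$ are proportional. Normalize them so that $\|u_1\|_{L^p(\Rn)} = \|u_2\|_{L^p(\Rn)} = 1$. Since $\alpha p > 2n$ both are continuous, and by Lemma \ref{positivity} both are strictly positive throughout $\Omega$. Now introduce the competitor
$$ w(x) \;=\; \left(\frac{u_1(x)^p + u_2(x)^p}{2}\right)^{1/p}. $$
Then $w \equiv 0$ outside $\Omega$, and $\|w\|_{L^p(\Rn)}^p = \tfrac12\bigl(\|u_1\|_p^p + \|u_2\|_p^p\bigr) = 1$. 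One must check that $w$ is an admissible function, i.e.\ $w\in W_0^{s,p}(\Omega)$: this follows because $(a,b)\mapsto\bigl((a^p+b^p)/2\bigr)^{1/p}$ is globally Lipschitz on the first quadrant (it is an $\ell^p$-type norm), so $w$ inherits a finite Gagliardo seminorm and the vanishing boundary behaviour from $u_1,u_2$.

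The key step is to apply the convexity inequality of the preceding lemma pointwise, with $(s_1,t_1) = (u_1(x)^p,u_1(y)^p)$ and $(s_2,t_2) = (u_2(x)^p,u_2(y)^p)$. Since $\ss(w(x)^p,w(y)^p) = |w(y)-w(x)|^p$, and likewise for $u_1$ and $u_2$, this yields
$$ |w(y)-w(x)|^p \;\le\; \tfrac12\,|u_1(y)-u_1(x)|^p + \tfrac12\,|u_2(y)-u_2(x)|^p $$
for all $x,y\in\Rn$ (trivially so when one of $x,y$ lies outside $\Omega$). Dividing by $|y-x|^{\alpha p}$ and integrating over $\Rn\times\Rn$ gives
$$ \iint\frac{|w(y)-w(x)|^p}{|y-x|^{\alpha p}}\,dx\,dy \;\le\; \tfrac12\iint\frac{|u_1(y)-u_1(x)|^p}{|y-x|^{\alpha p}}\,dx\,dy + \tfrac12\iint\frac{|u_2(y)-u_2(x)|^p}{|y-x|^{\alpha p}}\,dx\,dy \;=\; \lambda_1, $$
using the normalization and that $u_1,u_2$ are minimizers. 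Hence the Rayleigh quotient of $w$ is at most $\lambda_1$; since, by density of $C_0^\infty(\Omega)$ in $W_0^{s,p}(\Omega)$, it is also at least $\lambda_1$, the function $w$ is itself a minimizer and \emph{all} the inequalities above are equalities.

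Equality in the integrated estimate forces equality in the pointwise convexity inequality for a.e.\ $(x,y)$, as the kernel $|y-x|^{-\alpha p}$ is positive. By the equality clause of the lemma, $s_1t_2 = s_2t_1$, that is $u_1(x)^p u_2(y)^p = u_2(x)^p u_1(y)^p$, equivalently $u_1(x)/u_2(x) = u_1(y)/u_2(y)$, for a.e.\ pair $(x,y)\in\Omega\times\Omega$; by continuity and positivity it then holds for \emph{every} such pair, so $u_1/u_2$ is a positive constant on $\Omega$, which is the assertion. The main point needing care is the admissibility of $w$ (membership in $W_0^{s,p}(\Omega)$, not merely finiteness of the seminorm); the bookkeeping of the complement contribution $2\int_{\Rn\setminus\Omega}dy\int_\Omega\frac{|\cdot|^p}{|y-x|^{\alpha p}}\,dx$ to the seminorm is harmless, since it transforms additively under $|\cdot|^p\mapsto(u_1^p+u_2^p)/2$. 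Everything else is the elementary convexity already established.
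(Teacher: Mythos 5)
Your proposal is correct and follows essentially the same route as the paper: reduce to positive normalized minimizers, form $w=\bigl((u_1^p+u_2^p)/2\bigr)^{1/p}$, apply the convexity lemma for $\ss(s,t)=|s^{1/p}-t^{1/p}|^p$ pointwise, integrate against the kernel, and use the equality case to conclude proportionality. Your extra remarks on the admissibility of $w$ in $W_0^{s,p}(\Omega)$ only make explicit a step the paper leaves implicit.
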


\begin{proof} Our proof is a modification of the proof given in \cite{BK02}. If $u$ and $v$ are minimizers, so are $|u|$ and
$|v|$. Since $|u| > 0$ and $|v| > 0$ in $\Omega$ by Lemma \ref{positivity}, we may by
continuity assume that $u>0$ and $v >0$ from the beginning. Our claim
is that $u(x) = C v(x)$.

Normalize the functions so that
$$\int_{\Rn}u^p\,dx =  \int_{\Rn}v^p\,dx = 1$$
and consider the admissible function
$$w = \Bigl(\frac{u^p+v^p}{2}\Bigr)^{1/p}$$
in the Rayleigh quotient. Also
$$\int_{\Rn}w^p\,dx = 1$$
by construction. In the numerator we have, according to the previous
lemma,
\begin{equation}
\label{lik}
|w(y)-w(x)|^p \leq \frac{1}{2}|u(y)-u(x)|^p+
\frac{1}{2}|v(y)-v(x)|^p\end{equation}
with equality \emph{only} for
\begin{equation}
\label{likk}
u(x)v(y)=u(y)v(x).
\end{equation}
Divide by $|y-x|^{\alpha p}$, integrate, and use the normalization to
conclude that
\begin{align*}
\lambda_1 &\leq \frac{\displaystyle \int_{\Rn}\!\!
  \int_{\Rn}\frac{|w(y)-w(x)|^p}{|y-x|^{\alpha
      p}}\,dx\,dy}{\displaystyle  \int_{\Rn}w^p\,dx}\\
 &\leq \frac{\displaystyle \frac{1}{2}  \int_{\Rn}\!\!
  \int_{\Rn}\frac{|u(y)-u(x)|^p}{|y-x|^{\alpha
      p}}\,dx\,dy}{\displaystyle  \int_{\Rn}u^p\,dx}
+  \frac{\displaystyle\frac{1}{2}   \int_{\Rn}\!\!
  \int_{\Rn}\frac{|v(y)-v(x)|^p}{|y-x|^{\alpha
      p}}\,dx\,dy}{\displaystyle  \int_{\Rn}v^p\,dx}\\
& = \frac{1}{2}\lambda_1+
\frac{1}{2}\lambda_1  =\lambda_1.
\end{align*}
Thus the only possibility is that equality holds in (\ref{lik}) for $x$
and $y$ in $\Omega$. Thus (\ref{likk}) holds, which proves that $u(x)
= C v(x)$.
\end{proof}

\begin{lemma} [Exhaustion]
\label{exhaustion}
 Let 
$$\Omega_1 \subset  \Omega_2 \subset \Omega_3 \subset \cdots \subset\Omega,\qquad \Omega = \bigcup \Omega_j.$$ Then
$$\lim_{j \to \infty} \lambda_1(\Omega_j) = \lambda_1(\Omega).$$
\end{lemma}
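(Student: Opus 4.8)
The plan is to combine the monotonicity of the first eigenvalue under domain inclusion with the observation that the Gagliardo numerator in the Rayleigh quotient \eqref{ray} is a property of the test function on all of $\Rn$, unaffected by which subdomain we declare it to live in.

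First I would note that since $\Omega_j \subset \Omega_{j+1} \subset \Omega$, the monotonicity $\lambda_1(\Upsilon) \geq \lambda_1(\Omega)$ for $\Upsilon \subset \Omega$ recorded above gives the decreasing chain $\lambda_1(\Omega_1) \geq \lambda_1(\Omega_2) \geq \cdots \geq \lambda_1(\Omega)$. Hence the limit $\ell := \lim_{j\to\infty}\lambda_1(\Omega_j)$ exists and satisfies $\ell \geq \lambda_1(\Omega)$; only the opposite inequality needs an argument.

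For that, fix $\e > 0$ and, using that $\lambda_1(\Omega)$ is an infimum over $C_0^\infty(\Omega)$, pick $\phi \in C_0^\infty(\Omega)$, $\phi\not\equiv 0$, whose Rayleigh quotient is below $\lambda_1(\Omega) + \e$. Since $\spt\phi$ is compact and the open sets $\Omega_j$ increase to $\Omega$, a standard covering argument (together with the nesting) yields an index $J$ with $\spt\phi \subset \Omega_J$, so $\phi \in C_0^\infty(\Omega_j)$ for all $j \geq J$. The key point is that the numerator $\int_{\Rn}\!\int_{\Rn}|\phi(y)-\phi(x)|^p|y-x|^{-\alpha p}\,dx\,dy$ and the denominator $\int_{\Rn}|\phi|^p\,dx$ are literally the same numbers in the Rayleigh quotient for $\Omega_j$ as for $\Omega$ — both integrations run over $\Rn$ (resp. $\Rn\times\Rn$) and $\phi$ is unchanged. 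Admissibility of $\phi$ for $\Omega_j$ therefore forces $\lambda_1(\Omega_j) \leq \lambda_1(\Omega) + \e$ for all $j \geq J$, hence $\ell \leq \lambda_1(\Omega) + \e$; letting $\e \to 0$ gives $\ell \leq \lambda_1(\Omega)$, and with the first step, $\ell = \lambda_1(\Omega)$.

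I do not expect a genuine obstacle here: the only points needing a little care are the compactness argument placing $\spt\phi$ inside a single $\Omega_J$ and the remark that enlarging the domain does not change the Rayleigh quotient of a fixed compactly supported test function — a feature specific to the choice of $\Rn \times \Rn$ (rather than $\Omega \times \Omega$) as the domain of the double integral, which is exactly why that choice was emphasized earlier.
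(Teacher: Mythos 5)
Your argument is correct and essentially identical to the paper's: monotonicity of $\lambda_1$ under inclusion gives the decreasing limit, and a near-optimal $\phi\in C_0^\infty(\Omega)$ whose compact support eventually lies in some $\Omega_J$ serves as a test function for all later $\Omega_j$, giving $\lambda_1(\Omega_j)<\lambda_1(\Omega)+\e$. Your added remark that the $\Rn\times\Rn$ Rayleigh quotient of a fixed $\phi$ is unchanged when the domain is enlarged is exactly the implicit point in the paper's one-line conclusion.
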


\medskip

\begin{proof} Since $\lambda_1(\Omega_1) \geq  \lambda_1(\Omega_2)
\geq \cdots \geq \lambda_1(\Omega)$ the limit exists. Given $\varepsilon > 0$,
there exists a $\phi \in C_0^{\infty}(\Omega)$ such that
$$  \frac{\displaystyle \int_{\Rn}\!\!
  \int_{\Rn}\frac{|\phi(y)-\phi(x)|^p}{|y-x|^{\alpha
      p}}\,dx\,dy}{\displaystyle  \int_{\Rn}|\phi|^p\,dx} <
\lambda_1(\Omega) +\varepsilon,$$
because $\lambda_1(\Omega)$ is the infimum. For $j$ large enough,
$\spt(\phi) \subset \Omega_j$ and thus $\phi$ will do as test
function in the Rayleigh quotient also for the subdomain $\Omega_j$. It
follows that 
$$\lambda_1(\Omega_j) <  \lambda_1(\Omega) +\varepsilon$$
for sufficiently large $j$. \end{proof}

\medskip

Any domain $\Omega$ can be exhausted by a sequence of
\emph{smooth} domains $\Omega_j\subset\subset \Omega$. See for example \cite[p.~317-319]{Kel67}.

\begin{thm}
\label{signchange}
 Take $\alpha p>2n$. Then a non-negative eigenfunction minimizes the Rayleigh
  quotient.
\end{thm}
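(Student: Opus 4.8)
The plan is to show that a non-negative eigenfunction $u$, necessarily positive in $\Omega$ by Lemma \ref{positivity} (with $\alpha p > 2n$ the eigenfunction is continuous, so this applies), is forced to coincide with the first eigenfunction up to a constant, which in particular makes it a minimizer with eigenvalue $\lambda_1$. The natural idea is to play off the known first eigenfunction $u_1 > 0$ against $u$ using the convexity device of Theorem 9 (the strict convexity of $\ss(s,t) = |s^{1/p}-t^{1/p}|^p$), exactly as in the uniqueness proof of Theorem 10. First I would normalize $\int_{\Rn} u^p = \int_{\Rn} u_1^p = 1$ and form the competitor $w = \bigl((u^p + u_1^p)/2\bigr)^{1/p}$, which is admissible (after the usual exhaustion by smooth subdomains if one wants $C_0^\infty$ test functions; see Lemma \ref{exhaustion}) with $\int_{\Rn} w^p = 1$.

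The next step is the Rayleigh-quotient computation. Let $\lambda$ be the eigenvalue of $u$. Plugging $\phi = u$ into the weak equation \eqref{euler} gives
$$
\int_{\Rn}\!\!\int_{\Rn}\frac{|u(y)-u(x)|^p}{|y-x|^{\alpha p}}\,dx\,dy = \lambda \int_{\Rn} u^p\,dx = \lambda,
$$
so the Gagliardo seminorm of $u$ equals $\lambda$, and likewise that of $u_1$ equals $\lambda_1$. Then, using \eqref{lik} (valid with $u_1$ in place of $v$) and integrating against $|y-x|^{-\alpha p}$,
$$
\lambda_1 \;\leq\; \int_{\Rn}\!\!\int_{\Rn}\frac{|w(y)-w(x)|^p}{|y-x|^{\alpha p}}\,dx\,dy \;\leq\; \tfrac12\lambda + \tfrac12\lambda_1,
$$
which forces $\lambda \leq \lambda_1$, hence $\lambda = \lambda_1$ since $\lambda_1$ is the infimum. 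With $\lambda = \lambda_1$ all inequalities above are equalities, so in particular equality holds in \eqref{lik} for a.e.\ $(x,y) \in \Omega \times \Omega$, which by the strict-convexity clause of Theorem 9 gives $u(x)u_1(y) = u(y)u_1(x)$ a.e., i.e.\ $u = C u_1$ in $\Omega$ for a constant $C > 0$. A non-negative eigenfunction is therefore a scalar multiple of the first eigenfunction and thus minimizes the Rayleigh quotient.

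The one point that needs care — and which I expect to be the main obstacle — is the very first inequality $\lambda_1 \leq$ (Rayleigh quotient of $w$): this is only immediate for admissible test functions, whereas $w$ is merely in the fractional Sobolev space. The clean fix is the exhaustion argument: approximate $u$ and $u_1$ in $W^{s,p}$ by $C_0^\infty(\Omega_j)$ functions on smooth subdomains $\Omega_j \uparrow \Omega$, build $w_j$ from these, and pass to the limit using $\lambda_1(\Omega_j) \to \lambda_1(\Omega)$ (Lemma \ref{exhaustion}); alternatively one invokes density of $C_0^\infty(\Omega)$ in $W_0^{s,p}(\Omega)$ directly, noting that the Gagliardo seminorm and the $L^p$ norm are continuous under $W^{s,p}$ convergence. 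A secondary subtlety is knowing a priori that $\lambda$, the eigenvalue attached to $u$, is finite and that the identity obtained from testing with $\phi = u$ is legitimate; here the hypothesis $\alpha p > 2n$ guarantees $u \in C^{0,\beta}$, which together with $u \in W_0^{s,p}(\Omega)$ makes all the integrals converge. Once these technical matters are dispatched, the argument is a verbatim repetition of the Theorem 10 proof.
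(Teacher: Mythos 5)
There is a genuine gap, and it sits at the very center of your argument: the inequality chain you derive points in the wrong direction. After normalizing and forming $w=\bigl((u^p+u_1^p)/2\bigr)^{1/p}$, your estimate reads
\begin{equation*}
\lambda_1 \;\leq\; \int_{\Rn}\!\!\int_{\Rn}\frac{|w(y)-w(x)|^p}{|y-x|^{\alpha p}}\,dx\,dy \;\leq\; \tfrac12\lambda+\tfrac12\lambda_1 ,
\end{equation*}
which only yields $\lambda_1\leq\lambda$ --- a fact you already know, since testing \eqref{euler} with $\phi=u$ shows the Rayleigh quotient of $u$ equals $\lambda$, so $\lambda\geq\lambda_1$ by the definition of the infimum. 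It does not force $\lambda\leq\lambda_1$, and nothing in your argument rules out $\lambda>\lambda_1$: if $\lambda>\lambda_1$ the chain above is perfectly consistent and no equality is forced in \eqref{lik}, so the strict-convexity clause of the auxiliary lemma is never triggered. The convexity device of the uniqueness theorem works precisely because \emph{both} competitors there are minimizers, so the two ends of the chain coincide; here one competitor carries the unknown eigenvalue $\lambda$, and a pure Rayleigh-quotient comparison cannot produce the contradiction. To make any version of this convexity route work one would have to use the \emph{equation} satisfied by $u$ against test functions built from $u_1^p/u^{p-1}$ (a Picone-type argument), which is a substantially different and more delicate step than what you wrote; as it stands the proof does not go through.

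The paper's proof is structured quite differently and avoids this trap. It argues indirectly: assuming $v\geq 0$ is an eigenfunction with $\lambda>\lambda_1(\Omega)$, it uses the exhaustion lemma to pick a smooth subdomain $\Omega^*\subset\subset\Omega$ with $\lambda_1(\Omega^*)<\lambda$, takes the first eigenfunction $v^*$ of $\Omega^*$, and uses $\min_{\overline{\Omega^*}}v>0$ (from the positivity lemma) to scale $v$ so that $v\geq v^*$ outside $\Omega^*$. Writing $\varkappa=(\lambda_1(\Omega^*)/\lambda)^{1/(p-1)}<1$, the two weak equations give $\mathcal{L}_p v^*\geq\mathcal{L}_p(\varkappa v)$ in $\Omega^*$, so the comparison principle (Lemma \ref{comparison}) yields $\varkappa v\geq v^*$, and iterating gives $\varkappa^j v\geq v^*$ for all $j$, forcing $v^*\equiv 0$, a contradiction. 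This is the step your proposal is missing: an argument that actually exploits the eigenvalue equation of $u$ (here via comparison and iteration) rather than only the numerical value of its Rayleigh quotient.
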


\medskip

\begin{proof} The proof is based on a construction in \cite{OT88}; see also \cite{KL06}.

Antithesis: Assume that $v \geq 0$ is a weak solution in
$\Omega$ of the Euler-Lagrange equation (\ref{euler})
with eigenvalue $\lambda > \lambda_1(\Omega)$.

 By Theorem \ref{ca} $v$ is continuous. As $v \not \equiv 0$ we
have that $v > 0$   by Lemma \ref{positivity}. According to Lemma
\ref{exhaustion} and the remark following it, we can construct a smooth
domain $\Omega^{*} \subset \subset \Omega$ such that also 
$$ \lambda_{1}^{*} =  \lambda_{1}(\Omega^{*}) < \lambda.$$
Let $v^{*}$ denote the first eigenfunction in  $\Omega^{*}$; its
eigenvalue is $ \lambda_{1}^{*}$. Since $\alpha p > 2n$,\, $v^{*} \in C(\overline{ \Omega^{*}})$ and $v^{*} = 0$ on 
$\partial \Omega^{*}$ and in $\R^n\setminus\Omega^*$. Because $v > 0$ in $\Omega$,
$$\min_{\overline{ \Omega^{*}}} v \,> \, 0,$$
and we can arrange it so that 
$$v \geq v^{*}\quad \text{in}\quad \Rn$$
by multiplying $v$ by a suitable constant, if needed.

Let $\phi \in C_0^{\infty}(\Omega^{*}),\, \phi \geq 0$, be a test
function. Then the equations are
\begin{align*}
&\int_{\Rn}\!\!\int_{\Rn}\frac{|v^{*}(y)-v^{*}(x)|^{p-2}\bigl(v^{*}(y)-v^{*}(x)\bigr)
  \bigl(\phi(y)-\phi(x)\bigr)} {|y-x|^{\alpha
      p}}\,dx\,dy\\
& = \lambda_1^{*} \int_{\Rn}v^{*}(y)^{p-1}\phi(y)\,dy \leq
 \lambda_1^{*} \int_{\Rn}v(y)^{p-1}\phi(y)\,dy =  \lambda
 \int_{\Rn}(\varkappa v(y))^{p-1}\phi(y)\,dy\\
& =
\int_{\Rn}\!\!\int_{\Rn}\frac{|\varkappa v(y)-\varkappa
  v(x)|^{p-2}\bigl(\varkappa v(y)- \varkappa v(x)\bigr)
  \bigl(\phi(y)-\phi(x)\bigr)} {|y-x|^{\alpha
      p}}\,dx\,dy,
\end{align*}
where we have denoted
$$\varkappa = \Bigl(\frac{\lambda_1^{*}}{\lambda}\Bigr)^{1/(p-1)}\, <
\, 1.$$
Symbolically, $\mathcal{L}_pv^* \geq \mathcal{L}_p(\varkappa v)$ in
$\Omega^*$ and $\varkappa v \geq v^*$ in $\Rn \setminus \Omega$. The
Comparison Principle (Lemma \ref{comparison}) yields that
$$\varkappa v \geq v^*. \qquad(0 < \varkappa < 1)$$ 
We can repeate the procedure, now starting with the function
$\varkappa v$ in the place of $v$. This yields $\varkappa(\varkappa v)
\geq v^*$. By iteration we arrive at 
$$\varkappa^{j} v\geq v^*, \quad j = 1,2,\ldots$$
When $\varkappa^{j} \to 0$ as $j \to \infty$ we obtain the
contradiction that $v^* \equiv 0$.  \end{proof}

\section{Higher Eigenvalues}

For a fixed exponent $p$ the  set of all eigenvalues form the
\emph{spectrum} $\{\lambda \}.$ By compactness arguments \emph{the spectrum is a closed set}.
The higher eigenvalues are associated with sign-changing
eigenfunctions.
It is well-known that, for a differential operator like the ordinary
Laplacian for instance, a restriction of a higher eigenfunction to
one of its nodal domains is a \emph{first} eigenfunction with respect
to that subdomain. Then a higher eigenvalue of a domain is a first
eigenvalue for any nodal domain. This property holds for many other equations,
too. However, we encounter a new phenomenon for our operator. The
non-local nature of the problem causes the higher eigenvalues to be too large for this property to hold.

Let us begin by recalling that, given an eigenfunction, its nodal
domains are the connected open components of the sets $\{u >0\}$ and
$\{u<0\}$. In passing, we mention that also the quantities $ \lambda_1(\{u
>0\})$ and $ \lambda_1(\{u<0\})$ can be defined in the natural way,
although the open sets involved are not always connected ones.

\begin{thm} \label{toobig}If $u$ is a continuous sign changing eigenfunction with eigenvalue
  $\lambda(\Omega)$, then the strict inequalities 
$$\lambda(\Omega) > \lambda_1(\Omega^+)\quad \text{and} \quad
\lambda(\Omega) > \lambda_1(\Omega^-),$$
 hold for the  open sets   $\Omega^+ = \{u>0\}$ and $\Omega^- =
\{u<0\}$. Moreover,
$$\lambda \geq C(n,p,\alpha)\,|\Omega^+|^{-\frac{\alpha p -n}{n}}
\quad\text{and}\quad \lambda \geq  C(n,p,\alpha)\,|\Omega^-|^{-\frac{\alpha
    p -n}{n}}
.$$
\end{thm}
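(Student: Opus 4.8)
\medskip
\noindent\textbf{Proof plan.} The plan is to feed the admissible function $u^{+}$ into the Euler--Lagrange equation \eqref{euler} satisfied by $u$ and to recognise the resulting identity as forcing the Rayleigh quotient of $u^{+}$ in $\Omega^{+}$ to lie \emph{strictly} below $\lambda$. It suffices to establish the first strict inequality $\lambda(\Omega)>\lambda_1(\Omega^{+})$: the statement for $\Omega^{-}$ is obtained by running the same argument for the eigenfunction $-u$, and both of the claimed lower bounds in terms of $|\Omega^{\pm}|$ then follow at once from $\lambda\ge\lambda_1(\Omega^{\pm})$ together with the Sobolev-type estimate \eqref{sobomes} (Theorem \ref{omegaest}), which yields $\lambda_1(\Omega^{\pm})\ge C(n,p,\alpha)\,|\Omega^{\pm}|^{1-\alpha p/n}$ and $1-\tfrac{\alpha p}{n}=-\tfrac{\alpha p-n}{n}$.

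The algebraic core is the elementary pointwise inequality
$$
|b-a|^{p-2}(b-a)\bigl(b^{+}-a^{+}\bigr)\;\ge\;|b^{+}-a^{+}|^{p},\qquad a,b\in\R,
$$
with equality precisely when $ab\ge0$. When $a$ and $b$ have the same sign, or one of them vanishes, the two sides coincide by inspection; and if, say, $a>0>b$, the left-hand side equals $a(a-b)^{p-1}=a(a+|b|)^{p-1}$, which is strictly larger than $a\cdot a^{p-1}=a^{p}=|b^{+}-a^{+}|^{p}$ because $p\ge2$. The case $b>0>a$ is symmetric. I would also record the Lipschitz bound $|b^{+}-a^{+}|\le|b-a|$, which guarantees that the Gagliardo seminorm of $u^{+}$ is finite, being dominated by that of $u$ (finite since $u$ is an eigenfunction).

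Next I would justify taking $\phi=u^{+}$ in \eqref{euler}. Since $u$ is continuous and vanishes on $\partial\Omega$, each super-level set $\{u\ge\e\}$ is a compact subset of $\Omega^{+}$; truncating and mollifying $(u^{+}-\e)^{+}$ and letting $\e\to0$ produces functions of $C_0^{\infty}(\Omega^{+})\subset C_0^{\infty}(\Omega)$ converging to $u^{+}$ in $W^{s,p}(\R^{n})$, so that $u^{+}\in W^{s,p}_{0}(\Omega^{+})$ and is a legitimate competitor in the Rayleigh quotient for $\Omega^{+}$. Both sides of \eqref{euler} are continuous along this approximation: the left-hand side because $v\mapsto\iint\frac{|u(y)-u(x)|^{p-2}(u(y)-u(x))(v(y)-v(x))}{|y-x|^{\alpha p}}$ is controlled, by Hölder's inequality, through the Gagliardo seminorm of $v$, and the right-hand side because $\int|u|^{p-2}u\,v$ depends continuously on $v$ in $L^{p}$. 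With $\phi=u^{+}$ the right-hand side of \eqref{euler} is exactly $\lambda\int_{\R^{n}}(u^{+})^{p}\,dx$, since $|u|^{p-2}u\,u^{+}=(u^{+})^{p}$ pointwise. Applying the pointwise inequality to the integrand on the left, and noting that the strict case $ab<0$ occurs on $(\Omega^{+}\times\Omega^{-})\cup(\Omega^{-}\times\Omega^{+})$, a set of positive Lebesgue measure in $\R^{n}\times\R^{n}$ because $\Omega^{\pm}$ are nonempty and open, while $\iint|u^{+}(y)-u^{+}(x)|^{p}|y-x|^{-\alpha p}<\infty$, I obtain
\begin{align*}
\lambda\int_{\R^{n}}(u^{+})^{p}\,dx
&=\iint\frac{|u(y)-u(x)|^{p-2}\bigl(u(y)-u(x)\bigr)\bigl(u^{+}(y)-u^{+}(x)\bigr)}{|y-x|^{\alpha p}}\,dx\,dy\\
&>\iint\frac{|u^{+}(y)-u^{+}(x)|^{p}}{|y-x|^{\alpha p}}\,dx\,dy\;\ge\;\lambda_1(\Omega^{+})\int_{\R^{n}}(u^{+})^{p}\,dx.
\end{align*}
Dividing by $\int_{\R^{n}}(u^{+})^{p}\,dx>0$ gives $\lambda>\lambda_1(\Omega^{+})$.

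The only genuinely technical point is the approximation of $u^{+}$ by members of $C_0^{\infty}(\Omega^{+})$ in the fractional norm (equivalently, that $u^{+}\in W^{s,p}_{0}(\Omega^{+})$); everything else is the pointwise inequality and routine bookkeeping. A harmless subtlety is that the Rayleigh quotient for $\Omega^{+}$ integrates over all of $\R^{n}\times\R^{n}$, including the ``complement term'' over $\R^{n}\setminus\Omega^{+}$, but since the pointwise inequality holds for \emph{every} pair $(x,y)\in\R^{n}\times\R^{n}$ this causes no difficulty.
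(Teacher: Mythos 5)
Your argument is correct and follows essentially the same route as the paper: test the weak Euler--Lagrange equation \eqref{euler} with $\phi=u^{+}$, compare the resulting identity with the Rayleigh quotient of $u^{+}$ on $\Omega^{+}$, and invoke the Sobolev-type estimate \eqref{sobomes} for the measure bounds. The only cosmetic difference is that the paper exhibits an explicit excess term $2^{p/2}\iint \bigl(u^{+}(y)u^{-}(x)\bigr)^{p/2}|y-x|^{-\alpha p}\,dx\,dy$ to obtain strictness, whereas you deduce strictness from your pointwise inequality being strict on the positive-measure set $\Omega^{+}\times\Omega^{-}$.
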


\medskip

\begin{proof} Let $u = u^+-u^-$ be the usual decomposition where $u^+\geq0,\,u^-\geq0$.
Choose the test function $\phi = u^+$ in the Euler-Lagrange equation
(\ref{euler}). We need to have command over the sign
of the product
\begin{align*}
[u(y)&-u(x)][\phi(y)-\phi(x)]\\
&= [u^{+}(y)-u^{+}(x)]^{2}
-\bigl(u^{-}(y)-u^{-}(x)\bigr)\bigl(u^{+}(y)-u^{+}(x)\bigr)\\
&=  [u^{+}(y)-u^{+}(x)]^{2} + u^{+}(y)u^{-}(x)+u^{+}(x)u^{-}(y),
\end{align*}
where it was used that $u^{+}(x)u^{-}(x) = 0$. The Euler-Lagrange equation becomes
\begin{align*}
\lambda \int_{\Omega}|u^{+}|^p\,dx\,= &\int_{\Rn}\!\!\int_{\Rn}\frac{|u(y)-u(x)|^{p-2}\bigl(u^+(y)-u^+(x)\bigr)^{2}} {|y-x|^{\alpha
      p}}\,dx\,dy \nonumber\\
 +
2\,&\int_{\Rn}\!\!\int_{\Rn}\frac{|u(y)-u(x)|^{p-2}u^+(y)u^-(x))} {|y-x|^{\alpha
      p}}\,dx\,dy.
\end{align*}
The formula
\begin{gather*}
|u(y)-u(x)|^{2} = \bigl(u^{+}(y)-u^{+}(x)\bigr)^{2} +
\bigl(u^{-}(y)-u^{-}(x)\bigr)^{2} \\
-2 \bigl(u^{+}(y)-u^{+}(x)\bigr)\bigl(u^{-}(y)-u^{-}(x)\bigr)\\
=  \bigl(u^{+}(y)-u^{+}(x)\bigr)^{2} +
\bigl(u^{-}(y)-u^{-}(x)\bigr)^{2} +2u^{+}(y)u^{-}(x)+ 2
u^{+}(x)u^{-}(y)
\end{gather*}
implies the estimate
\begin{align*}
\lambda \int_{\Omega^{+}}|u^{+}|^p\,dx\,\geq &\int_{\Rn}\!\!\int_{\Rn}\frac{|u^{+}(y)-u^{+}(x)|^{p}} {|y-x|^{\alpha
      p}}\,dx\,dy \\
 +
2^{p/2}\,&\int_{\Rn}\!\!\int_{\Rn}\frac{\bigl(u^+(y)u^-(x)\bigr)^{\frac{p}{2}}} {|y-x|^{\alpha
      p}}\,dx\,dy.
\end{align*}
It follows that
\begin{equation}
\label{excess}
\lambda \geq \lambda_{1}(\Omega^{+}) + 2^{p/2} \frac{\displaystyle \int_{\Rn}\!\!\int_{\Rn}\frac{\bigl(u^+(y)u^-(x)\bigr)^{\frac{p}{2}}} {|y-x|^{\alpha
      p}}\,dx\,dy}{\displaystyle\int_{\Omega^{+}}|u^{+}|^p\,dx},
\end{equation}
because $u^+$ is admissible in the Rayleigh quotient as test function
for $\Omega^+$. This clearly shows that we have the \emph{strict}
inequality $\lambda > \lambda_1(\Omega^+)$.

By inequality (\ref{sobomes}) it follows immediately that
$$\lambda \int_{\Omega^{+}}|u^{+}|^p\,dx\,\geq \int_{\Rn}\!\!\int_{\Rn}\frac{|u^{+}(y)-u^{+}(x)|^{p}} {|y-x|^{\alpha
      p}}\,dx\,dy \geq C\,|\Omega^+|^{-\frac{\alpha p -
      n}{n}}\int_{\Omega^{+}}|u^{+}|^p\,dx,$$ 
and so, upon division, $\lambda \geq  C\,|\Omega^+|^{-\frac{\alpha p -
      n}{n}}$.
 ---The proof for
$\Omega^-$ is symmetric. \end{proof}

\medskip

\begin{remark} The excess term in (\ref{excess}) can be improved a
little, but it is not evident, whether one can get a bound free
of the functions $u^+$ and $u^-$.
\end{remark}

Due to the fact that higher eigenfunctions are sign-changing,
there is a gap in the spectrum just above the first eigenvalue
$\lambda_1$. Consequently, the second eigenvalue is well defined as
the number
$$\lambda_2 = \inf\{\lambda > \lambda_1\}.$$
The minimum is attained. (See \cite{Ana87} for the local case.)

\begin{thm} 
Take $\alpha p>2n$. Then the first eigenvalue is isolated.
\end{thm}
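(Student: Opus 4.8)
The plan is to argue by contradiction. Suppose the first eigenvalue $\lambda_1$ is not isolated. Then there exists a sequence of eigenfunctions $u_j$ with eigenvalues $\lambda_j \to \lambda_1$ and $\lambda_j > \lambda_1$, which by Theorem \ref{toobig} must be sign-changing. Normalize so that $\|u_j\|_{L^p(\Rn)} = 1$. Since $u_j$ is an eigenfunction with eigenvalue $\lambda_j$, the Gagliardo seminorm of $u_j$ equals $\lambda_j$, which is bounded; hence $\|u_j\|_{W^{s,p}(\Rn)}$ is bounded and, by the compact embedding (Theorem \ref{cpt}), a subsequence converges in $L^p(\Rn)$ to some $u_\infty$ with $\|u_\infty\|_{L^p} = 1$. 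A lower-semicontinuity (Fatou) argument shows $u_\infty$ is a minimizer of the Rayleigh quotient, i.e. a first eigenfunction, so by Theorem 5 (uniqueness) $u_\infty$ has one sign, say $u_\infty > 0$ in $\Omega$ after a sign choice.

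Next I would extract quantitative information from the sign change. By Theorem \ref{toobig}, for each $j$ one has $\lambda_j \ge C(n,p,\alpha)\,|\Omega_j^-|^{-\frac{\alpha p - n}{n}}$ where $\Omega_j^- = \{u_j < 0\}$ (and similarly for $\Omega_j^+$). Since $\lambda_j \to \lambda_1$ stays bounded, we get a uniform lower bound $|\Omega_j^-| \ge c > 0$ and $|\Omega_j^+| \ge c > 0$. Thus the negative parts $u_j^-$ are supported on sets of measure bounded below. On the other hand, since $u_j \to u_\infty$ in $L^p(\Rn)$ and $u_\infty$ has constant sign, one of $u_j^+$, $u_j^-$ must tend to $0$ in $L^p$. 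Say $u_j^- \to 0$ in $L^p(\Rn)$. This is the tension I want to exploit: the negative part lives on a set of measure $\ge c$, yet its $L^p$ mass vanishes.

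To close the loop I would return to the estimate (\ref{excess}) applied to $u_j^-$ (using the symmetric version for $\Omega^-$), or more directly use that $u_j^-$, suitably rescaled, is an admissible competitor for $\lambda_1(\Omega_j^-)$ while also $\lambda_1(\Omega_j^-) \le \lambda_j$. The key inequality (\ref{sobomes})/Theorem \ref{omegaest} gives
$$
\lambda_j \int_{\Omega_j^-} |u_j^-|^p\,dx \ge \int_{\Rn}\!\!\int_{\Rn}\frac{|u_j^-(y)-u_j^-(x)|^p}{|y-x|^{\alpha p}}\,dx\,dy \ge C\,|\Omega_j^-|^{-\frac{\alpha p - n}{n}}\int_{\Omega_j^-}|u_j^-|^p\,dx,
$$
so after dividing by $\int |u_j^-|^p$ (which is positive since $u_j$ changes sign) we get $\lambda_j \ge C\,|\Omega_j^-|^{-\frac{\alpha p-n}{n}}$. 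Combined with $|\Omega_j^-| \le |\Omega|$ this is only a lower bound on $\lambda_j$, not yet a contradiction; the real point is to show $|\Omega_j^-| \to 0$, which then forces $\lambda_j \to \infty$, contradicting $\lambda_j \to \lambda_1$. To see $|\Omega_j^-| \to 0$: on $\Omega_j^-$ we have $u_j = -u_j^-$, and $u_j \to u_\infty$ uniformly on compact subsets where $u_\infty > 0$ is bounded below by a positive constant (using continuity, which holds since $\alpha p > 2n$, plus an Arzel\`a--Ascoli/H\"older-embedding compactness upgrade of the $L^p$ convergence to uniform convergence). Hence $\Omega_j^-$ must eventually escape every fixed compact subset $K \Subset \Omega$, so $|\Omega_j^- \cap K| \to 0$ for every such $K$, whence $|\Omega_j^-| \to 0$.

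The main obstacle is the last step: promoting $L^p$ convergence of $u_j$ to uniform convergence so as to pin the negative set near $\partial\Omega$, and then making the measure-shrinking argument rigorous. Concretely, one needs the uniform H\"older bound on the $u_j$ (available from Theorem \ref{ca} since the $W^{s,p}$-norms are uniformly bounded and $\alpha p > 2n > n + p\cdot 0$, giving a fixed exponent $\beta$), then Arzel\`a--Ascoli gives uniform convergence $u_j \to u_\infty$ on $\overline\Omega$; since $u_\infty > 0$ on $\Omega$ and $u_\infty = 0$ only on $\partial\Omega$, for any $\e > 0$ the set $\{u_\infty \le \e\}$ shrinks to $\partial\Omega$ and has measure $\to 0$ as $\e \to 0$, and $\{u_j < 0\} \subset \{u_\infty < \e\}$ for $j$ large once $\|u_j - u_\infty\|_\infty < \e$. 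Thus $|\Omega_j^-| \to 0$, and the Sobolev-type estimate forces $\lambda_j \to \infty$, the desired contradiction. This shows no sequence of eigenvalues strictly above $\lambda_1$ can accumulate at $\lambda_1$, i.e. $\lambda_1$ is isolated.
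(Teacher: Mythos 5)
Your argument is correct, and its skeleton is the same as the paper's: assume a sequence of eigenvalues $\lambda_j>\lambda_1$ with $\lambda_j\to\lambda_1$, normalize the eigenfunctions, use the compact embedding (Theorem \ref{cpt}) and Fatou to produce a limit that is a minimizer, hence strictly one-signed in $\Omega$ by Lemma \ref{positivity}, note that each $u_j$ must change sign (this is Theorem \ref{signchange}, not Theorem \ref{toobig} as you cite, since eigenvalues above $\lambda_1$ cannot have one-signed eigenfunctions), and then play the quantitative lower bound $\lambda_j\geq C\,|\Omega_j^{\pm}|^{-(\alpha p-n)/n}$ of Theorem \ref{toobig} against the convergence to a positive function. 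Where you genuinely diverge is the endgame: the paper stays with a.e.\ convergence and argues that $\limsup\Omega^{+}_{k_j}$ and $\limsup\Omega^{-}_{k_j}$ both have positive measure, so along a subsequence the limit is $\geq 0$ on one set of positive measure and $\leq 0$ on another, contradicting strict positivity; you instead upgrade to uniform convergence via the uniform H\"older bound of Theorem \ref{ca} (legitimate here since $\alpha p>2n$ and $p$ is fixed) and Arzel\`a--Ascoli, conclude $|\Omega_j^{-}|\to 0$ directly, and contradict the measure lower bound. Your route uses slightly more regularity but is more explicit than the paper's ``selection procedure,'' which is left sketchy; the paper's route works under weaker convergence and does not need the H\"older machinery at this stage. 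Two small repairs to your write-up: the containment should be $\Omega_j^{-}\subset\{x\in\Omega:\,u_\infty(x)<\varepsilon\}$, whose measure tends to $0$ as $\varepsilon\to 0$ because $u_\infty>0$ in $\Omega$ and $|\Omega|<\infty$ --- do not argue via ``$\{u_\infty\leq\varepsilon\}$ shrinks to $\partial\Omega$,'' since for a general bounded domain $\partial\Omega$ may have positive Lebesgue measure; and the first inequality in your displayed chain is the estimate established inside the proof of Theorem \ref{toobig} (testing with $u_j^{-}$), not inequality (\ref{sobomes}) itself, though you may simply quote the conclusion of Theorem \ref{toobig} and skip the display altogether.
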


\begin{proof}
Suppose that there is a sequence of eigenvalues $\lambda_k^{'}$
tending to $\lambda_1$, $\lambda_k'\neq \lambda_1$. If $u_k$ denotes the corresponding normalized
eigenfunction, we have
$$ \int_{\Omega}|u_k|^p\,dx\,=\,1,\quad \lambda_k^{'} =  \int_{\Rn}\!\!\int_{\Rn}\frac{|u_k(y)-u_k(x)|^p}{|y-x|^{\alpha
      p}}\,dx\,dy .$$
By compactness (cf. Theorem \ref{cpt}) we can construct a subsequence and a function
$u \in W_0^{s,p}(\Omega),\, s = \alpha - n/p$, such that 
$$u_{k_{j}} \to u \quad \text{in} \quad L^p(\Rn).$$
Extracting a further subsequence we can assume that $\lim u_{k_{j}}(x)
= u(x)$ a.\,e.. By Fatou's lemma 
$$\frac{\displaystyle \int_{\Rn}\!\!\int_{\Rn}\frac{|u(y)-u(x)|^p}{|y-x|^{\alpha
      p}}\,dx\,dy}{\displaystyle \int_{\Omega}|u(x)|^p\,dx}\quad \leq\quad \lim_{j
  \to \infty}\lambda^{'}_{k_{j}}  = \lambda_1.$$
We read off that $u$ is a minimizer and therefore the first
eigenfunction. From Lemma \ref{positivity}, either $u > 0$ in $\Omega$ or  $u < 0$ in $\Omega$. But
if $ \lambda_k^{'} > \lambda_1$ then $u_k$ must change signs in $\Omega$
in view of Theorem \ref{signchange}. Both sets
$$\Omega_k^{+} = \{u_k >0\}\quad \text{and}\quad \Omega_k^{-} = \{u_k
<0\}$$
are non-empty and their measures cannot tend to zero, because small
subdomains have large eigenvalues. Indeed, by Theorem \ref{toobig}
\begin{align*}
\lambda_k^{'}&  \geq \lambda_1(\Omega_k^{+}) \geq
C\,|\Omega_k^{+}|^{1-\alpha p/n}, \\
\lambda_k^{'}&  \geq \lambda_1(\Omega_k^{-}) \geq
C\,|\Omega_k^{-}|^{1-\alpha p/n}.
\end{align*}
Both sets
$$\Omega^+ = \limsup \Omega^{+}_{k_{j}}, \quad \Omega^{-} = \limsup
\Omega^{-}_{k_{j}}, $$
have positive measure by a selection  procedure. Passing to a suitable
subsequence we can show that $u\geq 0$ in $\Omega^+$ and $u\leq 0$ in
$\Omega^-$. This is never possible for a first eigenfunction.
\end{proof}

\medskip

\medskip

\section{Passage to Infinity}\label{sec:passage}

In order to study the asymptotic case  $p\to \infty$ we fix $\alpha$
so that
$$0 < \alpha \leq 1$$
and regard $p$ as sufficiently large, say $\alpha p> 2n$. Taking the
$p^{\text{th}}$ root of the Rayleigh quotient and sending $p\to \infty$ we formally arrive at the minimization problem
\begin{equation}
\label{minimum}
\inf_{\phi}\,
\frac{\left\|\frac{\phi(y)-\phi(x)}{|y-x|^{\alpha}}\right\|_{L^{\infty}(\Rn
    \times \Rn)}}{\|\phi\|_{L^{\infty}(\Rn)} }\,=\, \inf_{\phi}\,
\frac{\left\|\frac{\phi(y)-\phi(x)}{|y-x|^{\alpha}}\right\|_{L^{\infty}(\Omega
    \times \Omega)}}{\|\phi\|_{L^{\infty}(\Omega)} }   \quad =\quad
    \Lambda_{\infty}^{\alpha},
\end{equation}
where the infimum is taken over all $\phi \in
C_0^{\infty}(\Omega)$. It will turn out that 
$$ \Lambda_{\infty}^{\alpha}= \Bigl(\frac{1}{\underset{x\in
      \Omega}{\max}\dist(x,\Rn\setminus\Omega)}\Bigr)^{\alpha},$$
so that the notation is consistent with $ \Lambda_{\infty}^{\alpha} =
\bigl(\Lambda_{\infty}\bigr)^{\alpha}$. It is clear that the infimum
is the same if all points outside $\Omega$ are ignored. \emph{The
  minimum is always attained}, but in the larger space $W^{1,\infty}_0(\Omega)$. Indeed, let $B(x_0,R)$ be the largest
open ball contained in $\Omega$. (There may be several such
balls). Then the function $$\phi(x) = [R - |x-x_0|]^+$$
solves the minimization problem and yields $ \Lambda_{\infty}^{\alpha}
= R^{-\alpha}$.  To rigorously prove the lower bound for an arbitrary competing $\phi\in C_0^\infty(\Omega)$, we notice that if $\xi$ is the closest
boundary point from $x$ then 
$$\phi(x) =  \phi(x)- \phi(\xi) = |x-\xi|^{\alpha
       } \left|\frac{\phi(x)-\phi(\xi)}{|x-\xi|^{\alpha
      }}\right| \leq  |x-\xi|^{\alpha}|[\phi]|_{\alpha} =
  \delta(x)^{\alpha}|[\phi]|_{\alpha}.$$
Recall the notation 
$$|[\phi]|_{\alpha} = 
{\left\|\frac{\phi(y)-\phi(x)}{|y-x|^{\alpha}}\right\|_{L^{\infty}(\Rn
    \times \Rn)}}, \quad \delta(x) =
\dist(x,\Rn\setminus \Omega).$$
Now $ \delta(x) \leq R$ and consequently $\|\phi\|_\infty\leq
R^{\alpha}|[\phi]|_{\alpha}$. It follows that
$$\frac{1}{R^{\alpha}} \leq \frac{|[\phi]|_{\alpha}}{\|\phi\|_\infty}  ,$$
as desired. The calculations showing that this minimum is attained can
be found in the proof of the next proposition.

Setting $\lambda_p$ equal to the
\emph{first} eigenvalue, the following limit is easy to establish.

\begin{prop}
\label{limlambda} We have
$$\lim_{p \to \infty}\sqrt [p]{\lambda_{p}} = \frac{1}{R^{\alpha}},$$
where $R = \max\{\dist(x,\R^n\setminus \Omega)\}$ is the radius of the
largest inscribed ball in the domain $\Omega$.
\end{prop}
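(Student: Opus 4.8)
The plan is to establish the limit by pinning $\sqrt[p]{\lambda_p}$ between two bounds, each of which tends to $R^{-\alpha}$. For the upper bound I would take the natural competitor $\phi(x) = [R - |x-x_0|]^+$, where $B(x_0,R)$ is a largest inscribed ball, truncate it slightly to obtain a genuine $C_0^\infty$ function (or approximate it in $W^{1,\infty}_0$ by mollification, keeping the $L^\infty$-norm and the $\alpha$-Hölder seminorm under control). Feeding this $\phi$ into the Rayleigh quotient \eqref{ray}, one needs to bound the Gagliardo seminorm: the key elementary estimate is that for a Lipschitz function one has $|\phi(y)-\phi(x)| \le |[\phi]|_\alpha |y-x|^\alpha$, so
$$
\int_{\Rn}\!\!\int_{\Rn}\frac{|\phi(y)-\phi(x)|^p}{|y-x|^{\alpha p}}\,dx\,dy \le |[\phi]|_\alpha^{\,p}\!\!\int_{\spt\phi}\!\!\int_{\Rn}\frac{dx\,dy}{|y-x|^{\alpha p - p}\,|y-x|^{?}}\cdots
$$
— more precisely one splits the integral over $|y-x|<1$ and $|y-x|>1$ as the excerpt suggests, using $|\phi(y)-\phi(x)|^p \le (2\|\phi\|_\infty)^{p}$ on the far part and the Hölder bound on the near part; since $\phi$ has bounded support, both pieces are finite and the resulting constant, raised to the power $1/p$, tends to $1$. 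Dividing by $\int |\phi|^p$ and taking $p$-th roots gives $\limsup_p \sqrt[p]{\lambda_p} \le |[\phi]|_\alpha / \|\phi\|_\infty = R^{-\alpha}$ (for the explicit cone $\phi$ one checks $|[\phi]|_\alpha = R^{1-\alpha}$-type computation, in any case the ratio is $R^{-\alpha}$ as shown in the text preceding the proposition).

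For the lower bound I would exploit Theorem \ref{omegaest} together with a Hölder-in-$p$ interpolation, or more directly argue as follows: let $u_p$ be the first eigenfunction, normalized so $\|u_p\|_{L^\infty} = 1$ (possible since $\alpha p > 2n$ forces $u_p \in C^{0,\beta}$, hence bounded, and eigenfunctions scale). Pick $x_p$ with $u_p(x_p)$ close to $1$ and let $\xi_p$ be the nearest point of $\Rn\setminus\Omega$; then $\delta(x_p) \le R$ and the chain of inequalities from the excerpt, $u_p(x_p) = u_p(x_p) - u_p(\xi_p) \le \delta(x_p)^\alpha\, |[u_p]|_\alpha \le R^\alpha |[u_p]|_\alpha$, gives $|[u_p]|_\alpha \ge R^{-\alpha}(1-\e)$. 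On the other hand, Hölder's inequality (or Jensen, using that the double integral is over a finite-measure region once one accounts for the $\alpha p > n$ integrability of $|y-x|^{-\alpha p}$ at infinity against the compactly supported $u_p$) lets one bound the $L^\infty$ Gagliardo seminorm from below by $\bigl(\lambda_p \int |u_p|^p\bigr)^{1/p}$ times a factor that behaves like $C(n,p,\alpha)^{1/p} \to 1$. Concretely: $\lambda_p = \iint |u_p(y)-u_p(x)|^p/|y-x|^{\alpha p} \le |[u_p]|_\alpha^{\,p}\, K_p$ where $K_p = \iint_{\spt} |y-x|^{-\alpha p + \alpha p}\cdots$ — again the same finite constant as above — so $\sqrt[p]{\lambda_p} \le |[u_p]|_\alpha\, K_p^{1/p}$. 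That is the wrong direction; instead one wants $\liminf$, so I would use the Sobolev estimate \eqref{sobomes}: $\lambda_p \ge C(n,p,\alpha)|\Omega|^{1-\alpha p/n}$, but that degenerates. The cleaner route for the lower bound is: from $\|u_p\|_\infty = 1$ and the pointwise inequality above, $\|u_p\|_\infty \le R^\alpha |[u_p]|_\alpha$ shows the \emph{infimum} in \eqref{minimum} is $\ge R^{-\alpha}$, and then one shows $\liminf_p \sqrt[p]{\lambda_p}$ dominates that infimum by a subsequence/weak-limit argument: extract $u_{p_j}$ converging (uniformly, by Arzelà–Ascoli, using the uniform Hölder bound that follows from $\sqrt[p_j]{\lambda_{p_j}}$ being bounded) to some $u_\infty$ with $\|u_\infty\|_\infty = 1$ and $|[u_\infty]|_\alpha \le \liminf \sqrt[p_j]{\lambda_{p_j}}$; since $u_\infty$ is admissible in \eqref{minimum}, its ratio is $\ge \Lambda_\infty^\alpha = R^{-\alpha}$, forcing $\liminf_p \sqrt[p]{\lambda_p} \ge R^{-\alpha}$.

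The main obstacle is the interplay between the $L^p$ and $L^\infty$ seminorms — i.e., making the constant $C(n,p,\alpha)^{1/p}$ (from Theorem \ref{omegaest}) and the analogous integrability constant $K_p^{1/p}$ provably tend to $1$ as $p\to\infty$. This requires tracking how $C(n,p,\alpha)$ depends on $p$; it comes from the Sobolev embedding constant in Theorem 1, and one must verify it does not decay or blow up exponentially in $p$. The standard trick is to avoid the explicit constant altogether by the weak-compactness argument sketched above for the lower bound (which only needs boundedness of $\sqrt[p]{\lambda_p}$, supplied by the upper bound and the Hölder embedding Theorem \ref{ca} giving equi-Hölder-continuity), and to use only crude, dimension-only estimates on the finite integral $\iint_{|y-x|<1} + \iint_{1<|y-x|<\diam} |y-x|^{-\alpha p}$ for the upper bound, noting $\bigl(\int_{|y-x|<1}|y-x|^{-\alpha p}\,d(y-x)\bigr)$ — wait, that diverges; the point is that $|\phi(y)-\phi(x)|^p \le |[\phi]|_\alpha^p |y-x|^{\alpha p}$ exactly cancels the singular weight, leaving $\iint_{\spt\phi \times \Rn} 1 \,dx\,dy$ on the near part, which is finite, and $(2\|\phi\|_\infty)^p \iint_{|y-x|>1} |y-x|^{-\alpha p}$ on the far part, also finite since $\alpha p > n$; the $p$-th root of $(\text{const})\cdot|[\phi]|_\alpha^p + (\text{const})\cdot\|\phi\|_\infty^p$ is, as $p\to\infty$, exactly $\max\{|[\phi]|_\alpha,\|\phi\|_\infty\}$ up to a factor $\to 1$, and with the cone $\phi$ one has $|[\phi]|_\alpha/\|\phi\|_\infty$ achieving $R^{-\alpha}$. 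I expect the write-up to be short once the two-sided squeeze is organized this way.
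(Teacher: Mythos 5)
Your strategy is the same two-sided squeeze as the paper's: test the Rayleigh quotient with the cone $[R-|x-x_0|]^{+}$ for the upper bound, and for the lower bound extract (via the H\"older embedding, Theorem \ref{ca}, and Ascoli) a uniformly convergent subsequence of normalized eigenfunctions whose limit is admissible in \eqref{minimum} and therefore has ratio at least $R^{-\alpha}$. However, two steps do not work as written. In the upper bound, your far-field bookkeeping loses the sharp constant: splitting at $|y-x|=1$ and using $|\phi(y)-\phi(x)|\le 2\|\phi\|_\infty$ there produces a term $(2\|\phi\|_\infty)^p K_p$ with $K_p^{1/p}\to 1$, so after taking $p$-th roots and dividing by $\bigl(\int|\phi|^p\bigr)^{1/p}\to\|\phi\|_\infty$ you only get $\limsup_{p\to\infty}\lambda_p^{1/p}\le\max\bigl\{|[\phi]|_\alpha/\|\phi\|_\infty,\,2\bigr\}=\max\{R^{-\alpha},2\}$, which is not the claimed bound whenever $R^{-\alpha}<2$ (a scaling check shows the trouble: the true bound scales like $R^{-\alpha}$ under dilations of $\Omega$, while a constant produced by a split at the fixed radius $1$ does not). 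The repair is standard and is what the paper implicitly does: either split at a $\phi$-dependent radius $T$ chosen so that $2\|\phi\|_\infty T^{-\alpha}\le|[\phi]|_\alpha$, so both pieces are bounded by $|[\phi]|_\alpha^{\,p}$ times a constant whose $p$-th root tends to $1$, or simply invoke $\|f\|_{L^p}\to\|f\|_{L^\infty}$ for $f(x,y)=\bigl(\phi(y)-\phi(x)\bigr)/|y-x|^\alpha$, which lies in $L^{p_0}\cap L^\infty$.

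In the lower bound, the key inequality $|[u_\infty]|_{\alpha}\le\liminf_{j}\lambda_{p_j}^{1/p_j}$ is asserted but never derived: the equi-H\"older bounds from Theorem \ref{ca} give compactness in exponents $\gamma<\alpha$ with constants that are not sharp enough to yield this, so something more is needed. The paper supplies exactly this missing step without ever touching the embedding constants: for each fixed $q$, apply Fatou's lemma and H\"older's inequality on $\Omega\times\Omega$ to get $\iint_{\Omega\times\Omega}\bigl|\bigl(u_\infty(y)-u_\infty(x)\bigr)/|y-x|^\alpha\bigr|^q\,dx\,dy\le|\Omega|^{2}\bigl(\lim_j\lambda_{p_j}^{1/p_j}\bigr)^{q}$, then take $q$-th roots and let $q\to\infty$; the factor $|\Omega|^{2/q}\to1$, and your $L^\infty$ normalization is compatible with this since $\int_\Omega|u_p|^p\le|\Omega|$. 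With these two repairs your outline coincides with the paper's proof; the concern you raise about tracking $C(n,p,\alpha)$ from Theorem \ref{omegaest} is indeed moot, since neither direction of the squeeze uses that constant.
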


\begin{proof} Let $\phi$ be a test function so that
 \begin{equation*}
\lambda_{p} \leq
\frac{\displaystyle \int_{\Rn}\!\!\int_{\Rn}\frac{|\phi(y)-\phi(x)|^p}{|y-x|^{\alpha
      p}}\,dx\,dy}{\displaystyle
\int_{\Rn}|\phi(x)|^p\,dx}.
\end{equation*}
Taking the $p^{\text{th}}$ root and letting $p \to \infty$ we obtain
the bound
$$\limsup_{p \to \infty} \lambda_{p}^{\frac{1}{p}}  \leq
\frac{\displaystyle \left \|\frac{\phi(y)-\phi(x)}{|y-x|^{\alpha
      }}\right\|_{L^{\infty}(\Rn\!\times\Rn)}} {\displaystyle
  \|\phi(x)\|_{L^{\infty}(\Rn)}}.$$
As $\phi$ we take the distance function $\delta = \delta(x) =
[R-|x-x_0|]^{+}$ for the inscribed ball, the center of which we may assume
to be $x_0 = 0$. Then $\|\phi\|_\infty = R$ and a direct computation gives
$$\left|\frac{\delta(y)-\delta(x)}{|y-x|^{\alpha}}\right| =
\frac{\bigl|\,|y|
-|x|\,\bigr|}{|y-x|^{\alpha}},$$
from which the desired upper bound follows by calculus.
  
To get the lower bound, we select an increasing sequence $p_j \to
\infty$ such that $\lim \lambda_{p_{j}}^{1/p_{j}} = \liminf
\lambda_p^{1/p}$. Let $u_{p_{j}}$ be the corresponding minimizer of
the Rayleigh quotient normalized so that
$$\int_{\Omega}u_{p_{j}}^{p_j}\,dx = 1,\quad  \lambda_{p_{j}} =
\int_{\Rn}\!\!\int_{\Rn} \left|\frac{u_{p_{j}}(y)-u_{p_{j}}(x)}{|y-x|^{\alpha
      }}\right|^{p_j}\,dx\,dy$$
By the inclusion in H\"older spaces, Theorem \ref{ca}, a subsequence converges uniformly in $\Rn$ to a function $u \in
C_0(\overline{\Omega}).$ In particular the normalization is preserved: $\|u\|_{L^{\infty}(\Omega)} =
1.$ In order to avoid an unbounded domain in H\"{o}lder's inequality below,
we integrate first only over $\Omega \times \Omega$. For a fixed
exponent $q$ Fatou's lemma and  H\"{o}lder's inequality imply
\begin{align*}
\int_{\Omega}\!\int_{\Omega}& \left|\frac{u(y)-u(x)}{|y-x|^{\alpha
      }}\right|^{q}\,dx\,dy \\
&\leq \liminf_{j\to\infty} \int_{\Omega}\!\int_{\Omega} \left|\frac{u_{p_{j}}(y)-u_{p_{j}}(x)}{|y-x|^{\alpha
      }}\right|^{q} \,dx\,dy\\
&\leq \liminf_{j\to\infty}\, |\Omega|^{2(1-\frac{q}{p_j})}\left\{\int_{\Omega}\!\int_{\Omega} \left|\frac{u_{p_{j}}(y)-u_{p_{j}}(x)}{|y-x|^{\alpha
      }}\right|^{p_j}dx\,dy\right\}^{\frac{q}{p_j}} \\
&\leq |\Omega|^{2}  \liminf_{j\to\infty} \left\{\int_{\Rn}\!\!\int_{\Rn} \left|\frac{u_{p_{j}}(y)-u_{p_{j}}(x)}{|y-x|^{\alpha
      }}\right|^{p_j}dx\,dy\right\}^{\frac{q}{p_j}}\\
& =
|\Omega|^{2}\left(\lim_{j\to\infty}  \lambda_{p_{j}}^{1/p_{j}}\right)^{q}.
\end{align*}
Taking the $q^{\text{th}}$ root of the estimate, then sending $q \to
\infty$ and recalling  the normalization, we see that the minimum is
less than $\underset{j\to\infty}{\liminf}
\lambda_p^{1/p}.$ \end{proof}

\section{The Infinity Euler-Lagrange Equation}

The minimization problem (\ref{minimum}) often has too many solutions,
because a minimizer can be rather freely modified outside the largest
inscribed ball in the domain. To eliminate the ``false solutions'' we
need the limit equation to which the Euler-Lagrange equations tend as
$p \to \infty$. The operator
\begin{equation*}
\mathcal{L}_{\infty}u\,(x) =
\underbrace{\underset{y\in\Rn}{\sup}\,\frac{u(y)-u(x)}{|y-x|^{\alpha}}}_{
 \mathcal{L}_{\infty}^{+}u\,(x)}  \, \,+\,\,\underbrace{\underset{y\in\Rn}{\inf}\,\frac{u(y)-u(x)}{|y-x|^{\alpha}}}_{
 \mathcal{L}_{\infty}^{-}u\,(x)}
\end{equation*}
is fundamental. The decomposition 
$$\mathcal{L}_{\infty}u\,(x) =  \mathcal{L}_{\infty}^{+}u\,(x) +
\mathcal{L}_{\infty}^{-}u\,(x)$$
is not the ordinary  one into positive and negative parts. For
positive solutions we will
derive the limit equation
\begin{equation}
\label{limeq}
{\displaystyle
  \max\left\{\mathcal{L}_{\infty}u\,(x),\,\mathcal{L}_{\infty}^{-}u\,(x)
    + \Lambda_{\infty}^{\alpha}u(x)\right\}\,=\,0},
\end{equation}
and for lack of a better name we refer to this equation as the \emph{$\infty$-eigenvalue equation}. This ``Euler-Lagrange equation'' has to be interpreted in the viscosity sense. The notation above indicates that at each point the
largest of two numbers is zero.

\begin{definition}\label{infeigen} We say that a non-negative function $u \in C_0(\Rn)$
  is a \emph{viscosity supersolution} of the equation
$$\max\left\{\mathcal{L}_{\infty}u\,(x),\,\mathcal{L}_{\infty}^{-}u\,(x)
    + \Lambda_{\infty}^{\alpha}u(x)\right\}\,=\,0$$
in the domain $\Omega$ if the conditions
$$\mathcal{L}_{\infty}\phi\,(x_0)\leq 0 \quad\text{and}\quad \mathcal{L}_{\infty}^{-}\phi\,(x_0)
    + \Lambda_{\infty}^{\alpha}\phi(x_0) \leq 0$$
hold, whenever the test function $\phi \in C_0^{1}(\Rn)$ touches $u$
from below at the point $x_0\in \Omega$. 

We say that $u\in C_0(\Rn)$ is a \emph{viscosity subsolution} if one of
the conditions
$$\mathcal{L}_{\infty}\psi\,(x_0)\geq 0 \quad\text{or}\quad \mathcal{L}_{\infty}^{-}\psi\,(x_0)
    + \Lambda_{\infty}^{\alpha}\psi(x_0) \geq 0$$
holds, whenever the test function
$\psi \in C_0^{1}(\Rn)$ touches $u$
from above at the point $x_0\in \Omega$.

Finally, $u$ is a \emph{viscosity solution} if it is both a viscosity
supersolution and a viscosity subsolution.

A viscosity solution $u\in C_0(\overline \Omega)$, $u>0$, is called a first $\infty$-eigenfunction.
\end{definition}

We consider an arbitrary sequence of first eigenvalues $\lambda_p$
with $p \to \infty$ and denote the corresponding eigenfunction by
$u_p$. The limit procedure requires the following lemma.

\begin{lemma}[Positivity]
\label{pos}
Let $v \in C_0(\Rn)$ be a viscosity supersolution of the
equation $\mathcal{L}_{\infty}v = 0$ in $\Omega$. If $v\geq 0$ in $\R^n$,  then, either $v>0$
in $\Omega$ or $v \equiv 0$.
\end{lemma}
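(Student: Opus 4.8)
The plan is to argue by a strong-maximum-principle type argument using the viscosity supersolution property directly. Suppose $v \geq 0$ is a viscosity supersolution of $\mathcal{L}_\infty v = 0$ in $\Omega$ and that $v(x_0) = 0$ at some point $x_0 \in \Omega$. I want to deduce $v \equiv 0$. The key observation is that at such a zero $x_0$, the constant function $\varphi \equiv 0$ touches $v$ from below (after the usual cutoff to make it $C_0^1$, which is harmless since modifications of the test function away from $x_0$ do not affect the relevant sup/inf — or rather one uses that any $\varphi \le v$ with $\varphi(x_0)=0$ works). Then the supersolution condition gives $\mathcal{L}_\infty \varphi(x_0) \le 0$, i.e.
$$
\sup_{y \in \Rn}\frac{\varphi(y)-\varphi(x_0)}{|y-x_0|^\alpha} + \inf_{y\in\Rn}\frac{\varphi(y)-\varphi(x_0)}{|y-x_0|^\alpha} \le 0.
$$

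First I would make this precise: because $v$ is continuous, $v \ge 0$, and $v(x_0)=0$, one can choose legitimate test functions. Given any point $z$ with $v(z) > 0$, the idea is to build a smooth $\varphi \in C_0^1(\Rn)$ with $0 \le \varphi \le v$, $\varphi(x_0) = 0 = v(x_0)$, and $\varphi(z) > 0$; this is possible by continuity of $v$ exactly as in the proof of Lemma \ref{positivity}. For such $\varphi$, the supersolution inequality at $x_0$ reads
$$
\sup_{y}\frac{\varphi(y)}{|y-x_0|^\alpha} + \inf_{y}\frac{\varphi(y)}{|y-x_0|^\alpha} \le 0,
$$
using $\varphi(x_0) = 0$. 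But $\varphi \ge 0$ forces the infimum to be $\ge 0$, hence $\inf_y \varphi(y)/|y-x_0|^\alpha = 0$, and then the inequality forces $\sup_y \varphi(y)/|y-x_0|^\alpha \le 0$, i.e. $\varphi \equiv 0$. This contradicts $\varphi(z)>0$. Therefore $v$ has no zeros in $\Omega$ unless it is identically zero, and combined with $v \ge 0$ we conclude $v > 0$ in $\Omega$ or $v \equiv 0$.

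The main subtlety — and the step I would be most careful with — is the legitimacy of using (essentially) the zero function as a test function in the viscosity sense when $v$ itself vanishes at $x_0$: one must exhibit an actual admissible $\varphi \in C_0^1(\Rn)$ touching from below, not just the literal constant $0$ (which is not compactly supported, though Remark \ref{mono}-type reasoning shows constants are harmless). The clean way is to fix a point $z$ where $v(z)>0$, pick a small ball around $z$ where $v \ge c > 0$, and take $\varphi$ to be a nonnegative $C^\infty$ bump supported in that ball with $\varphi \le v$; then $\varphi(x_0) = 0 = v(x_0)$ holds trivially since $x_0$ lies outside the support. Plugging this $\varphi$ into the supersolution condition for $\mathcal{L}_\infty$ at $x_0$ gives the contradiction above. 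The rest is routine; this argument is the exact analogue, in the $p=\infty$ setting, of the proof of Lemma \ref{positivity}, with $\mathcal{L}_p$ replaced by $\mathcal{L}_\infty$ and the role of $\int |\psi(y)|^{p-2}\psi(y)/|y-x_0|^{\alpha p}\,dy \le 0$ played by $\sup_y \varphi(y)/|y-x_0|^\alpha + \inf_y \varphi(y)/|y-x_0|^\alpha \le 0$.
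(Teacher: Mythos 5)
Your proof is correct and follows essentially the same route as the paper: at a zero $x_0$ of $v$ one tests with a nonnegative $C_0^1$ function $\varphi\le v$, $\varphi(x_0)=0$, notes that $\varphi\ge0$ forces the infimum term to vanish so the supersolution inequality forces $\sup_y \varphi(y)/|y-x_0|^\alpha\le 0$, i.e.\ $\varphi\equiv0$, and then, exactly as in the finite-$p$ positivity lemma, the existence of a nontrivial such $\varphi$ when $v\not\equiv0$ gives the contradiction. Your extra care about constructing an admissible bump supported near a point where $v>0$ is precisely the step the paper handles by citing the proof of Lemma \ref{positivity}.
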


\begin{proof} The concept means that $\mathcal{L}_{\infty}\phi\,(x_0)
\leq 0$ for all test functions touching $v$ from below at a given
point $x_0$ in $\Omega$. Assume now that $v(x_0) = 0$ at some
point. Then there is certainly a test function such that $0 \leq \phi
\leq v$ and $\phi(x_0) = v(x_0) = 0.$ Hence
\begin{align*}
0 \geq \mathcal{L}_{\infty}\phi\,(x_0) =
\max_{\R^n}\,\frac{\phi(y)}{|x_0-y|^{\alpha}}\, +\, \min_{\R^n}\,\frac{\phi(y)}{|x_0-y|^{\alpha}}\geq 
\,\frac{\phi(y)}{|x_0-y|^{\alpha}},
\end{align*}
which implies that $\phi \equiv 0$. As in the proof of Lemma
\ref{positivity} we conclude that $v \equiv 0$. \end{proof}

As we know $\sqrt[p]{\Lambda_p} \to
\Lambda_{\infty}^{\alpha}$, by Proposition \ref{limlambda}. For the eigenfunctions we have to go to subsequences.

\begin{thm}
\label{convergence} There exists a subsequence of functions $u_p$
 converging uniformly in
$\Omega$ to a function $u \in C_0(\overline{\Omega})$ which is a
viscosity solution in $\Omega$ of the $\infty$-eigenvalue equation
(\ref{limeq}).
\end{thm}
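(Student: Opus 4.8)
The plan is to extract a uniformly convergent subsequence of the first eigenfunctions $u_p$ using the Hölder bounds already at hand, and then pass to the limit in the viscosity formulation. First, normalize so that $\|u_p\|_{L^\infty(\Omega)} = 1$. Since each $u_p$ is an eigenfunction, the Rayleigh quotient identity gives $\lambda_p = \int\!\!\int |(u_p(y)-u_p(x))/|y-x|^\alpha|^p\,dx\,dy$, and combined with Theorem~\ref{ca} (Hölder embedding) one controls $|[u_p]|_{\beta_p}$ with $\beta_p = \alpha - 2n/p \to \alpha$. A standard diagonal/Arzelà--Ascoli argument yields a subsequence $u_{p_j} \to u$ uniformly in $\Rn$ with $u \in C_0(\overline\Omega)$, $u \geq 0$, $\|u\|_{L^\infty(\Omega)} = 1$; in particular $u \not\equiv 0$. (One should also record that $|[u]|_\alpha \leq \Lambda_\infty^\alpha$, obtained by taking $q$th roots and $q\to\infty$ as in Proposition~\ref{limlambda}, so $u$ is also a minimizer of (\ref{minimum}); this will be needed to identify the eigenvalue as $\Lambda_\infty^\alpha$ rather than some other constant.)

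Next, the viscosity-solution verification. For the \emph{supersolution} property: suppose $\phi \in C_0^1(\Rn)$ touches $u$ strictly from below at $x_0 \in \Omega$. By uniform convergence, $\phi$ (suitably perturbed by a small quadratic bump, or using a strict touch) is touched from below by $u_{p_j}$ at a point $x_j \to x_0$. Since $u_{p_j}$ is a viscosity supersolution of $\mathcal{L}_{p_j}u_{p_j} + \lambda_{p_j}|u_{p_j}|^{p_j-2}u_{p_j} = 0$ (by the Proposition on page~\pageref{pvisc}, translated to the $\infty$-setting), we get $\mathcal{L}_{p_j}\phi\,(x_j) \leq -\lambda_{p_j}\phi(x_j)^{p_j-1}$. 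The key is the asymptotic analysis of the scaled operator: writing out
$$\mathcal{L}_{p}\phi\,(x) = 2\int_{\Rn}\frac{|\phi(y)-\phi(x)|^{p-2}(\phi(y)-\phi(x))}{|y-x|^{\alpha p}}\,dy,$$
one shows that $(\tfrac12|\mathcal{L}_p\phi\,(x)|)^{1/(p-1)}$ is comparable to $\max\{ \mathcal{L}_\infty^+\phi\,(x),\,-\mathcal{L}_\infty^-\phi\,(x)\}$ in the limit, with the \emph{sign} of $\mathcal{L}_p\phi$ determined by whether the positive part $\mathcal{L}_\infty^+\phi$ or the negative part $-\mathcal{L}_\infty^-\phi$ dominates — exactly the mechanism producing the $\max$ in (\ref{limeq}). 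Concretely: if $\mathcal{L}_\infty^+\phi\,(x_0) > 0 \geq \mathcal{L}_\infty^+\phi\,(x_0) + \mathcal{L}_\infty^-\phi\,(x_0)$ would already violate $\mathcal{L}_\infty\phi\,(x_0)\leq 0$, so one argues that the finite-$p$ inequality forces $\mathcal{L}_\infty\phi\,(x_0)\leq 0$; and separately, taking $(p-1)$st roots of $\tfrac12|\mathcal{L}_p\phi\,(x_j)| \leq \lambda_{p_j}\phi(x_j)^{p_j-1}$ and passing to the limit (using $\lambda_{p_j}^{1/(p_j-1)} \to \Lambda_\infty^\alpha$) yields $-\mathcal{L}_\infty^-\phi\,(x_0) \leq \Lambda_\infty^\alpha\phi(x_0)$, i.e. $\mathcal{L}_\infty^-\phi\,(x_0) + \Lambda_\infty^\alpha\phi(x_0) \geq 0$ — wait, this is the subsolution direction; so in the supersolution case one instead derives $\mathcal{L}_\infty^-\phi\,(x_0)+\Lambda_\infty^\alpha\phi(x_0)\leq 0$ from the correct sign in the inequality. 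The subsolution property is symmetric: test functions touching from above, $u_{p_j}$ as viscosity subsolutions, and the ``one of the conditions holds'' alternative in Definition~\ref{infeigen} comes out naturally because the reverse finite-$p$ inequality, after rooting, gives \emph{at least one} of the two limiting inequalities in the $\geq 0$ direction (one cannot conclude both, which is precisely why the subsolution definition is a disjunction). Finally, Lemma~\ref{pos} (Positivity) upgrades $u \geq 0$, $u\not\equiv 0$ to $u > 0$ in $\Omega$, so $u$ is a genuine first $\infty$-eigenfunction.

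The main obstacle is the passage to the limit in the nonlinear, nonlocal operator $\mathcal{L}_{p}$: one must show that the rescaled quantity $\bigl(\tfrac12|\mathcal{L}_p\phi\,(x_j)|\bigr)^{1/(p_j-1)}$ converges to the right combination of $\mathcal{L}_\infty^\pm\phi\,(x_0)$, and this requires (i) controlling the tail of the integral $\int_{|y-x|>1}$, which converges since $\alpha p > n$, uniformly in $p$; (ii) a careful Laplace-type / $L^p\to L^\infty$ asymptotic for the singular part $\int_{|y-x|<1}$, identifying which of sup and inf of $(\phi(y)-\phi(x))/|y-x|^\alpha$ gets amplified as the exponent grows; and (iii) matching the sign of $\mathcal{L}_p\phi$ to the dominant side so the $\max$-structure of (\ref{limeq}) emerges. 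A delicate point is that $\phi \in C_0^1$ need not be Hölder-$\alpha$-smooth at the level needed for $\mathcal{L}_p\phi\,(x)$ to even be finite near the singularity when $\alpha p$ is close to $n+p$; this is why the hypothesis $\alpha p < n+p-1$ (Lipschitz test functions suffice) is invoked earlier, and one should use it here to guarantee $\mathcal{L}_p\phi\,(x)$ is continuous, so that the contact point $x_j$ may be handled and the limit taken pointwise.
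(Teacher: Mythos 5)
Your compactness step is essentially the paper's: the eigenvalue identity bounds the Gagliardo seminorm uniformly in $p$, the H\"older embedding (Theorem \ref{ca}) plus Ascoli give a subsequence converging uniformly to some $u\in C_0(\overline\Omega)$, and transferring the touching point ($x_j\to x_0$, adding constants, Remark \ref{mono}) is standard. The gap is in the limit passage itself, which is the heart of the theorem. You propose to analyze $\bigl(\tfrac12|\mathcal{L}_{p}\phi\,(x_j)|\bigr)^{1/(p_j-1)}$, but the absolute value mixes the two competing contributions and loses exactly the information needed. The correct device is to split the integral into its positive and negative parts, writing $\mathcal{L}_{p_j}\phi\,(x_j)=A_j^{p_j-1}-B_j^{p_j-1}$ with
$$A_j^{p_j-1}=2\int_{\Rn}\frac{|\phi(y)-\phi(x_j)|^{p_j-2}\bigl(\phi(y)-\phi(x_j)\bigr)^{+}}{|y-x_j|^{\alpha p_j}}\,dy,$$
$B_j$ defined analogously with the negative part, and $C_j^{p_j-1}=\lambda_{p_j}u_j^{p_j-1}(x_j)$; one then uses the limits $A_j\to\mathcal{L}_\infty^{+}\phi\,(x_0)$, $B_j\to-\mathcal{L}_\infty^{-}\phi\,(x_0)$ (Lemma 6.5 of \cite{CLM11}) and $C_j\to\Lambda_\infty^{\alpha}\phi(x_0)$ (Proposition \ref{limlambda}). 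In the supersolution case the finite-$p$ inequality reads $A_j^{p_j-1}+C_j^{p_j-1}\leq B_j^{p_j-1}$, and dropping either $A_j$ or $C_j$ yields the two required inequalities $\mathcal{L}_\infty\phi\,(x_0)\leq0$ and $\mathcal{L}_\infty^{-}\phi\,(x_0)+\Lambda_\infty^{\alpha}\phi(x_0)\leq0$. A supersolution must satisfy both; your sketch reaches only the second (after a sign correction you yourself flag with ``wait''), and your argument for $\mathcal{L}_\infty\phi\,(x_0)\leq0$ is circular as written.

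For the subsolution you assert that ``after rooting, at least one of the two inequalities comes out,'' but you never show which or why. The actual mechanism is a dichotomy: either $\mathcal{L}_\infty^{-}\phi\,(x_0)+\Lambda_\infty^{\alpha}\phi(x_0)\geq0$ and there is nothing to prove, or else $-\mathcal{L}_\infty^{-}\phi\,(x_0)>\Lambda_\infty^{\alpha}\phi(x_0)>0$; here one needs $\phi(x_0)=u(x_0)>0$, which comes from Lemma \ref{pos} applied to the already-established supersolution property, so the order of the two halves of the proof matters (you invoke Lemma \ref{pos} only as an afterthought at the end). In the second case $B_j>0$ for large $j$, and dividing the reversed inequality $A_j^{p_j-1}+C_j^{p_j-1}\geq B_j^{p_j-1}$ by $B_j^{p_j-1}$, the term $(C_j/B_j)^{p_j-1}$ vanishes because $C_j/B_j\to\Lambda_\infty^{\alpha}\phi(x_0)/\bigl(-\mathcal{L}_\infty^{-}\phi\,(x_0)\bigr)<1$, giving $\mathcal{L}_\infty^{+}\phi\,(x_0)\geq-\mathcal{L}_\infty^{-}\phi\,(x_0)$, i.e.\ $\mathcal{L}_\infty\phi\,(x_0)\geq0$. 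Without the $A_j,B_j,C_j$ decomposition, the convergence of the positive and negative parts, and this case analysis, the ``max-structure'' of \eqref{limeq} is only described, not derived; as it stands the proposal does not prove the theorem.
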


\begin{proof} If we normalize the functions so that $\|u_p\|_{L^{p}} =
1$, then for $sp = \alpha p - n$
$$\|u_p\|_{W^{s,p}(\R^n)} \leq C(1+\sqrt[p]{\lambda_p}).$$ Since
$\sqrt[p]{\lambda_p }\to R^{-\alpha}$ we have a bound independent of
$p$. For an arbitrary $\gamma\in (0,\alpha)$, we have a bound on the
H\"{o}lder norms $\|u_p\|_{C^{\gamma}(\Rn)}$
for large $p$'s, according to Theorem \ref{ca}. By Ascoli's theorem
we can extract a subsequence $u_j = u_{p_{j}}$ that converges
uniformly in each $C^{\gamma}(\Rn)$ to a function $u$. It follows that
$u \in C_0(\overline{\Omega})$ and $u = 0$ in $\Rn\setminus \Omega$. 

\emph{Viscosity Supersolution.} In order to prove that the limit
function is a viscosity supersolution in $\Omega$, we assume that
$\phi$ is a test function touching $u$ from below at a point $x_0$. We
may assume that the touching is strict by
considering $\phi(x) - |x|^2\eta(x)$, where $\eta \in
C_0^{\infty}(\Rn)$ is a function such that $\eta = 1$ in a
neighbourhood of $x_0$ and $\eta \geq 0$.
We can assure that $u_j -\phi$ assumes its minimum at points $x_j \to
x_0$. This is standard reasoning. By adding a suitable constant $c_j$
 we can arrange it so that $\phi + c_j$ touches $u_j$ from below at
 the point $x_j$. Recall that the constant has no influence in the testing procedure according
 to Remark \ref{mono}.

Since an eigenfunction is a viscosity solution, we have the inequality
$$\mathcal{L}_{p_j}\phi\,(x_j) + \lambda_{p_{j}}u_j^{p_j-1}(x_j) \leq 0$$
and writing
\begin{align*}
A_j^{p_{j}-1}& = 2\,\int_{\Rn}\frac{|\phi(y)-\phi(x_j)|^{p_{j}-2}\bigl(\phi(y)-\phi(x_j)\bigr)^{+}}{|y-x_j|^{\alpha p_j}}\,dy,\\
B_j^{p_{j}-1}& =
2\int_{\Rn}\frac{|\phi(y)-\phi(x_j)|^{p_{j}-2}\bigl(\phi(y)-\phi(x_j)\bigr)^{-}}{|y-x_j|^{\alpha p_j}}\,dy,\\
C_j^{p_{j}-1}& = \lambda_{p_{j}}u_j^{p_j-1}(x_j),
\end{align*}
we get the abbreviated form
\begin{equation}
\label{abc}
A_j^{p_{j}-1} + C_j^{p_{j}-1} \leq B_j^{p_{j}-1}.
\end{equation}
According to \cite[Lemma 6.5]{CLM11} and Proposition \ref{limlambda}
$$A_j \to \mathcal{L}_{\infty}^{+}\phi\,(x_0), \quad
B_j \to  -\,\mathcal{L}_{\infty}^{-}\phi\,(x_0), \quad
C_j \to \Lambda_{\infty}^{\alpha} \phi(x_0).   $$
By dropping either $ A_j^{p_{j}-1}$ or $ C_j^{p_{j}-1} $ in
(\ref{abc}) and sending $j \to \infty$, we see that
\begin{enumerate}
\item $\mathcal{L}_{\infty}^{+}\phi\,(x_0) \leq
  -\,\mathcal{L}_{\infty}^{-}\phi\,(x_0)$, which is equivalent to
  $\mathcal{L}_{\infty}\phi\,(x_0) \leq 0$,
\item $\Lambda_{\infty}^{\alpha} \phi(x_0) \leq
  -\,\mathcal{L}_{\infty}^{-}\phi\,(x_0)$,  which is equivalent to  $\Lambda_{\infty}^{\alpha} \phi(x_0) +
  \mathcal{L}_{\infty}^{-}\phi\,(x_0)\leq 0$.
\end{enumerate}
This proves that we have a viscosity supersolution.

\emph{Viscosity subsolution.} This time the test function $\phi$ is
touching $u$ strictly from above at the point $x_0$. Now we get the
reversed inequality
\begin{equation*}
A_j^{p_{j}-1} + C_j^{p_{j}-1} \geq B_j^{p_{j}-1}.
\end{equation*}
We  now know that $\phi(x_0) > 0$ by Lemma \ref{pos}, since we already have
proved that $u$ is a viscosity supersolution ($\mathcal{L}_{\infty}u
\leq 0$).
 If  $\mathcal{L}_{\infty}^{-}\phi\,(x_0) +
\Lambda_{\infty}^{\alpha} \phi(x_0) \geq  0$, then the desired
inequality
$$\max\,\{\mathcal{L}_{\infty}\phi\,(x_0),\,\mathcal{L}_{\infty}^{-}\phi\,(x_0)+\Lambda_{\infty}^{\alpha}\phi(x_0)\}
\geq 0$$
follows immediately. The possibility that
  $-\,\mathcal{L}_{\infty}^{-}\phi\,(x_0) >
\Lambda_{\infty}^{\alpha} \phi(x_0) > 0$ remains. Then $B_j > 0$ for large
indices. We divide by  $B_j$ to obtain
$$\frac{ C_j^{p_{j}-1}}{B_j^{p_{j}-1}} + \frac{
  A_j^{p_{j}-1}}{B_j^{p_{j}-1}} \geq 1$$
and it follows that
$$
\frac{\mathcal{L}_{\infty}^{+}\phi\,(x_0)}{-\,\mathcal{L}_{\infty}^{-}\phi\,(x_0)}
\geq 1,\quad
\mathcal{L}_{\infty}^{+}\phi\,(x_0)\geq -\,\mathcal{L}_{\infty}^{-}\phi\,(x_0).$$
Thus $\mathcal{L}_{\infty}\phi\,(x_0)\geq 0.$ Again the desired
inequality holds. This proves that we have a viscosity subsolution.
\end{proof}

\section{Pointwise Behaviour}
Recall that the $\infty$-eigenvalue equation  was formulated for test functions. As we will see, a part of it, namely
\begin{equation*}
\Lmi u\,(x)+\ll u\,(x)\leq 0
\end{equation*}
holds pointwise in $\Omega$. This simplifies the investigations.

We need the auxiliary function $|x-x_0|^\alpha$, which acts as a fundamental solution. However it has to be truncated.
\begin{lemma}
\label{kon}
Let $\alpha <1$. The truncated ``$\alpha$-cone function'' 
$$C_{x_0,R}(x) =   \min\{ |x-x_0|^{\alpha},\,R^{\alpha}\}$$
satisfies the strict inequality $\mathcal{L}_{\infty}{C}_{x_0,R} \,(x)< 0$ at every
point \\$x\in B_R(x_0)\setminus\{x_0\}$.
\end{lemma}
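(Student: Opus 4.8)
The plan is to evaluate the two halves $\mathcal{L}_\infty^{+}$ and $\mathcal{L}_\infty^{-}$ of the operator separately and then add. After translating so that $x_0=0$, write $C=C_{0,R}$ and fix a point $x$ with $0<r:=|x|<R$, so that $C(x)=r^\alpha$. The only non-elementary ingredient will be the subadditivity of $t\mapsto t^\alpha$: since for $0<\alpha<1$ this function is strictly concave and vanishes at $0$, one has $(a+b)^\alpha\le a^\alpha+b^\alpha$ for all $a,b\ge 0$, with strict inequality whenever $a,b>0$.

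First I would handle the infimum. The choice $y=0$ gives $\dfrac{C(0)-C(x)}{|0-x|^\alpha}=\dfrac{-r^\alpha}{r^\alpha}=-1$, so $\mathcal{L}_\infty^{-}C(x)\le -1$. For the reverse bound let $y\neq x$ be arbitrary. If $|y|\ge r$ then $C(y)\ge r^\alpha$ and the difference quotient is $\ge 0$. If $|y|<r$ then $C(y)=|y|^\alpha$, and the triangle inequality gives $|y-x|\ge r-|y|>0$; since the numerator $|y|^\alpha-r^\alpha$ is negative,
$$\frac{C(y)-C(x)}{|y-x|^\alpha}\ \ge\ \frac{|y|^\alpha-r^\alpha}{(r-|y|)^\alpha}\ \ge\ -1,$$
the last step being exactly $r^\alpha\le |y|^\alpha+(r-|y|)^\alpha$. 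Hence $\mathcal{L}_\infty^{-}C(x)=-1$.

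Next I would show that $\mathcal{L}_\infty^{+}C(x)=\dfrac{R^\alpha-r^\alpha}{(R-r)^\alpha}$, the value being attained at $y^{*}=R\,x/|x|$ (for which $|y^{*}-x|=R-r$). This is the step where the truncation at level $R^\alpha$ is used essentially. For the upper bound fix $y\neq x$; if $C(y)\le r^\alpha$ the quotient is $\le 0$, so assume $C(y)>r^\alpha$, which forces $|y|>r$ and hence $|y-x|\ge |y|-r$. If $r<|y|\le R$, then $C(y)=|y|^\alpha$ and the quotient is at most $g(|y|)$ where $g(\rho):=\dfrac{\rho^\alpha-r^\alpha}{(\rho-r)^\alpha}$; substituting $\rho=rs$ with $s>1$ one computes $\dfrac{d}{ds}\log g(rs)=\dfrac{\alpha\,(1-s^{\alpha-1})}{(s^\alpha-1)(s-1)}>0$, so $g$ is increasing on $(r,\infty)$ and $g(|y|)\le g(R)$. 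If instead $|y|>R$, then $C(y)=R^\alpha$ and $|y-x|\ge |y|-r>R-r$, so the quotient is strictly below $g(R)$. Taking the supremum over $y$ gives the claim.

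Adding the two pieces,
$$\mathcal{L}_\infty C(x)=\frac{R^\alpha-r^\alpha}{(R-r)^\alpha}-1<0,$$
because $(R-r)^\alpha+r^\alpha>R^\alpha$ strictly — the strict subadditivity applied with $a=R-r>0$ and $b=r>0$, which is legitimate precisely because $x\in B_R(x_0)\setminus\{x_0\}$. This is the asserted inequality. The main (indeed the only) obstacle is the monotonicity of $g$ on $(r,\infty)$; everything else reduces to the triangle inequality and the concavity of $t\mapsto t^\alpha$, and the point of truncating is exactly that it pins $\mathcal{L}_\infty^{+}C(x)$ strictly below $1=-\mathcal{L}_\infty^{-}C(x)$.
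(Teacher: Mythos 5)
Your proof is correct, and it handles the key part of the lemma differently from the paper. Both arguments split $\mathcal{L}_\infty$ into $\mathcal{L}_\infty^{+}+\mathcal{L}_\infty^{-}$ and get $\mathcal{L}_\infty^{-}C(x)\leq -1$ from the single test point $y=x_0$; the difference is in the treatment of $\mathcal{L}_\infty^{+}$. The paper never computes the supremum: it bounds each individual difference quotient strictly below $1$ by the pointwise strict subadditivity $|y-x_0|^\alpha<|x-x_0|^\alpha+|y-x|^\alpha$, and disposes of the only delicate region by noting that the quotient tends to $0$ as $y\to x$ (since $\alpha<1$), so that the supremum itself stays below $1$. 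You instead compute $\mathcal{L}_\infty^{+}C(x)$ exactly: you show the supremum is attained at the near point $y^{*}=x_0+R(x-x_0)/|x-x_0|$ of $\partial B_R(x_0)$, which requires the auxiliary monotonicity of $g(\rho)=(\rho^\alpha-r^\alpha)/(\rho-r)^\alpha$ (your logarithmic-derivative computation is correct), and you invoke strict subadditivity only once at the end, with $a=R-r$, $b=r$, to get $g(R)<1$. Your route costs an extra calculus lemma but buys the exact value $\mathcal{L}_\infty C(x)=\frac{R^\alpha-r^\alpha}{(R-r)^\alpha}-1$ and sidesteps the (minor) issue, present in the paper's version, of why a supremum of quantities each strictly below $1$ remains strictly below $1$ — in your argument the supremum is explicitly attained, whereas the paper relies on the vanishing of the quotient near $y=x$ together with the obvious decay at infinity coming from the truncation.
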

\begin{proof}
The following estimate holds for $\Lmi$:
$$
\Lmi C_{x_0,R}\,(x)\leq \frac{C_{x_0,R}(x_0)-C_{x_0,R}(x)}{|x_0-x|^\alpha}=-1.
$$ 
In order to estimate $\Lpi$ we first remark that, since $\alpha<1$,
$$
\frac{C_{x_0,R}(y)-C_{x_0,R}(x)}{|y-x|^\alpha}\to 0, \quad \textup{as  $y\to x$}.
$$
For $x\neq y$
$$
\frac{C_{x_0,R}(y)-C_{x_0,R}(x)}{|y-x|^\alpha}\leq \frac{|y-x_0|^\alpha-|x-x_0|^\alpha}{|y-x|^\alpha}<1,
$$
where we have used the inequality
$$
C_{x_0,R}(y)\leq |y-x_0|^\alpha=|x-x_0+y-x|^\alpha<|x-x_0|^\alpha+|y-x|^\alpha,
$$
which is strict when $\alpha\in (0,1)$, $x\neq x_0$ and $y\neq x$. Hence
$$
\Lpi C_{x_0,R}\,(x)<1,
$$
and the result follows.
\end{proof}
When $\alpha=1$ the cone needs to be adjusted in order to become a strict supersolution.
\begin{lemma}\label{kon1} Let $\alpha=1$. The truncated Lipschitz cone
$$
C_{x_0,R}(x)= \min\{ |x-x_0|-\e|x-x_0|^2,\,R-\e R^2\},
$$
with $\e R<1$ satisfies $\Lmi C_{x_0,R}\, (x)<0$ at every point $x\in B_R(x_0)\setminus\{x_0\}$.
\end{lemma}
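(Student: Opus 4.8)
The plan is to imitate the first half of the proof of Lemma \ref{kon}, where $\Lmi$ was bounded by testing the defining infimum against the single competitor $y=x_0$. The pleasant feature here is that this one competitor already forces $\Lmi C_{x_0,R}\,(x)<0$, so nothing needs to be said about $\Lpi$ --- which is precisely why the statement for $\alpha=1$ is weaker than Lemma \ref{kon}: when $\alpha=1$ the unmodified cone $|x-x_0|$ satisfies $\Lpi|x-x_0|\,(x)=1$ exactly, and controlling $\Lpi$ of the $\e$-modified cone would be genuinely delicate. For the pointwise inequality $\Lmi u + \Lambda_{\infty}^{\alpha} u\leq 0$ that this cone is built to serve as a barrier for, only the $\Lmi$ half is needed.

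First I would record the two elementary facts about $C=C_{x_0,R}$ that follow from $\e R<1$. Since $R-\e R^2 = R(1-\e R)>0$, the minimum defining $C(x_0)$ is attained at the first entry, so $C(x_0)=0$; and for $x$ with $0<|x-x_0|<R$ one has $|x-x_0|-\e|x-x_0|^2 = |x-x_0|\,(1-\e|x-x_0|)\geq |x-x_0|\,(1-\e R)>0$ as well as $R-\e R^2>0$, so $C(x)$, being the minimum of two strictly positive numbers, is strictly positive on $B_R(x_0)\setminus\{x_0\}$. (No monotonicity of $r\mapsto r-\e r^2$ on $[0,R]$ is used; only the value at $r=0$ and the sign on $(0,R)$ enter.) The key --- and essentially final --- step is then the one-competitor estimate: for $x\in B_R(x_0)\setminus\{x_0\}$,
$$\Lmi C_{x_0,R}\,(x)\;=\;\inf_{y\in\Rn}\frac{C_{x_0,R}(y)-C_{x_0,R}(x)}{|y-x|}\;\leq\;\frac{C_{x_0,R}(x_0)-C_{x_0,R}(x)}{|x-x_0|}\;=\;\frac{-\,C_{x_0,R}(x)}{|x-x_0|}\;<\;0,$$
the last inequality being exactly the positivity just established. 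This is the assertion.

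I do not foresee a real obstacle; the only care required is to invoke nothing beyond $\e R<1$ and to resist the temptation to also prove $\mathcal{L}_{\infty}C_{x_0,R}<0$. That stronger statement (the exact analogue of Lemma \ref{kon}) would be the hard part: it seems to require the sharper restriction $\e R\leq 1/2$ together with a careful, direction-by-direction analysis of $\Lpi$ of the modified cone, and it is deliberately avoided because the pointwise bound this cone is needed for only uses $\Lmi$.
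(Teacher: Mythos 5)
Your proof is correct and is essentially the paper's own argument: the paper simply says the computation is the same as for $\alpha<1$, namely testing the infimum defining $\Lmi C_{x_0,R}\,(x)$ with the single competitor $y=x_0$, which gives $\Lmi C_{x_0,R}\,(x)\leq -C_{x_0,R}(x)/|x-x_0|<0$ once one notes $C_{x_0,R}(x_0)=0$ and $C_{x_0,R}>0$ on $B_R(x_0)\setminus\{x_0\}$, both consequences of $\e R<1$ exactly as you verify.
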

\begin{proof} The computation is the same as when $\alpha <1$.
\end{proof}

\begin{lemma}\label{Lminus} If $u\in C_0(\R^n)$ is a viscosity supersolution of the equation $\Li u=0$ in an open set $D$ where $u>0$, and if $u\leq 0$ in $\R^n\setminus D$, then 
$$
\Lmi u\,(x)=\inf_{y\in \R^n\setminus D}\frac{u(y)-u(x)}{|x-y|^\alpha}.
$$
In other words, the infimum is attained in the complement of $D$ and thus $\Lmi u$ is continuous in $D$.
\end{lemma}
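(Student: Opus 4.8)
Here is the approach I would take.

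The plan is to reduce the statement, for a fixed $x\in D$, to showing that the ``down‑cone''
$$W(y):=u(x)+m'\,|x-y|^\alpha,\qquad m':=\inf_{y\in\Rn\setminus D}\frac{u(y)-u(x)}{|x-y|^\alpha},$$
lies below $u$ on all of $\Rn$. First one records that $-\infty<m'<0$: indeed $u(x)>0$ while $u\le 0$ on $\Rn\setminus D$, and $\dist(x,\Rn\setminus D)>0$ with $u$ bounded; moreover the quotient tends to $0$ at infinity while $m'<0$ and $\Rn\setminus D$ is closed, so the infimum is attained at some $y_1\in\Rn\setminus D$. Since $\Lmi u\,(x)=\inf_{\Rn}(\cdots)\le m'$ trivially, and since $u\ge W$ on $\Rn\setminus D$ by the very definition of $m'$, once $u\ge W$ on $\Rn$ is established we get $\Lmi u\,(x)=m'$, which is exactly the asserted formula together with the attainment in the complement. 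The continuity of $\Lmi u$ in $D$ is then routine: for $x$ in a compact subset of $D$ one has $\dist(x,\Rn\setminus D)\ge c>0$ and only the $y$ with $|x-y|$ bounded matter (elsewhere the quotient is close to $0>\Lmi u\,(x)$), so $\Lmi u$ is locally an infimum of an equicontinuous family over a fixed compact set of $y$'s.

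Suppose, towards a contradiction, that $W(z)>u(z)$ for some $z\in D$. I slide the cone downward: for $s\ge 0$ put $W_s:=W-s$. Since $u\in C_0(\Rn)$ the set $D$ is bounded, so $W_s\to-\infty$ uniformly on $\overline D$, while $W_s\le W\le u$ on $\Rn\setminus D$; hence $W_s\le u$ on $\Rn$ for large $s$, and $\{s\ge 0:W_s\le u\text{ on }\Rn\}=[s^*,\infty)$ with $s^*=\max_{\Rn}(W-u)>0$ (positive since $W-u>0$ at $z$). Pick $y_0$ with $W_{s^*}(y_0)=u(y_0)$. Then $y_0\ne x$ (as $W_{s^*}(x)=u(x)-s^*<u(x)$) and $y_0\notin\Rn\setminus D$ (there $W_{s^*}\le W\le u$ with slack $\ge s^*$); thus $y_0\in D$. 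Now $W_{s^*}$ is smooth away from its apex $x$ and touches $u$ from below at $y_0$; since $W_{s^*}(x)=u(x)-s^*<u(x)$ there is room to round off the apex by a $C^1$ cap squeezed between $W_{s^*}$ and $u$ which keeps the value $u(x)-s^*$ at $x$, and since $W_{s^*}$ is very negative far out while $u$ is bounded there is room to truncate and bring the function to $0$ with compact support. A routine (if tedious) such modification yields $\varphi\in C_0^1(\Rn)$ with $\varphi\le u$ on $\Rn$, $\varphi=W_{s^*}$ near $y_0$ (so $\varphi$ touches $u$ from below at $y_0\in D$), $\varphi(x)=u(x)-s^*$, and $\varphi\ge W_{s^*}$ on $\Rn$.

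The contradiction comes from evaluating $\Li\varphi$ at $y_0$ against the supersolution inequality $\Li\varphi\,(y_0)\le 0$. Testing the supremum with $y=x$, and using $\varphi(x)-\varphi(y_0)=(u(x)-s^*)-\bigl((u(x)-s^*)+m'|x-y_0|^\alpha\bigr)=-m'|x-y_0|^\alpha$,
$$\Lpi\varphi\,(y_0)\ \ge\ \frac{\varphi(x)-\varphi(y_0)}{|x-y_0|^\alpha}\ =\ -m'\ =\ |m'|.$$
For the infimum, $\varphi\ge W_{s^*}$ and $\varphi(y_0)=W_{s^*}(y_0)$ give, for every $w$,
$$\frac{\varphi(w)-\varphi(y_0)}{|y_0-w|^\alpha}\ \ge\ m'\,\frac{|x-w|^\alpha-|x-y_0|^\alpha}{|y_0-w|^\alpha}\ \ge\ m',$$
the last step because $|y_0-w|\ge\bigl||x-w|-|x-y_0|\bigr|$ and $t\mapsto t^\alpha$ is subadditive, so $|x-w|^\alpha-|x-y_0|^\alpha\le|y_0-w|^\alpha$. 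Hence $\Lmi\varphi\,(y_0)\ge m'$ and $\Li\varphi\,(y_0)\ge 0$. It remains to make this strict. For $\alpha<1$ the subadditivity of $t\mapsto t^\alpha$ is \emph{strict} (as $|x-y_0|>0$), so on the compact part of $\Rn$ where $\varphi$ still equals $W_{s^*}$ the quotient $\frac{|x-w|^\alpha-|x-y_0|^\alpha}{|y_0-w|^\alpha}$ is continuous and $<1$, hence bounded away from $1$, and on the truncated part it is bounded away from $1$ as well by a direct computation again using strict subadditivity; therefore $\Lmi\varphi\,(y_0)>-|m'|$ and $\Li\varphi\,(y_0)>0$. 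For $\alpha=1$ the bare cone gives only $\Li\varphi\,(y_0)\ge 0$, so one argues exactly as in Lemma~\ref{kon1}: replacing $|x-\cdot|$ throughout by the adjusted cone $|x-\cdot|-\e|x-\cdot|^2$ (with $\e$ small, so the cone still stays below $u$ after truncation) tilts the two one–sided operators and yields $\Li\varphi\,(y_0)\ge|m'|\,\e\,|x-y_0|>0$. Either way this contradicts $\Li\varphi\,(y_0)\le0$, so $W\le u$ on $\Rn$, as needed.

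The heart of the matter — and the main obstacle — is this last quantitative step: the untruncated inverted cone $y\mapsto|x-y|^\alpha$ is $\Li$–harmonic away from its apex ($\Lpi$ and $\Lmi$ of it being $+1$ and $-1$), so touching $u$ from below with a cone only yields $\Li\varphi\,(y_0)\ge0$; the strict deficit must be manufactured, either from strict subadditivity of $t^\alpha$ combined with the far–field truncation (when $\alpha<1$) or from the quadratic correction in the cone (when $\alpha=1$). The rest — producing the slid–down contact point $y_0\in D$ and fattening the cone into an admissible $C_0^1$ test function — is standard viscosity bookkeeping.
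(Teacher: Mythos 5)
Your overall strategy is the paper's: compare $u$ from below with the cone of slope $m'=L^-_x$ apexed at $x$, and use that such a cone (truncated, resp.\ adjusted when $\alpha=1$) is a strict subsolution of $\Li$ to force a contradiction with the supersolution property of $u$ at an interior contact point. The paper does this by taking $w=u(x)+L^-_x C_{x,R}$ with $C_{x,R}$ from Lemma \ref{kon}/\ref{kon1} and invoking the comparison principle of \cite{CLM11}; you implement the comparison by hand (sliding, contact point $y_0\in D$, a hand-built test function), and the strictness you extract from strict subadditivity of $t^\alpha$ plus truncation is exactly the mechanism behind Lemma \ref{kon}. For $\alpha<1$ your argument is essentially correct, with one local misstatement: on the region outside the support of $\varphi$ the ratio $\bigl(|x-w|^\alpha-|x-y_0|^\alpha\bigr)/|y_0-w|^\alpha$ is \emph{not} bounded away from $1$ (it tends to $1$ as $|w|\to\infty$), so strict subadditivity is not the right reason there; the correct one, which you already have at hand, is that there $\varphi(w)=0$ while $\varphi(y_0)=u(y_0)>0$, so the quotient equals $-u(y_0)/|y_0-w|^\alpha$ and exceeds $m'$ once the support radius is chosen large enough. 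With that repair the decomposition (compact region where the ratio is uniformly $<1$, far region where the quotient is near $0$) gives $\Lmi\varphi\,(y_0)>m'$ as claimed.

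The genuine gap is your treatment of $\alpha=1$. Since $m'<0$, replacing $|x-\cdot|$ by $|x-\cdot|-\e|x-\cdot|^2$ moves the cone \emph{up}, not down: $m'(|x-y|-\e|x-y|^2)>m'|x-y|$ for $y\neq x$. Hence your parenthetical claim that the adjusted cone ``still stays below $u$'' is false for every $\e>0$: at a minimizer $y_1\in\Rn\setminus D$ of the defining quotient (whose existence you yourself proved) one has $u(y_1)=u(x)+m'|x-y_1|<u(x)+m'\bigl(|x-y_1|-\e|x-y_1|^2\bigr)$. Consequently the slid adjusted cone may first touch $u$ at a point of $\Rn\setminus D$, where no equation is available, and the contradiction disappears. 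The standard repair is to steepen the slope along with the adjustment, e.g.\ compare with $u(x)+\tfrac{m'}{1-\e R}\min\{|x-y|-\e|x-y|^2,\ R-\e R^2\}$, which on $B_R(x)$ lies below $u(x)+m'|x-y|$ (and whose capped value $u(x)+m'R$ lies below $\inf u$ once $R$ is large), run your sliding argument for each small $\e$, and let $\e\to0$ at the end to recover the inequality with slope $m'$. (The same point is implicit, and treated tersely, in the paper's own proof: its condition $u\geq w$ on $\Rn\setminus D$ requires exactly this kind of adjustment when $\alpha=1$, and an $R$ larger than merely $D\subset B_R(x)$.)
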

\begin{remark} In general, $\Lpi u$ is not continuous.
\end{remark}
\begin{proof}
Take $x\in D$ and define
$$
L_x^-=\inf_{y\in \R^n\setminus D}\frac{u(y)-u(x)}{|y-x|^\alpha}.
$$
By the hypothesis, $L_x^-<0$. Let
$$
w(y)=u(x)+L_x^-C_{x,R}(y), 
$$
where $C_{x,R}$ is as in Lemma \ref{kon} or Lemma \ref{kon1} with $R$ chosen so that $D\subset B_R(x)$. We now claim that $u\geq w$ in $D$, which implies the lemma. In order to use the comparison principle in the open set $D\setminus \{x\}$ we see that
\begin{enumerate}
\item $\Li w>0$ in $D\setminus \{x\}$ from Lemma \ref{kon},
\item $\Li u\leq 0$ in $D$,
\item $u\geq w$ in $\R^n\setminus D$,
\item $u(x)=w(x)$.
\end{enumerate}
By the comparison principle in \cite{CLM11}, $u\geq w$. Indeed, if there
is $x_0\in D\setminus \{x\}$ such that $u(x_0)<w(x_0)$ then, for a
suitable constant $C$, $w-C$ touches $u$ from below at $x_0$,
contradicting (1) above. (Remark \ref{mono} is valid also for $p = \infty$.)
\end{proof}
As a consequence any viscosity supersolution is locally $\alpha$-H\"older continuous.
\begin{cor}\label{alphaholder} Under the hypotheses in Lemma
  \ref{Lminus} $u$ is locally $\alpha$-H\"older continuous in $D$. So
  is, in particular, a first $\infty$-eigenfunction.
\end{cor}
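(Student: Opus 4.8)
The plan is to exploit the comparison with the truncated cone that was already carried out inside the proof of Lemma \ref{Lminus}. Fix a point $x_0\in D$ and set $r=\tfrac12\dist(x_0,\partial D)$. For any $x\in B_r(x_0)$ we want a bound of the form $|u(x)-u(x_0)|\le M|x-x_0|^\alpha$ with $M$ depending only on $\|u\|_\infty$ and $\dist(x_0,\partial D)$. First I would obtain the one-sided estimate $u(x)-u(x_0)\le M|x-x_0|^\alpha$: this follows from $\Li u\le 0$ in $D$ together with the fact, established in Lemma \ref{Lminus}, that $\Lmi u\,(x_0)$ is attained in $\R^n\setminus D$ and is therefore bounded below by $-2\|u\|_\infty/\dist(x_0,\partial D)^\alpha$. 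Indeed $\Li u\,(x_0)\le 0$ gives
$$
\sup_{y}\frac{u(y)-u(x_0)}{|y-x_0|^\alpha}\le -\Lmi u\,(x_0)\le \frac{2\|u\|_\infty}{\dist(x_0,\partial D)^\alpha},
$$
and taking $y=x$ yields the upper bound with $M=2\|u\|_\infty\,\dist(x_0,\partial D)^{-\alpha}$.

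For the reverse inequality I would run the cone-comparison argument from Lemma \ref{Lminus} but localized: choose $R$ with $B_R(x_0)\supset D$, let $L=L_{x_0}^-=\inf_{y\in\R^n\setminus D}\frac{u(y)-u(x_0)}{|y-x_0|^\alpha}$, which satisfies $|L|\le 2\|u\|_\infty/\dist(x_0,\partial D)^\alpha$, and form $w(y)=u(x_0)+L\,C_{x_0,R}(y)$ (using Lemma \ref{kon} if $\alpha<1$ or Lemma \ref{kon1} if $\alpha=1$). As shown there, $\Li w>0$ in $D\setminus\{x_0\}$, $\Li u\le 0$ in $D$, $u\ge w$ on $\R^n\setminus D$ and $u(x_0)=w(x_0)$, so the comparison principle of \cite{CLM11} gives $u\ge w$ in $D$. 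For $x\in B_r(x_0)$ with $|x-x_0|\le R$ this reads
$$
u(x)\ge u(x_0)+L\,|x-x_0|^\alpha\ge u(x_0)-\frac{2\|u\|_\infty}{\dist(x_0,\partial D)^\alpha}\,|x-x_0|^\alpha ,
$$
which is exactly the missing lower bound with the same constant $M$. Combining the two one-sided estimates yields $|u(x)-u(x_0)|\le M|x-x_0|^\alpha$ for all $x,x_0$ in a fixed compact subset of $D$, i.e.\ $u$ is locally $\alpha$-H\"older continuous.

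Finally, a first $\infty$-eigenfunction $u\in C_0(\overline\Omega)$ with $u>0$ is, by Definition \ref{infeigen}, a viscosity supersolution of $\max\{\Li u,\ \Lmi u+\ll u\}=0$ in $\Omega$; in particular $\Li u\le 0$ holds in the sense of test functions, and $u>0$ in $\Omega$ with $u\equiv 0$ outside $\Omega$. Thus the hypotheses of Lemma \ref{Lminus} are met with $D=\Omega$, and the previous paragraph applies. I expect the only delicate point to be the careful bookkeeping of which constant enters the cone and verifying that the comparison principle of \cite{CLM11} applies on the punctured open set $D\setminus\{x_0\}$ — but this is precisely the situation already handled in the proof of Lemma \ref{Lminus}, so no new difficulty arises.
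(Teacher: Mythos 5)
Your second paragraph is the right argument, and it is exactly what the paper has in mind (the corollary is stated as an immediate consequence of the cone comparison carried out in the proof of Lemma \ref{Lminus}). The problem is your first step. You deduce the upper bound $u(x)-u(x_0)\le M|x-x_0|^\alpha$ from the claim that $\Li u\,(x_0)\le 0$ holds \emph{pointwise} for $u$ itself, i.e.\ that $\sup_y\frac{u(y)-u(x_0)}{|y-x_0|^\alpha}\le -\Lmi u\,(x_0)$. But $u$ is only a viscosity supersolution: the inequality $\Li\varphi\,(x_0)\le 0$ is available for $C^1$ test functions touching $u$ from below, not for $u$ itself, and a priori the pointwise quantity $\Lpi u\,(x_0)$ could even be $+\infty$ (continuity alone does not control it). The paper is careful about precisely this distinction: even the pointwise inequality $\Lmi u+\ll u\le 0$ is only obtained later, in Proposition \ref{Lmipw} together with a density-plus-continuity argument, and nothing in Lemma \ref{Lminus} upgrades the viscosity inequality $\Li u\le 0$ to a pointwise one. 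So this step, as justified, does not stand.

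Fortunately the gap is easy to close with what you already have, and the repaired proof is the paper's: the cone comparison gives, for every center $z$ in a compact subset $K\subset D$ and every $x\in D$,
$$
u(x)\;\ge\; u(z)+L_z^-\,|x-z|^\alpha,\qquad |L_z^-|\le \frac{2\|u\|_\infty}{\dist(z,\partial D)^\alpha},
$$
and applying this once with $z=x_0$ and once with the roles of $x$ and $x_0$ interchanged (both points in $K$, where $\dist(\cdot,\partial D)$ is bounded below) yields \emph{both} one-sided bounds, hence $|u(x)-u(x_0)|\le M_K|x-x_0|^\alpha$. No pointwise control of $\Lpi u$ is needed anywhere. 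Your final paragraph, checking that a first $\infty$-eigenfunction satisfies the hypotheses of Lemma \ref{Lminus} with $D=\Omega$, is correct.
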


\begin{prop}\label{Lpw} Suppose that $u\in C_0(\R^n)$ is a viscosity solution of the $\infty$-eigenvalue equation \eqref{limeq} in an open set $D$. In addition, assume $u> 0$ in $D$ and $u\leq 0$ in $\R^n\setminus D$. If $\Lmi u\,(x_0)+\ll u(x_0)<0$ at some $x_0\in D$ in the pointwise sense, then $\Li u\,(x_0)=0$ in the viscosity sense.
\end{prop}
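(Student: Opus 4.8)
The statement that $\Li u(x_0)=0$ in the viscosity sense consists of two inequalities, and the supersolution part — that $\Li\phi(x_0)\le 0$ for every $\phi\in C_0^1(\Rn)$ touching $u$ from below at $x_0$ — is free: by Definition \ref{infeigen} a viscosity supersolution of \eqref{limeq} in particular satisfies $\Li\phi(x_0)\le 0$ for such $\phi$. So the whole content is the subsolution part, and the plan is to prove it by contradiction: suppose $\psi\in C_0^1(\Rn)$ touches $u$ from above at $x_0$ with $\Li\psi(x_0)<0$, and extract a contradiction from the pointwise hypothesis.

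Since $u$ is in particular a viscosity supersolution of $\Li u=0$ in $D$, Lemma \ref{Lminus} applies and the pointwise infimum $\Lmi u(x_0)$ is attained in the complement: there is $y^*\in\Rn\setminus D$, so $|y^*-x_0|\ge\dist(x_0,\Rn\setminus D)>0$, with $\Lmi u(x_0)=(u(y^*)-u(x_0))/|y^*-x_0|^\alpha$. Put $\tau:=u(x_0)-\ll u(x_0)\,|y^*-x_0|^\alpha$; multiplying through by $|y^*-x_0|^\alpha>0$, the hypothesis $\Lmi u(x_0)+\ll u(x_0)<0$ says exactly that $u(y^*)<\tau$. Note that the naive idea of comparing $\Lmi\psi(x_0)$ with $\Lmi u(x_0)$ directly is doomed: $\psi\ge u$ with equality at $x_0$ forces $\Lmi\psi(x_0)\ge\Lmi u(x_0)$, which is the wrong direction. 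The remedy is to replace $\psi$ by a lower test function that is small at the far point $y^*$; this is possible precisely because $y^*\notin D$ while the contact point $x_0\in D$, so a modification near $y^*$ leaves the value at $x_0$ undisturbed.

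Concretely, I would produce $\tilde\psi\in C_0^1(\Rn)$ with $u\le\tilde\psi\le\psi$, $\tilde\psi(x_0)=u(x_0)$ and $\tilde\psi(y^*)<\tau$. If $\psi(y^*)<\tau$ already, take $\tilde\psi=\psi$. Otherwise pick $m$ with $u(y^*)<m<\tau\le\psi(y^*)$ and $r>0$ small enough that $x_0\notin\overline{B(y^*,2r)}$ and, by continuity of $u$ and of $\psi$, that $u<m<\psi$ on $\overline{B(y^*,2r)}$; then with a cutoff $\theta\in C_0^\infty(\Rn)$, $0\le\theta\le1$, $\theta\equiv1$ on $B(y^*,r)$ and $\theta\equiv0$ off $B(y^*,2r)$, set $\tilde\psi=(1-\theta)\psi+\theta m$. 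On $B(y^*,2r)$ this is a convex combination of functions that exceed $u$ and are dominated by $\psi$, so $u\le\tilde\psi\le\psi$ there; elsewhere $\tilde\psi=\psi$; and $\theta(x_0)=0$, $\theta(y^*)=1$ give $\tilde\psi(x_0)=u(x_0)$ and $\tilde\psi(y^*)=m<\tau$. This cutoff construction is the one technical point, and it is the main obstacle — recognizing that one must abandon the given $\psi$ and push it down near $y^*$ while keeping it between $u$ and $\psi$.

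To finish: since $\tilde\psi\le\psi$ and $\tilde\psi(x_0)=\psi(x_0)$, the monotonicity of $\Li$ (Remark \ref{mono}, which is valid for $p=\infty$) gives $\Li\tilde\psi(x_0)\le\Li\psi(x_0)<0$. As $\tilde\psi$ touches $u$ from above at $x_0\in D$, the subsolution part of \eqref{limeq} forces $\Li\tilde\psi(x_0)\ge 0$ or $\Lmi\tilde\psi(x_0)+\ll\tilde\psi(x_0)\ge 0$; the first is excluded, so the second holds. But choosing $y=y^*$ in the infimum defining $\Lmi\tilde\psi(x_0)$ and using $\tilde\psi(x_0)=u(x_0)$ and $\tilde\psi(y^*)<\tau$,
$$\Lmi\tilde\psi(x_0)+\ll\tilde\psi(x_0)\;\le\;\frac{\tilde\psi(y^*)-u(x_0)}{|y^*-x_0|^\alpha}+\ll u(x_0)\;<\;\frac{\tau-u(x_0)}{|y^*-x_0|^\alpha}+\ll u(x_0)\;=\;0,$$
a contradiction. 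Hence $\Li\psi(x_0)\ge 0$ for every admissible $\psi$, and together with the supersolution inequality this is the assertion that $\Li u(x_0)=0$ in the viscosity sense.
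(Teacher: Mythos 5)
Your proof is correct and follows essentially the same route as the paper: both use Lemma \ref{Lminus} to locate the point $y^*\in\Rn\setminus D$ where the infimum for $u$ is attained, squeeze an intermediate test function between $u$ and the given one that is pulled down near $y^*$ (your explicit cutoff is exactly the function $\varphi_0$ the paper posits), use the dichotomy in the subsolution definition to force $\Li\geq 0$ for the intermediate function, and transfer to the original test function via the monotonicity of Remark \ref{mono}. Your contradiction framing, as opposed to the paper's direct argument, is only a cosmetic difference.
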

\begin{proof} Since we already know that $\Li u\leq 0$ in the viscosity sense, it remains only to prove that $\Li u\,(x_0)\geq 0$. Assume
$$
-\e_0=\Lmi u\,(x_0)+\ll u(x_0)<0
$$
and pick $y_0\in \R^n\setminus D$ such that 
$$
\Lmi u\,(x_0)=\frac{u(y_0)-u(x_0)}{|y_0-x_0|^\alpha}.
$$
This is possible due to Lemma \ref{Lminus}.

Let $\varphi\in C_0^1(\R^n)$ be a function touching $ u$ from above at $x_0$ and choose $\varphi_0\in C_0^1(\R^n)$ so that $\varphi\geq \varphi_0\geq u$ and
$$
\frac{\varphi_0(y_0)-u(y_0)}{|y_0-x_0|^\alpha}<\e_0.
$$
Then
\begin{align}
\Lmi  \varphi_0\,(x_0)+\ll \varphi_0(x_0)&\leq \frac{\varphi_0(y_0)-\varphi_0(x_0)}{|y_0-x_0|^\alpha}+\ll \varphi_0(x_0)\nonumber \\
&=\frac{u(y_0)-u(x_0)+\varphi_0(y_0)-u(y_0)}{|y_0-x_0|^\alpha}+\ll u(x_0)\label{Lphi0}\\
&< -\e_0+\e_0=0.\nonumber
\end{align}
But on the other hand, $\varphi_0$ touches $u$ from above at $x_0$. Hence, \eqref{Lphi0} implies $\Li \varphi_0\, (x_0)\geq 0$. Since $\varphi$ touches $\varphi_0$ from above at $x_0$, the monotonicity of $\Li$ (cf. Remark \ref{mono}) implies $\Li \varphi\, (x_0)\geq 0$.
\end{proof}

\begin{prop} Let $\alpha<1$. \label{Lmipw} Suppose that $u\in C_0(\R^n)$ is a viscosity supersolution of \eqref{limeq} in $D$, $u>0$ in $D$, and $u\leq 0$ in $\R^n\setminus D$. If there is  a $\varphi\in C_0^1(\R^n)$ touching $u$ from below at $x_0\in D$, then $\Lmi u\, (x_0)+\ll u(x_0)\leq 0$ in the pointwise sense.
\end{prop}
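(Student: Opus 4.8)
The plan is to read the pointwise inequality off the viscosity supersolution condition, after testing $u$ at $x_0$ with a sequence of $C^1$ functions that approximate, from below at $x_0$, the extremal ``barrier'' of $\Lmi$. Write $L^-:=\Lmi u\,(x_0)=\inf_{y\in\R^n}\frac{u(y)-u(x_0)}{|y-x_0|^\alpha}$. Since $u(x_0)>0$ while $u\le 0$ off $D$, any $y\in\R^n\setminus D$ at bounded distance from $x_0$ gives a negative quotient, so $L^-<0$; the given $\varphi$ touching $u$ from below at $x_0$ forces $L^->-\infty$. For $k\in\mathbb{N}$ put $c_k:=\tfrac1k-L^->0$ and
$$
B_k(y):=u(x_0)-c_k|y-x_0|^\alpha .
$$
By the very definition of $L^-$ we have $u(y)-B_k(y)\ge\tfrac1k|y-x_0|^\alpha$, so $B_k\le u$ on $\R^n$, strictly off $x_0$, and $B_k(x_0)=u(x_0)$.

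The heart of the matter is to replace $B_k$ by an admissible test function $\varphi_k\in C_0^1(\R^n)$ with $B_k\le\varphi_k\le u$ and $\varphi_k\equiv\varphi$ near $x_0$. A convenient smoothing of $B_k$ is
$$
\widetilde B_{k,\sigma}(y):=u(x_0)-c_k\Bigl(\bigl(|y-x_0|^2+\sigma^2\bigr)^{\alpha/2}-\sigma^\alpha\Bigr),
$$
which is $C^\infty$, equals $u(x_0)$ at $x_0$, satisfies $\widetilde B_{k,\sigma}\ge B_k$ (because $(a^2+b^2)^{\alpha/2}\le a^\alpha+b^\alpha$ for $0<\alpha\le1$), and tends to $B_k$ locally uniformly as $\sigma\to 0$. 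Since $\alpha<1$ and $\varphi\in C^1$ with $\varphi(x_0)=u(x_0)$, a local Lipschitz bound $L$ for $\varphi$ gives $\varphi(y)-u(x_0)\ge-L|y-x_0|\ge-c_k|y-x_0|^\alpha$ on a small ball $B_\rho(x_0)$, i.e. $\varphi\ge B_k$ there; this is the only use of the hypothesis $\alpha<1$. Off $x_0$ the gap $u-B_k$ is continuous, strictly positive, and $\to+\infty$ at infinity, hence bounded below by a positive constant on every $\{|y-x_0|\ge\rho\}$, so $\widetilde B_{k,\sigma}\le u$ on $\{|y-x_0|\ge\rho/2\}$ once $\sigma$ is small. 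One then sets
$$
\varphi_k:=\chi_R\bigl(\chi\,\varphi+(1-\chi)\,\widetilde B_{k,\sigma}\bigr),
$$
with $\chi$ a cut-off equal to $1$ on $B_{\rho/2}(x_0)$ and $0$ off $B_\rho(x_0)$, and $\chi_R$ a cut-off equal to $1$ on a large ball $B_R(x_0)\supset\operatorname{supp}\varphi$ (outside of which $u\ge0$ and $\widetilde B_{k,\sigma}<0$) and $0$ off $B_{2R}(x_0)$. On each region $\varphi_k(y)$ is a convex combination of numbers lying in $[B_k(y),u(y)]$ — using $\varphi\ge B_k$ near $x_0$, $\widetilde B_{k,\sigma}\ge B_k$ everywhere, and $\widetilde B_{k,\sigma}\le0\le u$ far out — so $B_k\le\varphi_k\le u$; moreover $\varphi_k\equiv\varphi$ on $B_{\rho/2}(x_0)$, so $\varphi_k(x_0)=u(x_0)$ and $\varphi_k$ touches $u$ from below at $x_0$.

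The conclusion is then immediate. From $\varphi_k\ge B_k$ we get $\frac{\varphi_k(y)-u(x_0)}{|y-x_0|^\alpha}\ge-c_k=L^--\tfrac1k$ for $y\ne x_0$, hence $\Lmi\varphi_k\,(x_0)\ge L^--\tfrac1k$; and $\varphi_k\le u$ with $\varphi_k(x_0)=u(x_0)$ gives $\Lmi\varphi_k\,(x_0)\le L^-$. The viscosity supersolution condition for the test function $\varphi_k$ (cf. Definition \ref{infeigen}) reads $\Lmi\varphi_k\,(x_0)+\ll\varphi_k(x_0)\le0$, that is, $\Lmi\varphi_k\,(x_0)+\ll u(x_0)\le0$. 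Therefore
$$
L^-+\ll u(x_0)\ \le\ \Lmi\varphi_k\,(x_0)+\tfrac1k+\ll u(x_0)\ \le\ \tfrac1k ,
$$
and letting $k\to\infty$ yields $\Lmi u\,(x_0)+\ll u(x_0)\le0$.

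The main obstacle is the construction in the second paragraph: one needs a single $C^1$, compactly supported function lying globally below $u$ and globally above the cusped barrier $B_k$. All the delicate behaviour is concentrated at $x_0$, where $B_k$ has an inverted $\alpha$-cusp — which is exactly why $\alpha<1$ is needed (a $C^1$ graph can out-drop $|y-x_0|^\alpha$ but not $|y-x_0|$) and exactly where the hypothesis is genuinely used: the existence of \emph{some} $C^1$ function touching $u$ from below at $x_0$ rules out a downward $\alpha$-cusp of $u$ at $x_0$ and thereby leaves room for the interpolation. Away from $x_0$ the patching is routine.
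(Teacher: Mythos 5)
Your proof is correct and follows essentially the same route as the paper: both arguments build an admissible $C_0^1$ test function touching $u$ from below at $x_0$ by patching the given $\varphi$ near $x_0$ (this is where $\alpha<1$ is used, exactly as you say) with a truncated, regularised $\alpha$-cone of slope close to $\Lmi u\,(x_0)$ away from $x_0$, and then feed it into the supersolution condition; the paper regularises the cone from below at the exact slope $\Lmi u\,(x_0)$, while you smooth from above with the slack $1/k$ and pass to the limit $k\to\infty$, a purely cosmetic difference. One small correction: in your outer cut-off the parenthetical ``outside of which $u\geq 0$'' is not what the hypothesis gives (it gives $u\leq 0$ off $D$), so choose $B_R(x_0)$ large enough to contain $\spt u$ as well as $\spt\varphi$, so that $u=0$ there and $\chi_R\widetilde B_{k,\sigma}\leq 0=u$ holds as needed.
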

\begin{proof} We would like to take $u$ itself as a test function, but
  this is not allowed. Instead we construct a test function looking
  like an $\alpha$-cone with (negative) opening $\Lmi u\,(x_0)$. The details are spelled out below.

Since $\varphi$ is $C^1$ we can choose $\delta$ so small that
$$
\varphi(x)-\varphi(x_0)>\Lmi u\,(x_0)|x-x_0|^\alpha \quad \textup{ in $B_{2\delta}(x_0)$}.
$$
Choose $R$ very large and let $\psi_\delta$ be  a regularised version of 
$$\Lmi u\,(x_0) C_{x_0,R}+\varphi(x_0)$$
such that 
$$\psi_\delta =\Lmi u\,(x_0) C_{x_0,R}+\varphi (x_0) \text{ in $\R^n\setminus B_\delta(x_0)$},\quad \psi_\delta\leq \Lmi u\,(x_0)C_{x_0,R}+\varphi(x_0),$$
where $C_{x_0,R}$ is the truncated $\alpha$-cone in Lemma \ref{kon}. By definition
$$
\psi_\delta\leq\Lmi u\,(x_0) C_{x_0,R}+u(x_0)\leq u.
$$
Let $\eta_\delta$ be a cut-off function:
$$
\eta_\delta\geq 0,\quad \eta_\delta =0\textup{ in $\R^n\setminus B_{2\delta}(x_0)$}, \quad \eta_\delta = 1\textup{ in $B_\delta(x_0)$}.
$$
Finally, define $\Psi=\eta_\delta\varphi+(1-\eta_\delta)\psi_\delta$. One can verify that
$$
u\geq \Psi \geq \Lmi u\,(x_0)C_{x_0,R}+u(x_0),\quad u(x_0)=\Psi(x_0)=\varphi(x_0).
$$
In other words, $\Psi$ touches $u$ from below at $x_0$, and we can conclude
\begin{align*}
0\geq \Lmi \Psi\,(x_0)+\ll \Psi(x_0)&=\ll u(x_0)+\inf_{y\in \R^n} \frac{\Psi(y)-u(x_0)}{|y-x_0|^\alpha}\\
&\geq \ll u(x_0)+\inf_{y\in \R^n} \frac{\Lmi u\,(x_0)C_{x_0,R}(y)}{|y-x_0|^\alpha}\\
&=\ll u(x_0)+\Lmi u\,(x_0),
\end{align*}
since $\Lmi u\,(x_0) < 0$.
\end{proof}
Since a continuous function can be touched from below in a dense subset this implies, in view of Lemma \ref{Lminus} that the inequality is true everywhere.
\begin{cor} Let $\alpha<1$. Suppose $u\in C_0(\R^n)$ is a non-negative viscosity solution of the $\infty$-eigenvalue equation \eqref{limeq} in $D$, $u> 0$  in $D$, and $u\leq 0$ in $\R^n\setminus D$. Then $\Lmi u+\ll u\leq 0$ in $D$ in the pointwise sense.
\end{cor}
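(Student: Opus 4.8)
The plan is to read this off from Proposition \ref{Lmipw} by a density argument, using Lemma \ref{Lminus} to supply the continuity that lets the pointwise inequality propagate. Since $u$ is a viscosity solution of \eqref{limeq} it is in particular a viscosity supersolution, so $\Li u\le 0$ in $D$ in the viscosity sense; combined with $u>0$ in $D$ and $u\le 0$ in $\R^n\setminus D$ this is precisely the hypothesis of Lemma \ref{Lminus}. Hence $x\mapsto \Lmi u\,(x)$ is continuous in $D$, and since $u$ is continuous, so is the function $x\mapsto \Lmi u\,(x)+\ll u(x)$.

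First I would note that Proposition \ref{Lmipw} already delivers $\Lmi u\,(x_0)+\ll u(x_0)\le 0$ in the pointwise sense at every $x_0\in D$ that admits a test function $\varphi\in C_0^1(\R^n)$ touching $u$ from below. Next I would invoke the classical fact that for a continuous $u$ the set of such contact points is dense in $D$: given $z\in D$ and small $\e>0$, the function $x\mapsto u(x)-\tfrac{1}{2\e}|x-z|^2$ attains a maximum at some interior point $x_z$ of a fixed small ball around $z$, and the corresponding shifted, downward-opening paraboloid --- multiplied by a smooth cutoff, which can be arranged to drop below $u$ away from $x_z$ because $u\ge 0$ --- lies in $C_0^1(\R^n)$ and touches $u$ from below at $x_z$. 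Letting $z$ vary shows that these contact points fill a dense subset of $D$.

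Finally, the continuous function $\Lmi u+\ll u$ is then $\le 0$ on a dense subset of $D$, hence $\le 0$ throughout $D$, which is the assertion. The only step that is not pure bookkeeping is the density of contact points --- the standard lemma underlying the equivalence between viscosity testing and pointwise inequalities --- but since Proposition \ref{Lmipw} has already done the analytic work, even that reduces here to producing a \emph{compactly supported} $C^1$ test function, which the non-negativity of $u$ makes painless. I do not anticipate a real obstacle; the main point to get right is that all four hypotheses of Lemma \ref{Lminus} are genuinely available for this $D$, so that $\Lmi u$ is known to be continuous and the dense inequality can be closed up.
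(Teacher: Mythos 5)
Your route is the same as the paper's: the paper also deduces the inequality at a dense set of points from Proposition \ref{Lmipw} (citing Lemma 1.8(d) of \cite{BCD97} for the density of points admitting a $C^1$ touching function from below) and then closes up using the continuity of $\Lmi u+\ll u$ supplied by Lemma \ref{Lminus}, exactly as you do; your verification that the hypotheses of Lemma \ref{Lminus} are met is also the right thing to check. The one concrete flaw is in your explicit density construction: maximizing $x\mapsto u(x)-\tfrac{1}{2\e}|x-z|^2$ over a small ball produces, at the interior maximum point $x_z$, the inequality $u(x)\leq u(x_z)-\tfrac{1}{2\e}|x_z-z|^2+\tfrac{1}{2\e}|x-z|^2$, i.e.\ an upward-opening paraboloid touching $u$ from \emph{above}, which cannot be fed into Proposition \ref{Lmipw}. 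You should instead \emph{minimize} $x\mapsto u(x)+\tfrac{1}{2\e}|x-z|^2$ over $\overline{B_r(z)}\subset D$; for small $\e$ the boundary penalty $\tfrac{r^2}{2\e}$ forces the minimizer $x_z$ to be interior and $|x_z-z|^2\leq 2\e\,\osc u\to 0$, and the downward-opening paraboloid $\varphi(x)=u(x_z)+\tfrac{1}{2\e}|x_z-z|^2-\tfrac{1}{2\e}|x-z|^2$ then touches $u$ from below at $x_z$; since $\varphi<0$ outside $B_r(z)$ for small $\e$ and $u\geq 0$, multiplying by a cutoff equal to $1$ near $x_z$ gives the required global test function in $C_0^1(\R^n)$. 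With that sign corrected your argument is complete and coincides with the paper's, the only cosmetic difference being that you prove the density lemma by hand rather than quoting \cite{BCD97}.
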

\begin{proof} Part d) of Lemma 1.8 in \cite{BCD97} states that the subdifferential of a continuous function is non-empty in a dense subset. That the subdifferential is non-empty is equivalent to the existence of a $C^1$-function touching from below. Thus, from Lemma \ref{Lmipw}, $\Lmi u+\ll u\leq 0$ holds in a dense subset of $D$. By Lemma \ref{Lminus}, $\Lmi u+\ll u$ is a continuous function and hence the inequality holds in the whole $D$.
\end{proof}
When $\alpha=1$ the proof has to be modified slightly.
\begin{prop} Let $\alpha=1$. If $u\in C_0(\R^n)$ is a non-negative viscosity solution of \eqref{limeq}, $u> 0$  in $D$, and $u\leq 0$ in $\R^n\setminus D$. Then $\Lmi u+\Lambda_\infty u\leq 0$ in $D$ in the pointwise sense.
\end{prop}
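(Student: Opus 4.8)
The plan is to run the argument of Proposition~\ref{Lmipw} and of the corollary preceding this statement, with the $\alpha$-cone of Lemma~\ref{kon} replaced by the Lipschitz cone $C_{x_0,R}$ of Lemma~\ref{kon1}, but carried out only at points where $u$ is differentiable. Since $u$ satisfies the hypotheses of Lemma~\ref{Lminus} and Corollary~\ref{alphaholder}, the function $\Lmi u$ is continuous in $D$ and $u$ is locally Lipschitz there (recall $\alpha=1$); hence $u$ is differentiable on a dense set $\mathcal D\subset D$ by Rademacher's theorem, and, $x\mapsto\Lmi u\,(x)+\Lambda_\infty u(x)$ being continuous, it suffices to prove $\Lmi u\,(x_0)+\Lambda_\infty u(x_0)\le0$ for $x_0\in\mathcal D$. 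Fix such an $x_0$ and put $m:=\Lmi u\,(x_0)$. By Lemma~\ref{Lminus} (whose proof invokes Lemma~\ref{kon1} when $\alpha=1$) the defining infimum is attained in $\R^n\setminus D$, so $m=-u(x_0)/\dist(x_0,\R^n\setminus D)\in(-\infty,0)$, and, fixing $R$ with $D\subset B_R(x_0)$ and an admissible $\e$, one has the barrier $u(y)\ge u(x_0)+m\,C_{x_0,R}(y)$ for every $y\in\R^n$.

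The one genuinely new ingredient is the inequality $|\nabla u(x_0)|\le -m$. Whereas for $\alpha<1$ one automatically has $(\varphi(x)-\varphi(x_0))/|x-x_0|^\alpha\to0>\Lmi u\,(x_0)$ as $x\to x_0$, for $\alpha=1$ this quotient only tends to a directional derivative, so one needs $|\nabla\varphi(x_0)|$ to be controlled; this holds if $\nabla\varphi(x_0)=\nabla u(x_0)$, because $m=\Lmi u\,(x_0)\le-|\nabla u(x_0)|$ (let $y\to x_0$ along $-\nabla u(x_0)$ in the difference quotient; trivial if $\nabla u(x_0)=0$), and it is exactly what forces one to work on $\mathcal D$. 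Given $\tau>0$, I would choose a $C^1$ function $\varphi$ on a small ball $B_{2\delta}(x_0)$ touching $u$ from below at $x_0$ with $\nabla\varphi(x_0)=\nabla u(x_0)$, e.g. $\varphi(y)=u(x_0)+\nabla u(x_0)\cdot(y-x_0)-\tilde\omega(|y-x_0|)$ with $\tilde\omega$ a smooth modulus satisfying $\tilde\omega'(0^+)=0$ and dominating the deviation of $u$ from its tangent plane at $x_0$ (a routine smoothing). Since $-|\nabla u(x_0)|\ge m>m-\tau$, for $\delta$ small enough
$$\varphi(x)-\varphi(x_0)\ \ge\ (m-\tau)\,C_{x_0,R}(x)\qquad\text{in }B_{2\delta}(x_0).$$
This is precisely where the \emph{slackened} opening $m-\tau$ is needed, the sharp opening $m$ being inadmissible when $|\nabla\varphi(x_0)|=-m$.

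From here one proceeds verbatim as in Proposition~\ref{Lmipw}: take a cut-off $\eta_\delta$ with $\eta_\delta=1$ on $B_\delta(x_0)$, $\eta_\delta=0$ off $B_{2\delta}(x_0)$, $0\le\eta_\delta\le1$, a $C^1$ regularisation $\psi_\delta$ of $(m-\tau)\,C_{x_0,R}+u(x_0)$ that equals it off $B_\delta(x_0)$ and lies below it, set $\Psi=\eta_\delta\varphi+(1-\eta_\delta)\psi_\delta$, and finally damp $\Psi$ by a smooth cut-off far from $x_0$ so that $\Psi\in C_0^1(\R^n)$. Using the barrier together with $u\ge0$ one checks $\Psi\le u$ on $\R^n$, and since $C_{x_0,R}(y)\le|y-x_0|$ and $m-\tau<0$, the displayed inequality gives $\Psi(y)\ge u(x_0)+(m-\tau)|y-x_0|$ for all $y$; hence $\Psi$ touches $u$ from below at $x_0$ and $\Lmi\Psi\,(x_0)=\inf_y\frac{\Psi(y)-u(x_0)}{|y-x_0|}\ge m-\tau$. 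As $u$ is a viscosity supersolution of \eqref{limeq}, Definition~\ref{infeigen} applied to $\Psi$ yields $\Lmi\Psi\,(x_0)+\Lambda_\infty\Psi(x_0)\le0$, i.e. $m-\tau+\Lambda_\infty u(x_0)\le0$ because $\Psi(x_0)=u(x_0)$. Letting $\tau\downarrow0$ gives $\Lmi u\,(x_0)+\Lambda_\infty u(x_0)\le0$, and the continuity of $\Lmi u$ then propagates this from $\mathcal D$ to all of $D$.

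The hard part is the obstruction noted in the second paragraph: for $\alpha=1$ a test function touching $u$ from below may have gradient of size exceeding $|\Lmi u\,(x_0)|$, so the naive choice ``$\Psi=\varphi$ near $x_0$'' can force $\Lmi\Psi\,(x_0)<\Lmi u\,(x_0)$ and wreck the estimate; localising at differentiability points of $u$ (legitimate only because $u$ is now Lipschitz) and slackening the cone opening by $\tau$ is what repairs it. The remaining verifications --- that $\Psi\le u$ with the Lipschitz cone for a suitable $R$ and small $\e$, and the smoothing of the two corners of $C_{x_0,R}$ --- are routine and already implicit in Lemma~\ref{kon1} and in the proof of Proposition~\ref{Lmipw}.
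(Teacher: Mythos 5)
Your proof is correct and follows essentially the same route as the paper: use Corollary \ref{alphaholder} and Rademacher's theorem to work at points of differentiability, touch $u$ there with a $C^1$ function whose gradient equals $\nabla u(x_0)$ so that $\Lmi u\,(x_0)\leq -|\nabla\varphi(x_0)|$, slacken the opening of the Lipschitz cone of Lemma \ref{kon1} by $\tau$, repeat the $\Psi$-construction of Proposition \ref{Lmipw}, and finish by continuity of $\Lmi u+\Lambda_\infty u$ from Lemma \ref{Lminus}. The only differences are that you spell out some technical points (the explicit construction of $\varphi$, the far-field cut-off making $\Psi$ compactly supported) more carefully than the paper does; these are harmless.
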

\begin{proof} By Corollary \ref{alphaholder}, $u$ is locally Lipschitz continuous and thus by Rade-macher's theorem, $u$ is a.e. differentiable. Take $x_0$ where $u$ is differentiable. Then it is well known that one can find a $C^1$ function $\varphi$ touching $u$ from below at $x_0$. Moreover, $\Lmi u\,(x_0)\leq -|\nabla u(x_0)|=-|\nabla \varphi(x_0)|$. Given $\e>0$, there is $\delta>0$ such that
$$
\varphi	(x)\geq \varphi(x_0)+[\Lmi u\,(x_0)-\e]\,C_{x_0,R}(x) \textup{ in $B_{2\delta}(x_0)$}.
$$
Repeating the procedure with  $\eta_\delta,\psi_\delta$ and $\Psi$ as in the proof of Proposition \ref{Lmipw}, we obtain that 
$$
\Lmi u\,(x_0)+\Lambda_\infty u(x_0)\leq \e.
$$
Since $\e$ was arbitrary, this yields $\Lmi u\,(x_0)+\Lambda_\infty u(x_0)\leq 0$, and this holds at a.e. point in $D$. By Lemma \ref{Lminus}, $\Lmi u+\Lambda_\infty u$ is  a continuous function, so this must hold everywhere.
\end{proof}

\section{The Ground State} Recall that the  first
$\infty$-eigenfunctions were defined in Definition \ref{infeigen} as the non-negative
solutions in  $C_0(\overline{\Omega})$ of  the $\infty$-eigenvalue equation
\eqref{limeq}. We will give a remarkable representation formula for
one first $\infty$-eigenfunction, valid in any domain. In some cases we can
assure uniqueness.

We need some concepts related to the geometry of $\Omega$. We denote
by $\delta(x)$ the distance function, $\dist(x,\Rn \setminus
\Omega)$. This function is Lipschitz continuous and $|\nabla \delta| =
1 $ almost everywhere in $\Omega$. We define the \emph{High Ridge} as the set of points where the distance function attains its maximum, i.e.
$$
\hr=\{x\in \Omega |\,\delta(x)=R\},
$$
where as before, $R$ denotes the radius of the largest ball that can
be inscribed inside $\Omega$. The function $\delta(x)$ is not
differentiable on $\Gamma$. The High Ridge is a closed set and $\Omega
\setminus \Gamma$ is open. We denote 
$$\rho(x) = \dist(x,\Gamma).$$

The quantity $\Lambda_{\infty}^{\alpha}$ behaves as a genuine
eigenvalue in the sense that it cannot be replaced by any other number
in the $\infty$-eigenvalue equation:

\begin{thm} Let $u\in C_0(\overline\Omega),\,u\not \equiv 0$, be a non-negative solution of 
$$
\max\left\{\Li u\,(x),\Lmi u\, (x)+\lambda u(x)\right\}=0\quad \textup{in $\Omega$.}
$$
Then  $\lambda=\ll$.
\end{thm}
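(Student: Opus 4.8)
The plan is to sandwich $\lambda$ between the bounds $\lambda\leq\ll$ and $\lambda\geq\ll$, using the pointwise information from the section on Pointwise Behaviour together with the maximum principle for the homogeneous equation $\Li u=0$. First I would pin down $u$ and $\Lmi u$: since $u$ is in particular a viscosity supersolution of $\Li u=0$ in $\Omega$ with $u\geq 0$, $u\not\equiv 0$, Lemma \ref{pos} gives $u>0$ in $\Omega$, while $u=0$ in $\R^n\setminus\Omega$; hence $D=\Omega$ satisfies the hypotheses of Lemma \ref{Lminus}, which yields the explicit, continuous formula
\[
\Lmi u\,(x)=\inf_{y\notin\Omega}\frac{u(y)-u(x)}{|y-x|^\alpha}=-\frac{u(x)}{\delta(x)^\alpha}.
\]
Next, for the bound $\lambda\leq\ll$, I would invoke the pointwise inequality $\Lmi u+\lambda u\leq 0$ in $\Omega$: this is Proposition \ref{Lmipw} together with its corollary when $\alpha<1$, and the corresponding proposition when $\alpha=1$, whose proofs never use the particular value of the constant in front of $u$ and so apply with $\lambda$ in place of $\ll$. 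Dividing by $u(x)>0$ and inserting the formula above, this reads $\lambda\leq\delta(x)^{-\alpha}$ for all $x\in\Omega$; evaluating at a point of the High Ridge $\hr$, where $\delta=R$, gives $\lambda\leq R^{-\alpha}=\ll$.

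For the reverse inequality I would argue by contradiction: suppose $\lambda<\ll=R^{-\alpha}$. Then $\delta(x)\leq R<\lambda^{-1/\alpha}$ on $\Omega$, so the formula for $\Lmi u$ gives $\Lmi u\,(x)+\lambda u(x)=u(x)\bigl(\lambda-\delta(x)^{-\alpha}\bigr)<0$ at \emph{every} $x\in\Omega$. By Proposition \ref{Lpw} (again with $\lambda$ in place of $\ll$) this forces $\Li u\,(x)=0$ in the viscosity sense at every point of $\Omega$; that is, $u$ becomes a non-negative, non-trivial viscosity solution of $\Li u=0$ in $\Omega$ with zero exterior values. I would exclude this either by the comparison principle for $\Li u=0$ of \cite{CLM11} (comparing $u$ with $v\equiv 0$, which has the same exterior data), or by a direct maximum-point argument: at an interior maximum $x_0$ of $u$, a test function $\psi\in C_0^1(\R^n)$ equal to $u(x_0)$ on a large ball containing $\overline\Omega$ and satisfying $0\leq\psi\leq u(x_0)$ touches $u$ from above, while $\Lpi\psi\,(x_0)=0$ and $\Lmi\psi\,(x_0)<0$, so $\Li\psi\,(x_0)<0$, contradicting the subsolution inequality $\Li\psi\,(x_0)\geq 0$. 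Hence $u\equiv 0$, a contradiction, so $\lambda\geq\ll$; combined with the previous bound, $\lambda=\ll$.

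The step I expect to be the main obstacle is the reliance of both halves on the results of the Pointwise Behaviour section: one has to verify that Propositions \ref{Lmipw} and \ref{Lpw} and the surrounding corollaries — written there for the equation \eqref{limeq} with the normalized constant $\ll$ — are proved by arguments insensitive to the value of that constant, hence valid for an arbitrary $\lambda$, and one has to run the cases $\alpha<1$ and $\alpha=1$ in parallel (cone barriers from Lemma \ref{kon}, respectively Lemma \ref{kon1}), just as there. Everything else — the positivity of $u$, the closed form of $\Lmi u$, and the non-existence of a non-trivial non-negative solution of $\Li u=0$ with vanishing exterior data — is soft.
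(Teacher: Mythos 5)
Your proposal is correct and follows essentially the same route as the paper: the upper bound comes from the pointwise inequality $\Lmi u+\lambda u\leq 0$ (Proposition \ref{Lmipw}, with the constant playing no special role) combined with the formula $\Lmi u=-u/\delta^\alpha$ from Lemma \ref{Lminus}, and the lower bound by contradiction via Proposition \ref{Lpw} and the comparison principle for $\Li u=0$. Your extra care (positivity via Lemma \ref{pos}, the $\alpha=1$ case, and the optional direct maximum-point argument replacing the citation of \cite{CLM11}) only makes explicit what the paper leaves implicit.
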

\begin{proof} From Proposition \ref{Lmipw}, $\Lmi u+\lambda u\leq 0$ in the pointwise sense and from Lemma \ref{Lminus}
$$
\Lmi u\,(x)=\inf_{y\in \R^n\setminus\Omega}\frac{u(y)-u(x)}{|y-x|^\alpha}=-\frac{u(x)}{\delta(x)^\alpha}.
$$
Eliminating $u$ from the inequality, we obtain that $\lambda \leq \frac{1}{\delta(x)^\alpha}$ for all $x\in \Omega$. Hence, $\lambda\leq\ll$.

Now, assume  that $\lambda < \ll$. Then
$\lambda<\frac{1}{\delta(x)^\alpha}$ and thus $\Lmi u\,(x)+\lambda
u(x)<0$ for all $x\in \Omega$. By Lemma \ref{Lpw}, $\Li u =0$ in
$\Omega$, which by the comparison principle (\cite[Prop.~11.2]{CLM11})
implies that $u$ is identically zero in $\Omega$. This case was
excluded. Hence $\lambda \geq \Lambda_{\infty}^{\alpha}.$ The result follows.
\end{proof}
An immediate consequence of the theorem above is that any first $\infty$-eigenfunction minimizes the Rayleigh quotient \eqref{minimum}.

The fundamental role of the High Ridge $\Gamma$ is revealed in:

\begin{thm}\label{hrlem} Let $u$ be a first
  $\infty$-eigenfunction. Then $\Lmi u\,(x)+\ll u\,(x)=0$ (pointwise)
  if and only if $x\in \hr$. In the complement $\Omega \setminus
  \Gamma$ the equation $\Li u=0$  holds in the viscosity sense.
\end{thm}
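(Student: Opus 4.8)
The plan is to reduce both assertions to the explicit pointwise value of $\Lmi u$ supplied by Lemma \ref{Lminus}. A first $\infty$-eigenfunction $u$ is in particular a viscosity supersolution of $\Li u=0$ in $\Omega$ (this is one half of the supersolution requirement for \eqref{limeq}), is positive in $\Omega$, and vanishes in $\R^n\setminus\Omega$; hence Lemma \ref{Lminus} applies with $D=\Omega$ and gives
$$
\Lmi u\,(x)=\inf_{y\in\R^n\setminus\Omega}\frac{u(y)-u(x)}{|y-x|^\alpha}=\inf_{y\in\R^n\setminus\Omega}\frac{-u(x)}{|y-x|^\alpha}=-\frac{u(x)}{\delta(x)^\alpha},
$$
the last equality because $u\equiv 0$ off $\Omega$ and the nearest complement point to $x$ lies at distance $\delta(x)=\dist(x,\R^n\setminus\Omega)$. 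Since $\ll=R^{-\alpha}$ by the very definition of $\Lambda_\infty^\alpha$, this produces the clean identity
$$
\Lmi u\,(x)+\ll u\,(x)=u(x)\Bigl(\frac{1}{R^\alpha}-\frac{1}{\delta(x)^\alpha}\Bigr),\qquad x\in\Omega .
$$

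From here the first assertion is immediate: for $x\in\Omega$ we have $u(x)>0$ and $0<\delta(x)\leq R$, so the right-hand side is $\leq 0$, and it vanishes exactly when $\delta(x)=R$, i.e.\ exactly when $x\in\Gamma$. For the second assertion, fix $x_0\in\Omega\setminus\Gamma$; then $\delta(x_0)<R$, so the identity yields the \emph{strict} pointwise inequality $\Lmi u\,(x_0)+\ll u\,(x_0)<0$. Now Proposition \ref{Lpw}, applied with $D=\Omega$ --- its hypotheses $u>0$ in $\Omega$, $u\leq 0$ in $\R^n\setminus\Omega$, and $u$ a viscosity solution of \eqref{limeq} are exactly what ``first $\infty$-eigenfunction'' provides --- gives $\Li u\,(x_0)=0$ in the viscosity sense. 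As $x_0\in\Omega\setminus\Gamma$ was arbitrary, $\Li u=0$ holds in the viscosity sense throughout $\Omega\setminus\Gamma$.

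There is essentially no obstacle once Lemmas \ref{Lminus} and \ref{Lpw} are in hand; the only points requiring a little care are checking that the data of a first $\infty$-eigenfunction match the hypotheses of those two results (positivity inside $\Omega$, vanishing outside, and the full viscosity-solution property of \eqref{limeq}), and recording that $\ll=R^{-\alpha}$. One should also note that Lemma \ref{Lminus} and Proposition \ref{Lpw} are available for all $0<\alpha\leq 1$ --- the case $\alpha=1$ using the truncated Lipschitz cone of Lemma \ref{kon1} in place of the $\alpha$-cone of Lemma \ref{kon} --- so no case distinction in $\alpha$ is needed.
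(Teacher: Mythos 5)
Your argument is correct and is essentially the paper's own proof: Lemma \ref{Lminus} gives the pointwise formula $\Lmi u\,(x)=-u(x)/\delta(x)^\alpha$, from which the identity $\Lmi u\,(x)+\ll u\,(x)=u(x)\bigl(R^{-\alpha}-\delta(x)^{-\alpha}\bigr)$ settles the equivalence with $x\in\hr$. Your explicit appeal to Proposition \ref{Lpw} for the statement $\Li u=0$ in $\Omega\setminus\hr$ is exactly the intended (though unstated) step in the paper, so nothing is missing.
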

\begin{proof} By Lemma \ref{Lminus}
$$
\Lmi u\,(x)=\inf_{y\in \R^n\setminus\Omega}\frac{u(y)-u(x)}{|y-x|^\alpha}=\inf_{y\in \R^n\setminus\Omega}\frac{-u(x)}{|y-x|^\alpha}=-\frac{u(x)}{\delta(x)^\alpha}.
$$
Thus
$$
\Lmi u\,(x)+\ll
u\,(x)=u(x)\left(\frac{1}{R^\alpha}-\frac{1}{\delta(x)^\alpha}\right)
\leq 0
$$
with equality if and only if $\delta(x)=R$, i.e., if and only if $x\in \hr$.
\end{proof}

This provides us with a method to construct   first
$\infty$-eigenfunctions, using an equation that does not explicitly
contain $\Lambda_{\infty}^{\alpha}$. Let $\Gamma_{1} \subset \Gamma$
be an arbitrary closed non-empty subset. According to \cite[Thm~1.5]{CLM11}, the Dirichlet boundary value problem 
\begin{equation}\label{explicitfcn}
\left\{\begin{array}{lr} \Li u =0 & \textup{in $\Omega \setminus \Gamma_{1}$},\\
u=0 & \textup{in $\R^n\setminus \Omega$},\\
u=1 & \textup{on $\Gamma_{1}$},
\end{array}\right.
\end{equation}
has a unique viscosity solution in $\Omega\setminus \Gamma_{1}$, which
takes the boundary values continuously. Moreover, $0\leq u\leq 1$ by
\cite[Prop.~11.2]{CLM11}. Moreover, by Lemma \ref{positivity} we have $0 < u < 1$ in $\Omega
\setminus \Gamma_{1}$. 
Therefore different subsets yield different solutions!
\begin{thm}\label{explicit} The solution of the Dirichlet problem \eqref{explicitfcn} is a first $\infty$-eigenfunction in $\Omega$.
\end{thm}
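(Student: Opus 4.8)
The plan is to verify the two halves of Definition \ref{infeigen} pointwise, distinguishing the points of $\Omega\setminus\Gamma_1$, where the equation $\Li u=0$ from \eqref{explicitfcn} is at our disposal, from the points of $\Gamma_1$, where it is not. First I would record the structural facts already in hand: $u\in C_0(\overline\Omega)$ with $u\equiv 0$ in $\R^n\setminus\Omega$, $0<u\le 1$ in $\Omega$, and $u=1$ on $\Gamma_1\subset\Gamma$, so that $u$ is a legitimate non-negative candidate for a first $\infty$-eigenfunction.

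The next step is to promote the equation $\Li u=0$ in $\Omega\setminus\Gamma_1$ to the full statement that $u$ is a viscosity supersolution of $\Li u=0$ in all of $\Omega$. Only the points $x_0\in\Gamma_1$ need attention, and there $u(x_0)=1=\max u$; hence any $\phi\in C_0^1(\R^n)$ touching $u$ from below at $x_0$ satisfies $\phi(y)\le u(y)\le 1=\phi(x_0)$ for all $y$, so $\Lpi\phi\,(x_0)\le 0$ and $\Lmi\phi\,(x_0)\le 0$, giving $\Li\phi\,(x_0)\le 0$. With this, $u$ and $D=\Omega$ satisfy the hypotheses of Lemma \ref{Lminus}, which yields $\Lmi u\,(x)=-u(x)/\delta(x)^\alpha$ throughout $\Omega$; in particular $\Lmi u\,(x_0)=-R^{-\alpha}$ whenever $x_0\in\Gamma_1$, since then $\delta(x_0)=R$.

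For the supersolution property of \eqref{limeq}, let $\phi$ touch $u$ from below at $x_0\in\Omega$: the bound $\Li\phi\,(x_0)\le 0$ is exactly what was just established, and since $\phi\le u$ with $\phi\le 0=u$ in $\R^n\setminus\Omega$ and $\phi(x_0)=u(x_0)$, one has $\Lmi\phi\,(x_0)\le\inf_{y\notin\Omega}\frac{-u(x_0)}{|y-x_0|^\alpha}=-u(x_0)/\delta(x_0)^\alpha$, so that $\Lmi\phi\,(x_0)+\ll\phi(x_0)\le u(x_0)\bigl(R^{-\alpha}-\delta(x_0)^{-\alpha}\bigr)\le 0$ because $\delta(x_0)\le R$ and $u(x_0)>0$. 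For the subsolution property, let $\psi$ touch $u$ from above at $x_0\in\Omega$. If $x_0\in\Omega\setminus\Gamma_1$, then $\Li\psi\,(x_0)\ge 0$ directly from \eqref{explicitfcn}, so the maximum in \eqref{limeq} is $\ge 0$. If $x_0\in\Gamma_1$, then $\psi\ge u$ and $\psi(x_0)=u(x_0)$ force $\Lmi\psi\,(x_0)\ge\Lmi u\,(x_0)=-R^{-\alpha}$ (the $p=\infty$ analogue of the monotonicity in Remark \ref{mono}), and since $\psi(x_0)=u(x_0)=1$ this gives $\Lmi\psi\,(x_0)+\ll\psi(x_0)\ge -R^{-\alpha}+R^{-\alpha}=0$; again the maximum is $\ge 0$. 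Hence $u$ is both a viscosity supersolution and a viscosity subsolution, i.e. a first $\infty$-eigenfunction.

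The one genuinely delicate point, and essentially the only one, is the behaviour on $\Gamma_1$, where \eqref{explicitfcn} says nothing: the crux is that the inclusion $\Gamma_1\subset\Gamma$ forces $\delta=R$ there, so the quantity $\Lmi u+\ll u$ vanishes exactly, which is precisely what lets the second alternative of the subsolution condition carry the argument. The identification $\Lmi u\,(x)=-u(x)/\delta(x)^\alpha$ (made rigorous via the truncated cone barriers of Lemmas \ref{kon}--\ref{kon1} inside Lemma \ref{Lminus}) does the rest; everything else reduces to elementary manipulation of the $\sup$ and $\inf$ defining $\Lpi$ and $\Lmi$.
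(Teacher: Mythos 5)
Your proposal is correct and follows essentially the same route as the paper: show $u$ is a viscosity supersolution of $\Li u=0$ in all of $\Omega$ by using that $u$ attains its maximum on $\Gamma_1$, invoke Lemma \ref{Lminus} to identify $\Lmi u=-u/\delta^\alpha$ so that $\Lmi u+\ll u\le 0$ everywhere, and for the subsolution property use the equation in $\Omega\setminus\Gamma_1$ together with $\delta=R$ on $\Gamma_1\subset\Gamma$ to get $\Lmi u+\ll u=0$ there. Your write-up merely spells out in test-function language what the paper compresses into ``by the same arguments as in the proof of Lemma \ref{hrlem}.''
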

\begin{proof} We first prove that $u$ is a viscosity supersolution of
  the $\infty$-eigenvalue equation \eqref{limeq}. Take $\varphi$ touching $u$ from below at $x_0\in \hr_1$. Then by direct pointwise computations
$$
\Li \varphi\,(x_0)\leq \Li u\,(x_0)\leq 0, 
$$
since $0\leq u\leq 1$.

Hence $u$ is a supersolution of $\Li u\leq 0$  in the whole $\Omega$. By Lemma \ref{Lminus}
$$
\Lmi u\,(x)=\inf_{y\in \R^n\setminus \Omega}\frac{u(y)-u(x)}{|y-x|^\alpha}=-\frac{u(x)}{\delta(x)^\alpha}
$$
and we can conclude
$$
\Lmi u\,(x)+\ll u(x)=u(x)\left(\frac{1}{R^{\alpha}}-\frac{1}{\delta(x)^\alpha}\right)\leq 0,
$$
for any $x\in \Omega$, since $R \geq \delta(x)$. Thus $u$ is a
viscosity supersolution of the $\infty$-eigenvalue equation \eqref{limeq} in $\Omega$.

To prove that $u$ is also a viscosity subsolution of \eqref{limeq}, it is enough to verify that $\Lmi u+\ll u\geq 0$ on $\hr_1$. This follows by the same arguments as in the proof of Lemma \ref{hrlem}.
\end{proof}
 We can give the solution of \eqref{explicitfcn} explicitly in terms
 of  distances.
\begin{thm}[Representation Formula] \label{expliciteigfcn} Let $\rho_{1}(x)=\dist(x,\Gamma_{1})$. The function
$$
u(x)=\frac{\delta(x)^\alpha}{\delta(x)^\alpha+\rho_{1}(x)^\alpha}
$$
solves the problem \eqref{explicitfcn} and is therefore a first $\infty$-eigenfunction.
\end{thm}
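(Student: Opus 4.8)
The plan is to verify by a direct pointwise computation that the explicit function
$$u(x)=\frac{\delta(x)^\alpha}{\delta(x)^\alpha+\rho_{1}(x)^\alpha}$$
is a viscosity solution of the Dirichlet problem \eqref{explicitfcn}, and then to invoke Theorem \ref{explicit}. The elementary facts come first: the denominator $\delta^\alpha+\rho_1^\alpha$ never vanishes, since $\delta(x)=0$ forces $x\notin\Omega$ while $\rho_1(x)=0$ forces $x\in\Gamma_1\subset\Omega$; hence $u\in C(\Rn)$. Moreover $u\equiv 0$ in $\Rn\setminus\Omega$, $u\equiv 1$ on $\Gamma_1$, and $0<u<1$ in $\Omega\setminus\Gamma_1$, so $u\in C_0(\overline{\Omega})$ and the boundary data are attained continuously. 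It remains to show that $\Li u\,(x)=0$ in the viscosity sense at every $x\in\Omega\setminus\Gamma_1$.

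Since $\Li$ — and $\Lpi$, $\Lmi$ separately — is monotone in the sense of Remark \ref{mono} (if $\psi\geq\varphi$ and $\psi(x_0)=\varphi(x_0)$, then every difference quotient of $\psi$ at $x_0$ dominates the corresponding one of $\varphi$, so $\Li\psi\,(x_0)\geq\Li\varphi\,(x_0)$), it suffices to establish the \emph{pointwise} identity $\Li u\,(x)=0$ on $\Omega\setminus\Gamma_1$: testing $u$ from above by $\psi$ then gives $\Li\psi\,(x_0)\geq\Li u\,(x_0)=0$, and from below by $\varphi$ gives $\Li\varphi\,(x_0)\leq\Li u\,(x_0)=0$. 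Thus the whole theorem reduces to computing $\Lpi u$ and $\Lmi u$ pointwise.

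Fix $x\in\Omega\setminus\Gamma_1$ and write $a=\delta(x)>0$, $b=\rho_1(x)>0$. I claim $\Lmi u\,(x)=-u(x)/a^\alpha$, with the infimum attained at a nearest point $\xi\in\Rn\setminus\Omega$ (there $u(\xi)=0$ and $|x-\xi|=a$, which already gives $\Lmi u\,(x)\leq -u(x)/a^\alpha$). For the reverse inequality one must check $u(y)\geq u(x)-(u(x)/a^\alpha)|y-x|^\alpha$ for every $y$; this is immediate when $|y-x|\geq a$ (the right side is $\leq 0$, while $u\geq 0$), and for $r:=|y-x|<a$ one uses that $\delta$ and $\rho_1$ are $1$-Lipschitz (so $\delta(y)\geq a-r>0$ and $\rho_1(y)\leq b+r$) together with the monotonicity of $(s,t)\mapsto s^\alpha/(s^\alpha+t^\alpha)$ in each variable to reduce the claim to
$$\frac{(a-r)^\alpha}{(a-r)^\alpha+(b+r)^\alpha}\;\geq\;\frac{a^\alpha-r^\alpha}{a^\alpha+b^\alpha}.$$
Clearing denominators, this is equivalent to $\dfrac{b^\alpha+r^\alpha}{(b+r)^\alpha}\geq\dfrac{a^\alpha-r^\alpha}{(a-r)^\alpha}$, and here the left side is $\geq 1$ and the right side is $\leq 1$ by the subadditivity $(s+t)^\alpha\leq s^\alpha+t^\alpha$, valid for $0<\alpha\leq 1$ and $s,t\geq 0$ (applied to $(b,r)$ and to $(a-r,r)$). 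An entirely symmetric argument, using instead a nearest point $\gamma\in\Gamma_1$ with $u(\gamma)=1$, the bound $u(y)\leq u(x)+((1-u(x))/b^\alpha)|y-x|^\alpha$ (trivial for $|y-x|\geq b$), and the twin inequality $\dfrac{b^\alpha-r^\alpha}{(b-r)^\alpha}\leq\dfrac{a^\alpha+r^\alpha}{(a+r)^\alpha}$, yields $\Lpi u\,(x)=(1-u(x))/b^\alpha$. Adding the two and using $u(x)=a^\alpha/(a^\alpha+b^\alpha)$, $1-u(x)=b^\alpha/(a^\alpha+b^\alpha)$, both terms equal $1/(a^\alpha+b^\alpha)$, so $\Li u\,(x)=0$. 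Combined with the boundary data, $u$ solves \eqref{explicitfcn}, and Theorem \ref{explicit} identifies $u$ as a first $\infty$-eigenfunction.

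The case distinctions (according to whether $|y-x|$ is at least $\delta(x)$, resp.\ $\rho_1(x)$, or not) and the monotonicity reduction are routine; the genuine content is the pair of elementary inequalities above, and I expect the small subtlety there — and the check that in the limiting case $\alpha=1$ the same chain still delivers the inequalities (now with equality in the borderline) — to be the only delicate point. As a byproduct, the computation shows that the infimum defining $\Lmi u$ is attained in $\Rn\setminus\Omega$ and the supremum defining $\Lpi u$ on $\Gamma_1$, in agreement with Lemma \ref{Lminus}.
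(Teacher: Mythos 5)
Your verification is correct, and its overall architecture coincides with the paper's: check that the explicit formula solves the Dirichlet problem \eqref{explicitfcn} and then invoke Theorem \ref{explicit}. The difference lies in how the two halves of $\Li u$ are pinned down. The paper proves only that the supremum in $\Lpi u\,(x)$ is attained on $\Gamma_1$ (working with the actual values $\delta(y),\rho_1(y)$ and the subadditivity of $t\mapsto t^\alpha$), obtains $\Lmi u\,(x)\leq -\bigl(\delta(x)^\alpha+\rho_1(x)^\alpha\bigr)^{-1}$ from the nearest exterior point, concludes $\Li u\leq 0$ in all of $\Omega$, and then gets the \emph{exact} value of $\Lmi u$ — attainment of the infimum in $\Rn\setminus\Omega$ — by appealing to Lemma \ref{Lminus}, which itself rests on the cone functions and the comparison principle of \cite{CLM11}. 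You instead compute both $\Lpi u$ and $\Lmi u$ exactly by direct elementary estimates: the worst-case Lipschitz bounds $\delta(y)\geq\delta(x)-r$, $\rho_1(y)\leq\rho_1(x)+r$ (and their twins), the monotonicity of $s^\alpha/(s^\alpha+t^\alpha)$, and the pair of inequalities $(b^\alpha+r^\alpha)/(b+r)^\alpha\geq 1\geq(a^\alpha-r^\alpha)/(a-r)^\alpha$, which I checked are correct consequences of subadditivity and do deliver the two claimed bounds (the case splits at $r=\delta(x)$, resp.\ $r=\rho_1(x)$, are handled correctly, and everything survives $\alpha=1$ since only non-strict inequalities are needed). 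So the elementary core (triangle inequality plus subadditivity of $t^\alpha$) is shared, but your treatment of $\Lmi$ is self-contained and bypasses Lemma \ref{Lminus} and hence the comparison machinery of \cite{CLM11} at that step; you also make explicit, via the monotonicity of Remark \ref{mono}, the passage from the pointwise identity $\Li u=0$ to the viscosity formulation, which the paper leaves implicit. What the paper's route buys is a shorter computation (one inequality instead of two) and reuse of a lemma needed elsewhere; what yours buys is independence from the comparison principle in this verification and an explicit identification of where both the supremum and the infimum are attained.
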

\begin{proof}
For notational convenience, we drop the index  writing $\Gamma$ for
$\Gamma_{1}$ and $\rho$ for $\rho_{1}$ in this proof.
We first claim that when $x\in \Omega\setminus \Gamma$, the supremum
in $\Lpi u\,(x)$ is attained on $\Gamma$ or, in other words, that
\begin{align*}
\frac{u(y)-u(x)}{|y-x|^\alpha}&\leq \sup_{y\in \Gamma}\frac{u(y)-u(x)}{|y-x|^\alpha}\\
&=\sup_{y\in \Gamma}\frac{1-\frac{\delta(x)^\alpha}{\delta(x)^\alpha+\rho(x)^\alpha}}{|y-x|^\alpha}\\&=\displaystyle\frac{\frac{\rho(x)^\alpha}{\delta(x)^\alpha+\rho(x)^\alpha}}{{\underbrace{|y_x-x|}_{=\rho(x)}}^\alpha}\\
&=\frac{1}{\delta(x)^\alpha+\rho(x)^\alpha}
\end{align*}
for all $y\in \Omega$. This is equivalent to
$$
\frac{\delta(y)^\alpha\rho(x)^\alpha-\delta(x)^\alpha\rho(y)^\alpha}{|x-y|^\alpha(\delta(x)^\alpha+\rho(x)^\alpha)}\leq 1, 
$$
or
$$
\delta(y)^\alpha\rho(x)^\alpha\leq |x-y|^\alpha\delta(x)^\alpha+|x-y|^\alpha\rho(x)^\alpha+\delta(x)^\alpha\rho(y)^\alpha.
$$
Since $\alpha\in (0,1]$,  the triangle inequality yields
$$
\delta(y)^\alpha\leq |x-y|^\alpha+\delta(x)^\alpha,\quad \rho(x)^\alpha\leq \rho(y)^\alpha+|x-y|^\alpha.
$$
Hence, 
$$
\delta(y)^\alpha\rho(x)^\alpha\leq |x-y|^\alpha\rho(x)^\alpha+\delta(x)^\alpha\rho(x)^\alpha\leq |x-y|^\alpha\rho(x)^\alpha+\delta(x)^\alpha(\rho(y)^\alpha+|x-y|^\alpha),
$$
which proves the claim.

It remains to verify that $u$ solves \eqref{explicitfcn}. Take $x\in \Omega\setminus \hr$. From the claim
$$
\Lpi u\,(x)=\frac{1}{\delta(x)^\alpha+\rho(x)^\alpha}.
$$
Moreover, 
$$
\Lmi u\,(x)\leq  \inf_{y\in \R^n\setminus
  \Omega}\frac{u(y)-u(x)}{|y-x|^\alpha} = \frac{-u(x)}{{\underbrace{|y_x-x|}_{=\delta(x)}}^\alpha}=-\frac{1}{\delta(x)^\alpha+\rho(x)^\alpha}.
$$
Thus  $\Li u \leq 0$ in $\Omega \setminus \Gamma.$ 
  If $x\in \Gamma$
then $\Lmi u\,(x)\leq \Lpi u\,(x)\leq 0$ since $u$ attains its maximum
there. Thus, $\Li u\leq 0$ in $\Omega$. By Lemma \ref{Lminus} the
infimum in $\Lmi u\,(x)$ is attained in $\R^n\setminus \Omega$ so that 
$$
\Lmi u\,(x)=-\frac{1}{\delta(x)^\alpha+\rho(x)^\alpha}.
$$
Hence, $\Li u =0$ in $\Omega\setminus\hr$. The boundary values of $u$
on $\Gamma$ and $\Rn \setminus \Omega$ are $1$ and $0$. Thus $u$ is a
solution of the Dirichlet problem \eqref{explicitfcn}. The final result
follows from Theorem \ref{explicit}.
\end{proof}

In the case when every first $\infty$-eigenfunction is constant on the
High Ridge,  the first $\infty$-eigenfunction is unique (up to
multiplication by a constant). It is the solution given in Theorem
\ref{explicit}. Indeed, we have:
\begin{cor} \label{constonridge} A first $\infty$-eigenfunction that is constant on $\hr$ is given by the representation formula 
$$
u(x)=C\frac{\delta(x)^\alpha}{\delta(x)^\alpha+\rho(x)^\alpha}.
$$
\end{cor}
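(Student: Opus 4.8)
The plan is to identify $u$, after dividing by its constant value on the High Ridge, with the unique solution of the Dirichlet problem \eqref{explicitfcn} for $\Gamma_1=\hr$, and then quote the representation formula of Theorem \ref{expliciteigfcn}.

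First I would collect what is already available. Being a first $\infty$-eigenfunction, $u$ belongs to $C_0(\overline\Omega)$, is strictly positive in $\Omega$, and vanishes on $\partial\Omega$ and throughout $\Rn\setminus\Omega$; moreover it attains these boundary values continuously. By hypothesis $u\equiv C$ on $\hr$, and $C>0$ since $\hr\subset\Omega$. Theorem \ref{hrlem} tells us that $\Li u=0$ in the viscosity sense in the open set $\Omega\setminus\hr$. Hence $u$ is a viscosity solution of $\Li u=0$ in $\Omega\setminus\hr$ with continuous boundary data $0$ on $\Rn\setminus\Omega$ and $C$ on $\hr$.

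Next I would normalize to reduce to \eqref{explicitfcn}. The operator $\Li$ is positively homogeneous, $\Li(cw)=c\,\Li w$ for $c>0$, and the viscosity notion is insensitive to multiplication by a positive constant, since the touching relations are preserved and the pointwise test inequalities merely rescale. So $v:=u/C$ is a viscosity solution of $\Li v=0$ in $\Omega\setminus\hr$ with $v=0$ in $\Rn\setminus\Omega$ and $v=1$ on $\hr$, i.e.\ $v$ solves \eqref{explicitfcn} with $\Gamma_1=\hr$. By the uniqueness in \cite[Thm~1.5]{CLM11}, $v$ agrees in $\Omega\setminus\hr$ with the solution given by Theorem \ref{expliciteigfcn}, namely $\delta(x)^\alpha/(\delta(x)^\alpha+\rho(x)^\alpha)$ with $\rho(x)=\dist(x,\hr)$; since both sides also equal $1$ on $\hr$ (where $\rho=0$) and $0$ outside $\Omega$, they coincide everywhere. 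Multiplying back by $C$ gives the asserted formula.

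I do not expect a genuine obstacle: the statement is a bookkeeping corollary of Theorems \ref{hrlem} and \ref{expliciteigfcn} together with the uniqueness theorem of \cite{CLM11}. The two points worth stating carefully are that the constancy hypothesis is precisely what turns ``$\Li u=0$ in $\Omega\setminus\hr$ with some boundary trace on $\hr$'' into a well-posed Dirichlet problem, and the homogeneity of $\Li$ that permits normalizing that trace to $1$.
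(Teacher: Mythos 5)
Your proposal is correct and follows essentially the same route as the paper: invoke Theorem \ref{hrlem} to get $\Li u=0$ in $\Omega\setminus\hr$, normalize by the constant value on $\hr$ so that $u/C$ solves the Dirichlet problem \eqref{explicitfcn} with $\Gamma_1=\hr$, and conclude by the uniqueness result of \cite[Thm~1.5]{CLM11} together with the explicit solution of Theorem \ref{expliciteigfcn}. You merely spell out the homogeneity and boundary-trace bookkeeping that the paper leaves implicit.
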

\begin{proof} Let $u$ be a first $\infty$-eigenfunction. By Lemma
  \ref{hrlem}, $\Li u =0$ outside $\hr$ so that, up to a multiplicative
  constant, $u$ satisfies \eqref{explicitfcn}. By \cite[Thm~1.5]{CLM11}
  the solution of equation  \eqref{explicitfcn} is unique.
\end{proof}

\medskip

\emph{Example:} This certainly implies uniqueness when the High Ridge
consists of only one point, as for a ball or a cube. The first
eigenfunction for the ball $B(0,R)$ is
$$ \frac{(R-|x|)^\alpha}{(R-|x|)^\alpha+|x|^\alpha}.$$
For $\alpha=1$ it becomes $\delta(x)=R-|x|$, which incidentally also
solves the differential equation \eqref{globq}.

\medskip

 We have seen that if the High Ridge $\Gamma$ consists
  of more than one point, we can construct several linearly independent first
  $\infty$-eigenfunctions for the same domain $\Omega$. It stands to
  reason that the limiting procedure $u_{p_{j}} \to u$ in Section \ref{sec:passage}
yields the maximal solution, in which case $\Gamma_{1} = \Gamma$. We
have no valid proof, except in some symmetric special cases.

We have a geometric criterion to guarantee that the distance function is a first $\infty$-eigenfunction.
\begin{cor} Take $\alpha =1$. If the distance function is differentiable outside $\hr$, then the distance function is a first $\infty$-eigenfunction.
\end{cor}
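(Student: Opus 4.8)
The plan is to verify directly that the distance function $\delta(x)=\dist(x,\Rn\setminus\Omega)$ is a viscosity solution of the $\infty$-eigenvalue equation \eqref{limeq} with $\Lambda_\infty=R^{-1}$ in the sense of Definition \ref{infeigen}; since $\delta\in C_0(\overline{\Omega})$ and $\delta>0$ in $\Omega$, it is then a first $\infty$-eigenfunction. (Equivalently one can first show, by a characteristics argument, that the differentiability hypothesis forces $\delta(x)+\rho(x)\equiv R$ in $\Omega$, so that $\delta=R\,\frac{\delta}{\delta+\rho}$ and Theorem \ref{expliciteigfcn} applies; but the direct route is cleaner.) Only two facts about $\delta$ are needed: it is $1$-Lipschitz and vanishes on $\Rn\setminus\Omega$; and at any point $x\in\Omega$ where it is differentiable, $\nabla\delta(x)$ is a unit vector pointing away from the (then necessarily unique) nearest boundary point $\xi$ --- because $\delta$ is affine with slope $1$ on the segment from $\xi$ to $x$, so $\nabla\delta(x)\cdot\frac{x-\xi}{|x-\xi|}=1$, which combined with $|\nabla\delta(x)|\le1$ forces $\nabla\delta(x)=\frac{x-\xi}{|x-\xi|}$. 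In particular $\delta$ is never differentiable on $\hr$, so the hypothesis means exactly that $\delta$ is differentiable at every point of $\Omega\setminus\hr$.

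For the supersolution property, no hypothesis is needed: if $\phi\in C_0^1(\Rn)$ touches $\delta$ from below at $x_0\in\Omega$ then $\phi(y)-\phi(x_0)\le\delta(y)-\delta(x_0)$ for all $y$, so by the Lipschitz bound $\Lpi\phi\,(x_0)\le1$, while choosing for $y$ a nearest boundary point $\xi$ (where $\phi(\xi)\le\delta(\xi)=0$) gives $\Lmi\phi\,(x_0)\le-1$; hence $\Li\phi\,(x_0)\le0$ and $\Lmi\phi\,(x_0)+\Lambda_\infty\phi(x_0)\le-1+\delta(x_0)/R\le0$. For the subsolution property, let $\psi\in C_0^1(\Rn)$ touch $\delta$ from above at $x_0\in\Omega$; from $\psi\ge\delta$, $\psi(x_0)=\delta(x_0)$ and the Lipschitz bound one gets $\psi(y)-\psi(x_0)\ge-|y-x_0|$, so $\Lmi\psi\,(x_0)\ge-1$. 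If $x_0\in\hr$ then $\psi(x_0)=R$ and already $\Lmi\psi\,(x_0)+\Lambda_\infty\psi(x_0)\ge-1+1=0$. If $x_0\in\Omega\setminus\hr$, the hypothesis makes $\delta$ differentiable at $x_0$, so $\nabla\psi(x_0)=\nabla\delta(x_0)=:\mathbf n$ is a unit vector; computing the difference quotients of $\psi$ along the rays $x_0\pm t\mathbf n$ as $t\to0^+$ yields $\Lpi\psi\,(x_0)\ge\nabla\psi(x_0)\cdot\mathbf n=1$ and $\Lmi\psi\,(x_0)\le-1$, so $\Li\psi\,(x_0)=\Lpi\psi\,(x_0)+\Lmi\psi\,(x_0)\ge0$. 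In either case the subsolution condition holds, and $\delta$ is a viscosity solution of \eqref{limeq}, hence a first $\infty$-eigenfunction.

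The single delicate point, and the only place the hypothesis enters, is the lower bound $\Lpi\psi\,(x_0)\ge1$ at interior points off $\hr$. At a concave kink of $\delta$ a smooth test function touching from above may have $|\nabla\psi(x_0)|<1$, or a gradient not aligned with the steepest ascent of $\delta$, and no far-away $y$ can rescue the supremum; differentiability of $\delta$ at $x_0$ is exactly what pins $\nabla\psi(x_0)=\nabla\delta(x_0)$ down to a unit vector. This also explains the restriction to $\alpha=1$: for $\alpha<1$ the local quotient $t^{-\alpha}\bigl(\psi(x_0+t\mathbf n)-\psi(x_0)\bigr)$ tends to $0$, so local differentiability gives no positive lower bound for $\Lpi\psi\,(x_0)$.
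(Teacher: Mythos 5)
Your proof is correct; the only blemish is cosmetic: in the final sentence of the subsolution case you quote $\Lmi\psi\,(x_0)\le-1$, whereas the inequality actually needed to conclude $\Li\psi\,(x_0)\ge0$ is the bound $\Lmi\psi\,(x_0)\ge-1$ that you had already established at the start of that paragraph (together they give $\Lmi\psi\,(x_0)=-1$), so the argument stands as written. Your route differs in structure from the paper's, though the analytic core is the same. The paper works pointwise on $\delta$ itself: using $|\nabla\delta|=1$ off $\hr$ and the $1$-Lipschitz bound it shows $\Lpi\delta\,(x)=-\Lmi\delta\,(x)=1$, hence $\Li\delta=0$ in $\Omega\setminus\hr$, and then simply invokes Theorem \ref{explicit}, i.e.\ identifies $\delta/R$ as the solution of the Dirichlet problem \eqref{explicitfcn} with $\Gamma_1=\Gamma$; all the bookkeeping with the $\Lmi u+\ll u$ term (away from and on the High Ridge) is thereby delegated to Theorem \ref{explicit} and Lemma \ref{Lminus}. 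You instead verify Definition \ref{infeigen} directly at the level of test functions: the supersolution inequalities with no hypothesis at all, the transition $-1+\Lambda_\infty R=0$ at ridge points, and the subsolution inequality $\Li\psi\,(x_0)\ge0$ off the ridge via $\nabla\psi(x_0)=\nabla\delta(x_0)$ being a unit vector, which is exactly the paper's directional-derivative computation transplanted from $\delta$ to the test function. What the paper's version buys is brevity, since Theorem \ref{explicit} is already available; what yours buys is self-containedness (no appeal to the Dirichlet problem or its uniqueness), an explicit pointwise-to-viscosity passage that the paper leaves implicit, and a clear identification of exactly where the differentiability hypothesis and the restriction $\alpha=1$ enter, which your closing remark explains correctly.
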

\begin{proof}
The first step is to control $\Li \delta$. Since $\delta$ is differentiable outside $\hr$, $|\nabla \delta |=1$ there. Moreover, $\delta$ is Lipschitz continuous with constant 1. Therefore with $h=\nabla \delta(x)$ for $x\not\in \hr$
$$
1\geq \sup_{y\in \R^n} \frac{\delta(y)-\delta(x)}{|y-x|}\geq \lim_{t\searrow 0}\frac{\delta(x+th)-\delta(x)}{|h|}=1
$$
and
$$
-1\leq \inf_{y\in \R^n} \frac{\delta(y)-\delta(x)}{|y-x|}\leq \lim_{t\nearrow 0}\frac{\delta(x+th)-\delta(x)}{|h|}=-1.
$$
Thus, $\Lpi \delta\,(x)=-\Lmi \delta\,(x)=1$ or equivalently $\Li \delta \,(x)=0$ for $x\not\in \hr$. The result now follows from Theorem \ref{explicit}.

\end{proof}

\section{Higher Infinity Eigenvalues} Also for the higher eigenfunctions it is possible to deduce a limiting equation as $p\to\infty$. The equation is the one for the first eigenfunction in every nodal domain together with a transition condition:
\begin{equation}\label{limeqhigh}
\left\{\begin{array}{lr} \max\left\{\mathcal{L}_{\infty}u\,(x),\,\mathcal{L}_{\infty}^{-}u\,(x)
    + \lambda u(x)\right\}\,=\,0& \textup{when $u(x)>0$}\\
    \Li u\,(x)\,=\,0& \textup{when $u(x)=0$}\\
\min\left\{\mathcal{L}_{\infty}u\,(x),\,\mathcal{L}_{\infty}^{+}u\,(x)
    + \lambda u(x)\right\}\,=\,0& \textup{when $u(x)<0$}
    \end{array}\right.
\end{equation}
The result below can be obtained by following the proof of Theorem \ref{convergence}.
\begin{thm} Let $u_p$ be a sign-changing eigenfunction with the finite
  exponent $p$. Then, upon normalizing $u_p$, there is a subsequence $u_{p_j}$ converging uniformly in $\Omega$ to a function $u\in C_0(\overline\Omega)$ which is a viscosity solution of equation \eqref{limeqhigh} for some $\lambda\geq \Lambda_{\infty}^{\alpha}(\Omega)$.
\end{thm}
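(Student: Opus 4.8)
The plan is to run the proof of Theorem~\ref{convergence} while carrying along the sign of the eigenfunction. First normalise so that $\|u_p\|_{L^p(\Omega)}=1$; using $u_p$ itself in the Euler--Lagrange equation \eqref{euler} (by density) gives $\int_{\Rn}\!\!\int_{\Rn}|u_p(y)-u_p(x)|^p|y-x|^{-\alpha p}\,dx\,dy=\lambda_p$, hence $\|u_p\|_{W^{s,p}(\Rn)}\le C(1+\lambda_p^{1/p})$ with $sp=\alpha p-n$. Pass to a subsequence $p_j\to\infty$ along which $\lambda_{p_j}^{1/p_j}$ converges, to $\lambda$ say; the statement is meaningful only when $\lambda<\infty$, which we henceforth assume, and then $\lambda\ge\ll$ because $\lambda_p\ge\lambda_1(\Omega)$ and $\lambda_1(\Omega)^{1/p}\to\ll$ by Proposition~\ref{limlambda}. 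Exactly as in Theorem~\ref{convergence}, this bound makes the H\"older norms $\|u_{p_j}\|_{C^\gamma(\Rn)}$ uniformly bounded for every $\gamma<\alpha$, so after a further extraction $u_{p_j}\to u$ uniformly, with $u\in C_0(\overline\Omega)$.

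To check that $u$ solves \eqref{limeqhigh} in the viscosity sense, let $\varphi\in C_0^1(\Rn)$ touch $u$ from below at $x_0\in\Omega$; as in Theorem~\ref{convergence} we may take the touching strict and obtain $x_j\to x_0$ and constants $c_j\to0$ such that $\varphi+c_j$ touches $u_{p_j}$ from below at $x_j$, whence $u_{p_j}(x_j)=\varphi(x_j)+c_j\to u(x_0)$. An eigenfunction is a viscosity supersolution of $\mathcal{L}_{p_j}u_{p_j}+\lambda_{p_j}|u_{p_j}|^{p_j-2}u_{p_j}=0$ (the proof that weak solutions are viscosity solutions does not use positivity), and constants do not affect $\mathcal{L}_{p_j}$ (Remark~\ref{mono}), so
$$
A_j^{p_j-1}-B_j^{p_j-1}+\lambda_{p_j}|u_{p_j}(x_j)|^{p_j-2}u_{p_j}(x_j)\le 0,
$$
with $A_j,B_j$ as in Theorem~\ref{convergence}. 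By \cite[Lemma~6.5]{CLM11}, $A_j\to\Lpi\varphi\,(x_0)$ and $B_j\to-\Lmi\varphi\,(x_0)$, while $\lambda_{p_j}^{1/(p_j-1)}|u_{p_j}(x_j)|\to\lambda|u(x_0)|$ (because $\lambda_{p_j}^{1/(p_j-1)}\to\lambda$ since $\lambda>0$ and $p_j/(p_j-1)\to1$).

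Now split according to the sign of $u(x_0)$, using only the elementary facts $(a^q+b^q)^{1/q}\to\max\{a,b\}$ and $(a+b)^q\ge a^q+b^q$ for $a,b\ge0$, $q\ge1$. If $u(x_0)>0$: $u_{p_j}(x_j)>0$ eventually, $C_j:=\lambda_{p_j}^{1/(p_j-1)}u_{p_j}(x_j)\to\lambda\varphi(x_0)$, the inequality reads $A_j^{p_j-1}+C_j^{p_j-1}\le B_j^{p_j-1}$, and discarding in turn $A_j$ or $C_j$ yields $\Li\varphi\,(x_0)\le0$ and $\Lmi\varphi\,(x_0)+\lambda\varphi(x_0)\le0$ --- the supersolution requirement on $\{u>0\}$. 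If $u(x_0)<0$: $u_{p_j}(x_j)<0$ eventually, the inequality becomes $A_j^{p_j-1}\le B_j^{p_j-1}+\widetilde C_j^{p_j-1}$ with $\widetilde C_j:=\lambda_{p_j}^{1/(p_j-1)}|u_{p_j}(x_j)|\to-\lambda\varphi(x_0)$, so in the limit $\Lpi\varphi\,(x_0)\le\max\{-\Lmi\varphi\,(x_0),-\lambda\varphi(x_0)\}$, i.e.\ $\min\{\Li\varphi\,(x_0),\Lpi\varphi\,(x_0)+\lambda\varphi(x_0)\}\le0$ --- the requirement on $\{u<0\}$. If $u(x_0)=0$: $\lambda_{p_j}^{1/(p_j-1)}|u_{p_j}(x_j)|\to0$, so the zeroth order term $E_j$ satisfies $|E_j|^{1/(p_j-1)}\to0$, and from $A_j^{p_j-1}\le B_j^{p_j-1}+|E_j|$ one gets $A_j\le B_j+|E_j|^{1/(p_j-1)}$, hence $\Li\varphi\,(x_0)\le0$ --- the transition requirement $\Li u=0$ on $\{u=0\}$. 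A test function touching $u$ from above at $x_0$ reverses every inequality, and the same three cases give $\max\{\Li\psi\,(x_0),\Lmi\psi\,(x_0)+\lambda\psi(x_0)\}\ge0$ on $\{u>0\}$, $\Li\psi\,(x_0)\ge0$ on $\{u=0\}$, and $\min\{\Li\psi\,(x_0),\Lpi\psi\,(x_0)+\lambda\psi(x_0)\}\ge0$ on $\{u<0\}$. Thus $u$ is a viscosity solution of \eqref{limeqhigh}.

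The compactness and the algebra with $(a^q+b^q)^{1/q}$ are routine given Theorem~\ref{convergence}. The one genuinely new ingredient, compared with that theorem, is the transition regime $u(x_0)=0$: one must see that the zeroth order term cannot survive in the limit, which is precisely the statement $\lambda_{p_j}^{1/(p_j-1)}|u_{p_j}(x_j)|\to0$, leaving only the homogeneous equation $\Li u=0$. The caveat worth recording is that the whole argument rests on being able to extract a subsequence along which $\lambda_{p_j}^{1/p_j}$ stays bounded; this boundedness underlies the uniform H\"older estimate and hence the uniform convergence, and for the higher eigenvalues it should be imposed as a hypothesis (or established separately) for the sequence $u_p$ under consideration.
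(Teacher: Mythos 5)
Your proof is correct and follows exactly the route the paper intends: the paper's own ``proof'' consists of the single remark that the result is obtained by following the proof of Theorem \ref{convergence}, and your three-case adaptation (the sign regimes $u(x_0)>0$, $u(x_0)=0$, $u(x_0)<0$, with the zeroth-order term eliminated via $\lambda_{p_j}^{1/(p_j-1)}|u_{p_j}(x_j)|\to 0$ in the transition case) is precisely the natural way to carry that out for sign-changing eigenfunctions. Your closing caveat, that boundedness of $\lambda_{p_j}^{1/p_j}$ along a subsequence must be assumed or established separately, is a fair remark about an implicit hypothesis that the paper's one-line argument also leaves unaddressed.
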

This leads to the following definition of higher $\infty$-eigenfunctions:
\begin{definition} We say that $u\in C_0(\overline\Omega)$ is a higher $\infty$-eigenfunction with eigenvalue $\lambda $ if $u$ is a sign-changing viscosity solution of equation \eqref{limeqhigh}.
\end{definition}
We give a list of properties that hold for higher $\infty$-eigenfunctions, which can be proved in the same manner as those for the first $\infty$-eigenfunctions:
\begin{itemize}
\item The infimum in $\mathcal{L}_{\infty}^- u$ is attained in the set $\{ u\leq 0\}$ and the supremum in $\mathcal{L}_{\infty}^+ u$ is attained in the set $\{ u\geq 0\}$. Follows from Lemma \ref{Lminus}.
\item $\Li u =0$ in the viscosity sense wherever
$$\mathcal{L}_{\infty}^-u+\lambda u<0\textup{ and } u>0$$
or
$$\mathcal{L}_{\infty}^+u+\lambda u>0\textup{ and } u<0.$$
See Proposition \ref{Lpw}. 
\item When $u>0$ then $  \mathcal{L}_{\infty}^-+\lambda u\leq 0$ in the pointwise sense, and when $u<0$, then $  \mathcal{L}_{\infty}^++\lambda u\geq 0$, also in the pointwise sense. See Proposition \ref{Lmipw}.
\end{itemize}

We change the notation now so that $R_1$ denotes the radius of the
largest inscribed ball in $\Omega$. We define $R_2=R_2(\Omega)$ as the
largest radius $R$ such that two disjoint open balls of radius $R$ can be inscribed in $\Omega$.
\begin{prop} If $u$ is a higher $\infty$-eigenfunction with eigenvalue $\lambda$ then
$$
\lambda\geq \frac{1}{R^\alpha_2}.
$$
\end{prop}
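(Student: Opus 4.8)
The plan is to exhibit two disjoint balls of radius $r:=\lambda^{-1/\alpha}$ inside $\Omega$, one in the positive nodal set $\Omega^+=\{u>0\}$ and one in the negative nodal set $\Omega^-=\{u<0\}$. Since $u$ is sign-changing, both sets are non-empty, open, disjoint, and contained in $\Omega$ (because $u=0$ outside $\Omega$), so producing such balls forces $R_2(\Omega)\ge r$, i.e. $\lambda\ge R_2^{-\alpha}$.

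For the positive side, let $M=\max_{\Rn}u>0$; it is attained at some $x_0\in\Omega^+$ since $u$ is continuous with compact support. The key step is to show that the eigenvalue branch of the equation is active at $x_0$, namely $\Lmi u\,(x_0)+\lambda u(x_0)=0$ in the pointwise sense. The inequality ``$\le 0$'' is exactly one of the pointwise properties of higher $\infty$-eigenfunctions listed above (the analogue of Proposition \ref{Lmipw}). For ``$\ge 0$'', suppose instead that $\Lmi u\,(x_0)+\lambda u(x_0)<0$; then by the analogue of Proposition \ref{Lpw} the equation $\Li u=0$ holds at $x_0$ in the viscosity sense, so in particular $u$ is a viscosity subsolution of $\Li u=0$ there. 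But the test function $\varphi=M\eta$, where $\eta\in C_0^\infty(\Rn)$, $0\le\eta\le 1$ and $\eta\equiv 1$ on a neighbourhood of $\overline\Omega$, touches $u$ from above at $x_0$, has $\Lpi\varphi\,(x_0)=0$ (the ratio vanishes for $y$ near $x_0$ and is $\le 0$ elsewhere) and $\Lmi\varphi\,(x_0)<0$ (the infimum is strictly negative because $\eta$ is compactly supported), hence $\Li\varphi\,(x_0)<0$, a contradiction. This gives the desired equality.

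Next I would invoke the higher-eigenfunction version of Lemma \ref{Lminus}: the infimum defining $\Lmi u$ is attained in $\{u\le 0\}$. Let $y_0$ be a point of the closed non-empty set $\{u\le 0\}$ nearest to $x_0$, and set $d=|x_0-y_0|=\dist(x_0,\{u\le 0\})>0$. Then
$$-\lambda M=\Lmi u\,(x_0)\le\frac{u(y_0)-u(x_0)}{|y_0-x_0|^\alpha}\le\frac{-M}{d^\alpha},$$
since $u(y_0)\le 0$, and therefore $d\ge\lambda^{-1/\alpha}=r$. Thus $B(x_0,r)$ avoids $\{u\le 0\}$, i.e. $B(x_0,r)\subset\Omega^+\subset\Omega$. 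The argument for $\Omega^-$ is the mirror image: at a point $x_1$ where $u$ attains its minimum $m<0$ one shows $\Lpi u\,(x_1)+\lambda u(x_1)=0$ (using the corresponding pointwise property, the analogue of Proposition \ref{Lpw}, and the test function $m\eta$ touching $u$ from below), and then, taking $z_1$ a nearest point of $\{u\ge 0\}$ and $d'=|x_1-z_1|$,
$$\lambda|m|=\Lpi u\,(x_1)\ge\frac{u(z_1)-u(x_1)}{|z_1-x_1|^\alpha}\ge\frac{|m|}{(d')^\alpha},$$
so $\dist(x_1,\{u\ge0\})=d'\ge r$ and $B(x_1,r)\subset\Omega^-\subset\Omega$.

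Finally, $B(x_0,r)$ and $B(x_1,r)$ lie in the disjoint sets $\{u>0\}$ and $\{u<0\}$, hence are disjoint, and both are inscribed in $\Omega$; therefore $R_2(\Omega)\ge r=\lambda^{-1/\alpha}$, which rearranges to $\lambda\ge 1/R_2^\alpha$. The step I expect to require the most care is establishing that the eigenvalue branch is active at the extrema of $u$: this is where the viscosity formulation and the dichotomy of Proposition \ref{Lpw} really enter, via the cheap but slightly delicate device of testing with a constant cut off near $\overline\Omega$. A minor additional point is that $\Omega^+$ and $\Omega^-$ may be disconnected, but none of the cited pointwise facts require connectedness, so this causes no trouble. (For $\alpha=1$ one uses the corresponding pointwise statement established at the end of the previous section, the rest of the argument being unchanged.)
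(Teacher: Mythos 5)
Your argument is correct, and its overall skeleton coincides with the paper's: locate a point of $\{u>0\}$ (resp.\ $\{u<0\}$) where the eigenvalue branch $\Lmi u+\lambda u=0$ (resp.\ $\Lpi u+\lambda u=0$) is active, use the fact that the competitor in the infimum (supremum) lies in $\{u\le 0\}$ (resp.\ $\{u\ge 0\}$) to deduce $\dist(x_0,\{u\le0\})\ge\lambda^{-1/\alpha}$, and conclude via two disjoint inscribed balls that $\lambda\ge R_2^{-\alpha}$. Where you genuinely deviate is in producing the active point: the paper argues softly that if $\Lmi u+\lambda u<0$ everywhere in $\{u>0\}$, then by the analogue of Proposition \ref{Lpw} one has $\Li u=0$ in all of $\{u>0\}$, and the comparison principle of \cite{CLM11} would force $u\le 0$, a contradiction; you instead go to the maximum point and kill the alternative $\Li u\,(x_0)=0$ there directly, testing from above with the cut-off constant $M\eta$, whose $\Lpi$ vanishes and whose $\Lmi$ is strictly negative because of the compact support. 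Your variant is more self-contained (no appeal to the comparison principle) and has the small bonus of locating the active branch at the extrema, at the price of the extra test-function construction and of having to mirror it on the negative side; the paper's version is shorter given \cite{CLM11}. Two minor economies you could make: the appeal to the higher-eigenfunction version of Lemma \ref{Lminus} is superfluous, since the nearest point of $\{u\le0\}$ is already an admissible competitor in the infimum over all of $\Rn$; and at each extremum only one inequality direction is actually used ($\Lmi u\,(x_0)+\lambda u(x_0)\ge 0$ on the positive side, $\Lpi u\,(x_1)+\lambda u(x_1)\le 0$ on the negative side), so the full equality, though true, is not needed.
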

\begin{proof} Pick $x_0\in \{u>0\}$ such that
$$
\lambda u(x_0)+\Lmi u\,(x_0)=0.
$$
Such an $x_0$ exists since otherwise, by Proposition \ref{Lpw}, $\Li u=0$ in $\{u>0\}$, which by the comparison principle in \cite{CLM11} would force $u\leq 0$.

Since $\Lmi u\,(x_0)$ is attained in $ \{u\leq 0\}$ (cf. Property 1 above)
\begin{align*}
\lambda u(x_0) &= -\Lmi u\,(x_0)\\
&=-\inf_{y\in \R^n\cap \{u\leq 0\}}\frac{u(y)-u(x_0)}{|y-x_0|^\alpha}\\
&\geq -\inf_{y\in \R^n\cap \dd\{u> 0\}}\frac{u(y)-u(x_0)}{|y-x_0|^\alpha}\\
&\geq \frac{u(x_0)}{\dist(x_0,\dd\{u\leq 0\})^\alpha}.
\end{align*}
The same can be obtained for $y_0\in \{u<0\}$ so that
$$
\lambda \geq \max\left(\sup_{x_0\in \{u>0\}}\frac{1}{\dist(x_0,\dd\{u> 0\})^\alpha},\sup_{y_0\in \{u<0\}}\frac{1}{\dist(y_0,\dd\{u< 0\})^\alpha}\right)\geq \frac{1}{R_2^\alpha}.
$$
\end{proof}

The above proposition implies that in the case when $R_2\neq R_1$ we can define the second eigenvalue as
$$
\inf\{\lambda: \lambda \textup{ is an eigenvalue of $u$}, \, u \textup{ changes signs}\}.
$$
There are simple examples of domains with $R_1 = R_2$.
If $\alpha<1$ and if there is a nodal domain compactly contained in
$\Omega$, we are able to obtain a better lower bound for the second
eigenvalue.
We encounter a strange phenomenon when $\alpha \neq 1$, viz. the
restriction of a higher $\infty$-eigenfunction to a nodal domain (and
extended as zero) is not a first $\infty$-eigenfunction with respect
to the nodal domain.
\begin{prop} Assume $u$ to be a higher $\infty$-eigenfunction with eigenvalue $\lambda$. If $N$ is a nodal domain compactly contained in the interior of $\Omega$, then $\lambda>\ll(N)$.
\end{prop}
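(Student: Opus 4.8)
The plan is to argue by contradiction, adapting to the single component $N$ the reasoning used for the equation $\max\{\Li u,\Lmi u+\lambda u\}=0$ with a nonnegative solution. We may take $N$ to be a component of $\{u>0\}$, so that $u=0$ on $\partial N$ and $N$ is bounded. Reading off from \eqref{limeqhigh} that $u$ is a viscosity supersolution of $\Li u=0$ in $\{u>0\}$, Lemma \ref{Lminus} (applied with $D=\{u>0\}$) gives, for every $x\in N$,
$$\Lmi u\,(x)=\inf_{y:\,u(y)\le 0}\frac{u(y)-u(x)}{|y-x|^\alpha}\;\le\;-\frac{u(x)}{\delta_N(x)^\alpha},\qquad \delta_N(x):=\dist(x,\Rn\setminus N),$$
since $\partial N\subset\{u=0\}$; and the pointwise inequality $\Lmi u+\lambda u\le0$, valid in $\{u>0\}$ and in particular in $N$, is at our disposal.

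The first step is to produce a point $x_0\in N$ at which the eigenvalue branch is active, that is $\Lmi u\,(x_0)+\lambda u\,(x_0)=0$. If no such point existed, then $\Lmi u+\lambda u<0$ throughout $N$, so Proposition \ref{Lpw} would give $\Li u=0$ in the viscosity sense in all of $N$; since $u=0$ on $\partial N$, $\overline N\subset\Omega$, and $u$ attains a strictly positive maximum over $\overline N$ at an interior point, this must be ruled out by the comparison/maximum principle of \cite{CLM11}, the positive maximum of an $\Li$-subsolution in $N$ being forced onto $\Rn\setminus N$, where it vanishes on $\partial N$. I expect this reduction — carried out for the isolated component $N$ rather than for all of $\{u>0\}$ at once — to be the main obstacle, and it is exactly here that the hypothesis $\overline N\subset\Omega$ must be exploited.

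Granting such an $x_0$, let $y_0\in\{u\le0\}$ realise the infimum defining $\Lmi u\,(x_0)$; then $|y_0-x_0|\ge\delta_N(x_0)$ and
$$\lambda=\frac{-\Lmi u\,(x_0)}{u(x_0)}=\frac{u(x_0)-u(y_0)}{u(x_0)\,|y_0-x_0|^\alpha}\;\ge\;\frac{1}{|y_0-x_0|^\alpha}\;\ge\;\frac{1}{\delta_N(x_0)^\alpha}\;\ge\;\frac{1}{R_N^\alpha}=\ll(N),$$
where $R_N=\max_{x\in N}\delta_N(x)$ is the inradius of $N$. If $\delta_N(x_0)<R_N$, or $|y_0-x_0|>\delta_N(x_0)$, or $u(y_0)<0$, one of these inequalities is strict and we are done. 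The remaining, degenerate case is $\delta_N(x_0)=R_N$ with $y_0\in\partial N$ a nearest boundary point and $u(y_0)=0$, so that $\Lmi u\,(x_0)=-u(x_0)/R_N^\alpha$ exactly and hence $u(y)\ge u(x_0)\bigl(1-|y-x_0|^\alpha/R_N^\alpha\bigr)$ for all $y$. To contradict this identity I would use $\alpha<1$ — via the strict triangle inequality behind Lemma \ref{kon} and a test function shaped like a truncated $\alpha$-cone centred at $x_0$ — together with the genuine sign change of $u$; this is the other delicate point. Combining the cases gives $\lambda>\ll(N)$.
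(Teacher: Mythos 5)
You follow the paper's strategy (activate the eigenvalue branch at some $x_0\in N$, bound $\lambda$ by a distance to $\dd N$, and treat the equality case with an $\alpha$-cone), but the execution has two real gaps. First, your inequality chain is broken and, more importantly, the strictness --- the whole content of the proposition --- is never actually proved. Since $y_0\in\{u\le 0\}\subset\Rn\setminus N$ you only know $|y_0-x_0|\ge\delta_N(x_0)$, hence $1/|y_0-x_0|^\alpha\le 1/\delta_N(x_0)^\alpha$: the middle inequality in your display is reversed, and your trichotomy does not repair it (if $y_0$ lies in a far-away negative nodal domain with $u(y_0)$ very negative, then $1/|y_0-x_0|^\alpha$ can be smaller than $\ll(N)$, so strictness of the first inequality yields nothing). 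The correct route, which you in fact wrote at the start, is to use the nearest point of $\dd N$ as a competitor in the infimum: $\lambda u(x_0)=-\Lmi u\,(x_0)\ge u(x_0)/\delta_N(x_0)^\alpha$, so $\lambda\ge \delta_N(x_0)^{-\alpha}\ge R_N^{-\alpha}=\ll(N)$. The paper then kills the equality case as follows: if $\lambda=\ll(N)$, both inequalities are equalities, so $u(y)\ge u(x_0)\bigl(1-|y-x_0|^\alpha/R_N^\alpha\bigr)$ for all $y$, with equality at the nearest boundary point $y_0\in\dd N$; hence, for $R$ large, $w=u(x_0)-u(x_0)R_N^{-\alpha}C_{x_0,R}$ touches $u$ from below at $y_0$, a point of the nodal line where \eqref{limeqhigh} reads $\Li u=0$, so the supersolution property gives $\Li w\,(y_0)\le 0$, while Lemma \ref{kon} gives $\Li w\,(y_0)>0$ because the cone enters with a negative coefficient --- contradiction. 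You name the right tools (truncated $\alpha$-cone, Lemma \ref{kon}, $\alpha<1$) but never identify the touching point on $\{u=0\}$ or which branch of \eqref{limeqhigh} is tested there, and you explicitly defer the step; the ``genuine sign change of $u$'' plays no role in it. As written, even after fixing the chain, you only obtain $\lambda\ge\ll(N)$.

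Second, your justification of the existence of $x_0\in N$ with $\Lmi u\,(x_0)+\lambda u(x_0)=0$ is invalid as stated. For the non-local operator, comparison requires control of $u$ on all of $\Rn\setminus N$, not merely on $\dd N$; the maximum of $u$ is indeed pushed to the complement, but there $u$ vanishes only on $\dd N$ and may be positive in other positive nodal domains, so no contradiction with an interior positive maximum arises (the solution of the Dirichlet problem \eqref{explicitfcn} is a positive solution of $\Li u=0$ taking zero data on part of the boundary of its domain, which shows your argument cannot work in this generality). The mechanism the paper refers to with ``as before'' --- if $\Lmi u+\lambda u<0$ everywhere then $\Li u=0$ by Proposition \ref{Lpw} and comparison with the zero function forces $u\le 0$ --- uses precisely that $u\le 0$ off the set in question, which is available for $\{u>0\}$ but not automatic for the single component $N$; so this reduction needs a genuine argument, and the one you give is not it.
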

\begin{proof} We can assume that $\{u>0\}$ in $N$. As before we can find $x_0\in N$ such that $\Lmi u\,(x_0)+\lambda u(x_0)=0$. Since $\Lmi u\,(x_0)$ is attained in $\{u\leq 0\}$,
\begin{align}\label{lambdaell}
\lambda \geq -\frac{\Lmi u\,(x_0)}{u(x_0)}\geq \inf_{y\in \dd N}\frac{1}{|y-x_0|^\alpha}=\frac{1}{\dist(x_0,\dd N)^\alpha}\geq \ll(N).
\end{align}
Assume now towards a contradiction that $\lambda = \ll(N)$. Then equality holds all the way in \eqref{lambdaell}, so that
$$
\Lmi u\,(x_0)=\frac{-u(x_0)}{\dist(x_0,\dd N)^\alpha},
$$
or, in other words, for all $y\in \R^n$
$$
u(y)\geq u(x_0)\left(1-\frac{|y-x_0|^\alpha}{\dist(x_0,\dd N)^\alpha}\right)
$$
with equality if $y=x_0$ or if $y=y_0$ with $|y_0-x_0|=\dist(x_0,\dd N)$. Consequently, with $R$ large enough the function
$$
w(y)=u(x_0)-\frac{u(x_0)}{\dist(x_0,\dd N)^\alpha}C_{x_0,R}(y)
$$
touches $u$ from below at $y_0$. From equation \eqref{limeqhigh}, $\Li w\,(y_0)\leq 0$. But on the other hand, Lemma \ref{kon} implies $\Li w>0$, a contradiction.
\end{proof}
\section{One Dimensional Examples}\label{sec:1d}
Certain aspects of this non-local problem differ from the situation in the eigenvalue problem \eqref{globq} for the infinity Laplacian. In the case $\alpha<1$, these differences appear explicitly in one-dimensional examples.
\subsection{The first eigenfunction}
Consider the interval $(0,2)$. Its High Ridge consists only of the
midpoint, and by Corollary \ref{constonridge} the first eigenfunction
is unique and given by the representation formula in Lemma \ref{expliciteigfcn}. In the case of the interval $(0,2)$ it reduces to
$$
u(x)=\frac{\min(|x|^\alpha,|2-x|^\alpha)}{\min(|x|^\alpha,|2-x|^\alpha)+|x-1|^\alpha}.
$$
\begin{figure}[!ht]
\begin{center}
     \includegraphics[scale=0.3]{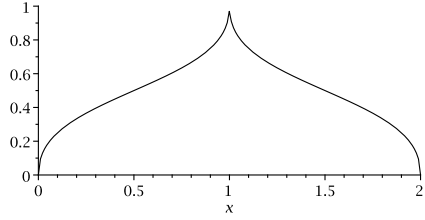}

    \caption{The first eigenfunction on $(0,2)$, for $\alpha=1/2$.}
\end{center}
\end{figure}
\subsection{The second eigenfunction}
Consider the interval $(0,2)$. Assuming that the function $u$ is anti-symmetric around the point $x=1$ one can construct a solution having two nodal domains:
\begin{figure}[!ht]
\begin{center}
     \includegraphics[scale=0.3]{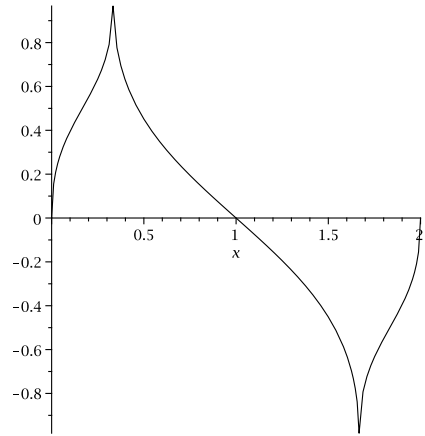}

    \caption{One eigenfunction on $(0,2)$ with two nodal domains for $\alpha=1/2$ and $\lambda=\sqrt{3}$.}
\end{center}
\end{figure}
$$
u(x)=\displaystyle\left\{ \begin{array}{ll} 
\displaystyle\frac{x^\alpha}{x^\alpha+(a-x)^\alpha}& \textup{for $x\in (0,a)$,}\\

\displaystyle\frac{(2-a-x)^\alpha-(x-a)^\alpha}{(2-a-x)^\alpha+(x-a)^\alpha}&\textup{for $x\in (a,2-a)$,}\\

\displaystyle-\frac{(2-x)^\alpha}{(2-x)^\alpha+(x-(2-a))^\alpha}& \textup{for $x\in (2-a,2)$,}
\end{array}\right.
$$
here 
$$
a=\frac{2}{2^\frac{1}{\alpha}+2},\quad \lambda=(2^{\frac{1}{\alpha}-1}+1)^\alpha
$$
and the nodal domains are the two intervals $(0,1)$ and $(1,2)$. The
maximum is at $x=a$ and the minimum at $x=2-a$.  For
$\alpha\neq 1$, one can see that $a<1/2$. The remarkable feature is that the maximum is
not attained at the midpoint of the nodal interval $(0,1)$ but to the
left. In this example $\lambda>\ll(\{u>0\})=\ll((0,1))=1$.

\subsection{A function with three nodal domains}
Consider the interval $(0,2)$. 
Assuming that the solution is symmetric around the point $x=1$, we obtain one eigenfunction with three nodal intervals:
\begin{figure}[!ht]
\begin{center}
     \includegraphics[scale=0.3]{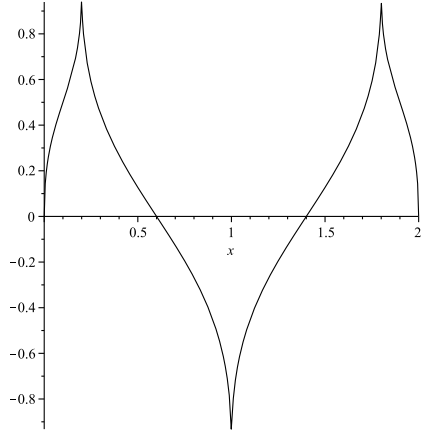}
    \caption{One eigenfunction on $(0,2)$ with three nodal domains for $\alpha=1/2$ and $\lambda=\sqrt{5}$.}
\end{center}  
\end{figure}
$$
u(x)=\displaystyle\left\{ \begin{array}{ll} 
\displaystyle\frac{x^\alpha}{x^\alpha+(a-x)^\alpha}& \textup{when $x\in (0,a)$,}\\

\displaystyle\frac{(1-x)^\alpha-(x-a)^\alpha}{(1-x)^\alpha+(x-a)^\alpha}&\textup{when $x\in (a,1)$,}\\
\end{array}\right.
$$
and $u(2-x) = u(x)$. Here
$$
a=\frac{1}{2^\frac{1}{\alpha}+1},\quad \lambda=(1+2^\frac{1}{\alpha})^\alpha
$$
and the nodal intervals are $(0,\frac{1+a}{2})$,
$(\frac{1+a}{2},\frac{3-a}{2})$ and $(\frac{3-a}{2},2)$. The
remarkable feature is that the nodal intervals do not have the same
length. The middle interval is the longest. This illustrates that nodal domains (coming from the same
eigenfunction) can have different first $\infty$-eigenvalues.

\bibliographystyle{amsrefs}
\bibliography{eigref}
\end{document}